\numberwithin{equation}{section}
\numberwithin{figure}{section}
\numberwithin{table}{section}
\theoremstyle{plain}
\newtheorem{theorem}{Theorem}[section]
\newtheorem{prop}[theorem]{Proposition}
\newtheorem{lemma}[theorem]{Lemma}
\newtheorem{question}[theorem]{Question}
\newtheorem{fact}[theorem]{Fact}
\newtheorem{thmIntro}{Theorem}
\theoremstyle{definition}
\newtheorem{definition}[theorem]{Definition}
\newtheorem{remark}[theorem]{Remark}
\newtheorem{example}[theorem]{Example}
\DeclareMathOperator{\Aut}{Aut}
\DeclareMathOperator{\Gal}{Gal}
\DeclareMathOperator{\Hom}{Hom}
\DeclareMathOperator{\Inj}{Inj}
\newcommand{\Qd}{\mathrm{Qd}}
\newcommand{\rk}{\mathrm{rk}}
\newcommand{\Mor}{\mathrm{Mor}}
\newcommand{\Dim}{\mathrm{Dim}}
\newcommand{\e}{\varepsilon}
\newcommand{\ph}{\varphi}
\newcommand{\x}{\times}
\newcommand{\ox}{\otimes}
\renewcommand{\tilde}{\widetilde}
\renewcommand{\bar}{\overline}
\newcommand{\lc}[1]{\prescript{#1\!}{}}
\newcommand{\cl}{c}
\newcommand{\xto}{\xrightarrow}
\newcommand{\into}{\hookrightarrow}
\def\cA{\mathcal A}\def\cC{\mathcal C}\def\cD{\mathcal D}
\def\cF{\mathcal F}\def\cH{\mathcal H}
\def\cP{\mathcal P}
\def\CC{\mathbb C}
\def\FF{\mathbb F}
\def\KK{\mathbb K}\def\LL{\mathbb L}
\def\QQ{\mathbb Q}\def\RR{\mathbb R}
\def\ZZ{\mathbb Z}
\DeclareMathOperator{\res}{res}
\DeclareMathOperator{\tr}{tr}
\DeclareMathOperator{\ind}{ind}
\DeclareMathOperator{\id}{id}
\DeclareMathOperator{\incl}{incl}
\newcommand{\bifree}{{\mathord{\text{bifree}}}}
\DeclarePairedDelimiter{\abs}\lvert\rvert
\DeclarePairedDelimiter{\gen}\langle\rangle
\renewcommand{\theenumi}{$(\roman{enumi})$}\renewcommand{\labelenumi}{\theenumi}
\tikzset{dot/.style={circle,fill=black,thick,inner sep=0pt,minimum size=1mm,draw}}
\tikzset{arrow/.style={semithick,>=stealth',shorten >=1pt,shorten <=1pt}}
\tikzset{equal/.style={arrow,double distance=2pt}}
\title{Representation rings for fusion systems and dimension functions}
\author[S. P. Reeh]{Sune Precht Reeh}
\address{Department of Mathematics,  Massachusetts Institute of Technology, Cambridge, MA, USA}
\email{reeh@mit.edu}
\author[E. Yal\c{c}{\i}n]{Erg\"un Yal\c{c}{\i}n}
\address{Department of Mathematics, Bilkent University,
 06800 Bilkent, Ankara, Turkey}
\email{yalcine@fen.bilkent.edu.tr }
\subjclass[2010]{}
\thanks{The first author is supported by the Danish Council for Independent Research's Sapere Aude program (DFF--4002-00224). The second author is  partially supported by the Scientific and Technological Research Council of Turkey (T\" UB\. ITAK)
through the research program B\. IDEB-2219.}
\begin{document}

\begin{abstract} We define the representation ring of a saturated fusion system $\cF$ as the Grothendieck ring of the semiring of $\cF$-stable representations, and study the dimension functions of $\cF$-stable representations using the transfer map induced by the characteristic idempotent of $\cF$. We find a list of conditions for an $\cF$-stable super class function to be realized as the dimension function of an $\cF$-stable virtual representation. We also give an application of our results to constructions of finite group actions on homotopy spheres.
\end{abstract}

\maketitle

\section{Introduction}

Let $G$ be a finite group and $\KK$ be a subfield of the complex numbers. The representation ring $R_{\KK} (G)$  is defined as the Grothendieck ring  of the semiring of the isomorphism classes of finite dimensional $G$-representations over $\KK$. The addition is given by direct sum and the multiplication is given by tensor product over $\KK$. The elements of the representation ring can be taken as virtual representations $U-V$ up to isomorphism.

The dimension function associated to a $G$-representation $V$ over $\KK$ is a function
\[\Dim V\colon \mathrm{Sub} (G) \to \ZZ,\]
from subgroups of $G$ to integers, defined by $(\Dim V) (H)=\dim _{\KK } V^H$ for every $H \leq G$. Extending the dimension function linearly, we obtain a group homomorphism
\[\Dim\colon R_{\KK} (G)\to C(G),\]
where $C(G)$ denotes the group of super class functions, i.e. functions $f\colon\mathrm{Sub} (G) \to \ZZ$ that are constant on conjugacy classes.
When $\KK=\RR$, the real numbers, and $G$ is nilpotent, the image of the $\Dim$ homomorphism is equal to the group of Borel-Smith functions $C_b (G)$. This is the subgroup of $C(G)$ formed by super class functions satisfying certain conditions known as Borel-Smith conditions (see Definition \ref{def:BorelSmith} and Theorem \ref{thm:nilpotent}).

If we restrict a $G$-representation to a Sylow $p$-subgroup $S$ of $G$, we obtain an $S$-representation which respects fusion in $G$, i.e. character values for $G$-conjugate elements of $S$ are equal. Similarly the restriction of a Borel-Smith function $f \in C_b(G)$ to a Sylow $p$-group $S$ gives a Borel-Smith function $f \in C_b (S)$ which is constant on $G$-conjugacy classes of subgroups of $S$. The question we would like to answer is the following:

\begin{question}\label{ques:main} Given a Borel-Smith function $f \in C_b (S)$ that is constant on $G$-conjugacy classes of subgroups of $S$, under what conditions can we find a real $S$-representation $V$ such that $V$ respects fusion in $G$ and $\Dim V=f$?
\end{question}

To study this problem we introduce representation rings for abstract fusion systems and study the dimension homomorphism for fusion systems.  Let $\cF$ be a (saturated) fusion system on a finite $p$-group $S$ (see Section \ref{sectBisetFusion} for a definition). We define the representation ring $R_{\KK} (\cF)$ as the subring of $R_{\KK } (S)$ formed by virtual representations that are $\cF$-stable. A (virtual) $S$-representation $V$ is said to be $\cF$-stable if for every morphism $\varphi\colon P \to S$ in the fusion system $\cF$, we have
\[\res ^S _P V= \res _{\varphi}V\]
where $\res _{\varphi} V$ is the $P$-representation with the left $P$-action given by $p\cdot v=\varphi (p)v$ for every $v \in V$.

We prove that $R_{\KK} (\cF)$ is equal to the Grothendieck ring of the semiring of $\cF$-stable $S$-representations over $\KK$ (Proposition \ref{propVirtualFstableReps}). This allows us to define the dimension homomorphism
\[\Dim\colon R_{\KK } (\cF) \to C(\cF)\]
as the restriction of the dimension homomorphism for $S$. Here $C(\cF)$ denotes the group of super class functions $f\colon \mathrm{Sub(S)} \to \ZZ$ that are constant on $\cF$-conjugacy classes of subgroups of $S$. In a similar way, we define $C_b(\cF)$ as the group of Borel-Smith functions which are constant on $\cF$-conjugacy classes of subgroups of $S$.

Our first observation is that after localizing at $p$, the dimension homomorphism for fusion systems
\[\Dim\colon R_{\RR} (\cF) _{(p)}\to C_b (\cF)_{(p)}\]
is surjective (see Proposition \ref{p-localSurj}). This follows from a more general result on short exact sequences of biset functors (see Proposition \ref{propShortExactBisetFunctors}).

To obtain a surjectivity result for the dimension homomorphism in integer coefficients, we consider a result due to S. Bauer \cite{Bauer} for realization of Borel-Smith functions defined on subgroups of prime power order in a finite group $G$. In his paper Bauer introduces a new condition in addition to the Borel-Smith conditions. We introduce a similar condition for fusion systems (see Definition \ref{def:FusionCondition}). The group of Borel-Smith functions for $\cF$ which satisfy this extra condition is denoted by $C_{ba} (\cF)$. We prove the following theorem which is one of the main results of this paper.

\begin{thmIntro}\label{thm:IntroMain} Let $\cF$ be a saturated fusion system on a $p$-group $S$. Then
\[\Dim \colon R_{\RR} (\cF ) \to C_{ba} (\cF)\]
is surjective.
\end{thmIntro}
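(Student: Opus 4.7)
The plan is to reduce to Bauer's original theorem \cite{Bauer} for a finite group by realizing $\cF$ as the Sylow fusion system $\cF_S(G)$ of a finite group $G$ with $S\in \Syl_p(G)$. Such a realization exists by a theorem of Park (or, in greater generality, of Leary--Stancu and Robinson), so every saturated fusion system is the Sylow fusion system of some finite group. Throughout, I will then compare Bauer's condition on $G$, defined via normalizers and overgroups inside $G$, with the fusion condition (Definition \ref{def:FusionCondition}) inside $\cF$.

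Given $f\in C_{ba}(\cF)$, the second step is to build an extension $\tilde{f}$ of $f$ to a super class function on the poset of $p$-subgroups of $G$. For any $p$-subgroup $P\leq G$ choose $g\in G$ with ${}^gP\leq S$ (possible by Sylow), and set $\tilde{f}(P):=f({}^gP)$. Any two such subconjugations differ by an element of $N_G(S,S)$ inducing an $\cF$-morphism between the two targets in $S$, so $\cF$-stability of $f$ makes $\tilde{f}$ well defined and constant on $G$-conjugacy classes of $p$-subgroups of $G$.

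Third, one must verify that $\tilde{f}$ lies in the Bauer subgroup $C_{ba}(G)$ of super class functions on $p$-subgroups of $G$. The Borel--Smith conditions on $G$ reduce to Borel--Smith conditions on $S$ because the relevant overgroup/normalizer data for a subgroup $P\leq S$ is captured by $\cF$-morphisms into $S$, and $f$ already satisfies the Borel--Smith conditions $\cF$-stably. The same translation shows that Bauer's extra condition on $G$ follows from the fusion Bauer condition on $f$: this is precisely the purpose for which Definition \ref{def:FusionCondition} has been engineered. Once $\tilde{f}\in C_{ba}(G)$ is established, Bauer's theorem produces a virtual real $G$-representation $W$ with $\Dim W=\tilde{f}$ on the $p$-subgroups of $G$. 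Setting $V:=\res^G_S W$, the representation $V$ is automatically $\cF_S(G)=\cF$-stable (restrictions from $G$ respect $G$-fusion, hence $\cF$-morphisms), so $V\in R_{\RR}(\cF)$, and for every $P\leq S$ we have $(\Dim V)(P)=\dim_{\RR} W^P=\tilde{f}(P)=f(P)$, giving $\Dim V=f$.

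The principal obstacle is the passage in step three from the fusion-theoretic data to the group-theoretic data: Bauer's condition on $G$ couples values of $\tilde{f}$ at subgroups with arithmetic constraints coming from normalizers in $G$, whereas $f$ only knows about $\cF$-morphisms in $S$. Because all relevant sub- and superconjugations between $p$-subgroups of $G$ can, after $G$-conjugation into $S$, be represented by $\cF$-morphisms in a saturated fusion system, this translation ultimately works, but verifying it cleanly is where the real work lies and dictates exactly how Definition \ref{def:FusionCondition} must be formulated.
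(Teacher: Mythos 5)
Your proposal rests on a false premise in step one, and the paper itself flags this exact pitfall in the remark immediately following the proof of Theorem \ref{thm:main}. Park's theorem (and likewise Leary--Stancu and Robinson) produces a finite group $\Gamma$ with $S\leq\Gamma$ and $\cF_S(\Gamma)=\cF$, but $S$ is \emph{not} a Sylow $p$-subgroup of $\Gamma$ in general. Indeed, if every saturated fusion system were the Sylow fusion system of a finite group, there would be no exotic fusion systems, contradicting known examples (Ruiz--Viruel, Solomon, etc.). The paper even gives an explicit instance: for $\cF=\cF_S(A_4)$ on $S=C_2\times C_2$, Park's group is $\Gamma=(S\times S\times S)\rtimes\Sigma_3$, whose Sylow $2$-subgroup is $T=(S\times S\times S)\rtimes C_2$, strictly bigger than $S$.

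With $S$ not Sylow in $\Gamma$, step two fails: not every $p$-subgroup of $\Gamma$ is $\Gamma$-conjugate into $S$, so your recipe $\tilde f(P):=f({}^gP)$ does not define a function on all $p$-subgroups of $\Gamma$. Even if one tried to extend $f$ by fiat to the remaining $p$-subgroups (say by zero, or by $f(1)$), there is no reason the extension would satisfy the Borel--Smith conditions on the Sylow $p$-subgroup of $\Gamma$; the paper's example notes exactly this obstruction when $f(S)\neq 0$. The actual proof in the paper avoids applying Bauer's theorem wholesale. Instead it runs Bauer's inductive argument over families of cyclic subgroups of $S$: for the prime $p$ it invokes the $p$-local surjectivity (Proposition \ref{p-localSurj}), and for each prime $q\neq p$ it only borrows Bauer's local construction of a single correction representation $x_q$, verifying Bauer's arithmetic hypothesis at the relevant cyclic subgroup by passing through $\Gamma$ and the fusion Bauer condition \ref{def:ArtinCondition}. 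It then combines the $x_q$ by a B\'ezout argument. So your overall instinct --- that Definition \ref{def:FusionCondition} is engineered to feed into Bauer's condition inside $\Gamma$ --- is correct, but the reduction has to be done one prime and one cyclic subgroup at a time, not by a global extension of $f$ to $\Gamma$.
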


We prove this theorem in Section \ref{sect:Bauer} as Theorem \ref{thm:main}.
If instead of virtual representations we wish to find an actual representation realizing a given $\cF$-stable super class function, we first observe that such a function must be \emph{monotone}, meaning that for every $K\leq H \leq S$, we must have $f(K) \geq f(H) \geq 0$. It is an interesting question if every monotone super class function $f \in C_{ba} (\cF)$ is realized as the dimension function of an actual $\cF$-stable $S$-representation. We answer this question up to multiplication with a positive integer.

\begin{thmIntro}\label{thm:IntroMonotone} Let $\cF$ be a saturated fusion system on a $p$-group $S$. For every monotone Borel-Smith function $f \in C_b(\cF)$, there exists an integer $N \geq 1$ and an $\cF$-stable rational $S$-representation $V$ such that $\Dim V=Nf$.
\end{thmIntro}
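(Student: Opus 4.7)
The plan is to realize $Nf$ as the dimension function of an actual $\cF$-stable rational permutation representation $V=\QQ[X]$, where $X$ is an actual $\cF$-stable $S$-set. Since $\dim_{\QQ}\QQ[X]^H = |X^H|$ for any $S$-set $X$, it suffices to construct an actual $\cF$-stable $S$-set $X$ whose marks function $H\mapsto |X^H|$ equals an integer multiple of $f$. I would do this in two stages: first realize a multiple $N_1 f$ as the marks of an actual (but not necessarily $\cF$-stable) $S$-set, then apply the characteristic biset of $\cF$ to convert it to an $\cF$-stable $S$-set while preserving actualness.

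For the first stage, I would appeal to the classical theory of the Burnside ring of a $p$-group to produce an actual $S$-set $Y_0 \in A(S)^{+}$ with $|Y_0^H| = N_1 f(H)$ for some $N_1 \geq 1$. The marks homomorphism $\phi\colon A(S)\to C(S)$ has image of finite index, so some integer multiple of $f$ already lies in its image. The hypothesis that $f$ is monotone (and not just Borel--Smith) is then used to ensure that this multiple can be realized by an \emph{actual}, rather than virtual, $S$-set, by controlling the signs of the coefficients appearing in the M\"obius inversion on the subgroup lattice of $S$.

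For the second stage, let $\Omega$ denote a characteristic $(S,S)$-biset of $\cF$, which is an actual biset (constructed by Broto--Levi--Oliver, also by Ragnarsson). Recall that $\Omega$ is bifree and right-$\cF$-stable, and that $k := |\Omega|/|S|$ is coprime to $p$. Set $X := \Omega \times_S Y_0$; since $\Omega$ is actual and right-$\cF$-stable, $X$ is an actual $\cF$-stable $S$-set. The characteristic idempotent $\omega_\cF = \Omega/k \in A(S,S)_{(p)}$ acts as the identity on $\cF$-stable super class functions, so $\Omega$ acts on the $\cF$-stable function $N_1 f$ by multiplication by $k$, and the compatibility of marks with the biset action yields
\[
|X^H| = kN_1 f(H)
\]
for every $H \leq S$. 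Consequently $V := \QQ[X]$ is an actual $\cF$-stable rational $S$-representation with $\Dim V = Nf$ for $N := kN_1$.

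The main obstacle is the first stage. While the rational surjectivity of $\phi$ is standard, extracting an actual (not merely virtual) $S$-set $Y_0$ from a monotone Borel--Smith function is more delicate; one would need either to invoke a classical realization theorem for $p$-groups or to argue directly with the M\"obius coefficients of $Y_0 = \sum_{[H]} c_H [S/H]$, using the monotonicity of $f$ in an essential way to ensure that the coefficients $c_H$ are nonnegative after multiplication by a suitable integer.
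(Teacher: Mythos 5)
Your proposal rests on the identity $\dim_{\QQ}\QQ[X]^H = |X^H|$, which is false: the $H$-fixed subspace of $\QQ[X]$ is spanned by the $H$-orbit sums, so $\dim_{\QQ}\QQ[X]^H = |X/H|$ (the number of $H$-orbits in $X$), not the number of $H$-fixed points. For instance, $S$ acting on itself by left translation gives $|X^S|=0$ but $\dim_\QQ\QQ[X]^S=1$. The paper's remark following the definition of the biset structure on $C(-)$ warns against exactly this confusion: the mark homomorphism $\Phi(X)(H)=|X^H|$ is \emph{not} a natural transformation of biset functors for the biset structure relevant to dimension functions, so the "compatibility of marks with the biset action" that you invoke does not hold, and the claimed equality $|X^H|=kN_1f(H)$ does not follow. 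There is a second error in the same stage: $\omega_\cF$ is not $\Omega/k$; the characteristic idempotent is a $\ZZ_{(p)}$-linear combination of transitive bisets that is not in general a scalar multiple of the minimal characteristic biset $\Omega_\cF$. Finally, even after replacing $|X^H|$ by $|X/H|$ throughout, your first stage asks to realize (a multiple of) $f$ as the dimension function of an \emph{actual} permutation module; Ritter--Segal only gives virtual permutation modules, and the Möbius-inversion sketch does not establish that the coefficients become nonnegative after scaling, so that stage remains a genuine gap.

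The paper's proof proceeds quite differently and sidesteps all of this. It applies the Dotzel--Hamrick theorem to obtain a real $S$-representation $V$ with $\Dim V=f$, uses the Galois transfer $\tr(\chi)=\sum_{\sigma\in\Gal(\LL/\QQ)}{}^{\sigma}\chi$ together with the finite index of $R_\QQ(S)$ in $\bar{R_\QQ(S)}$ to produce a rational representation $W$ with $\Dim W=Nf$, and then concludes that $W$ is $\cF$-stable \emph{automatically}: since $\Dim$ is injective on rational representations (Lemma~\ref{lem:detection}, a consequence of rational representations being detected on cyclic subgroups), and $\Dim\res_\varphi W = N\res_\varphi f = N\res_Q f = \Dim\res_Q W$ because $f$ is $\cF$-stable, it follows that $\res_\varphi W\cong\res_Q W$ for every $\varphi\in\cF(Q,S)$. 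That final step--an $\cF$-stable dimension function forces $\cF$-stability of a rational representation--is the key idea your proposal is missing, and it removes any need to apply a characteristic biset.
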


This theorem is proved as Theorem \ref{thm:monotone} in the paper. From the proof of this theorem we observe that it is possible to choose the integer $N$ independent from the super class function $f$. We also note that it is enough that the function $f$ only satisfies the condition \ref{def:BorelSmith2} of the Borel-Smith conditions given in Definition \ref{def:BorelSmith} for the conclusion of the theorem to hold.

In the rest of the paper we give some applications of our results to constructing finite group actions on finite homotopy spheres. Note that if $X$ is a finite-dimensional $G$-CW-complex which is homotopy equivalent to a sphere, then by Smith theory, for each $p$-group $H \leq G$, the fixed point subspace $X^H$ has mod-$p$ homology of a sphere $S^{\underline{n} (H)}$. We define the dimension function of $X$ to be the super class function $\Dim_{\cP} X \colon \cP \to \ZZ$ such that $(\Dim _{\cP} X) (H)=\underline{n} (H)+1$ for every $p$-subgroup $H \leq G$, over all primes dividing the order of $G$. We prove the following theorem.

\begin{thmIntro}\label{thm:IntromainApp} Let $G$ be a finite group, and let $f\colon \cP \to \ZZ$ be a monotone Borel-Smith function. Then there is an integer $N\geq 1$ and a finite $G$-CW-complex $X \simeq S^n$, with prime power isotropy, such that $\Dim_{\cP} X=Nf$.
\end{thmIntro}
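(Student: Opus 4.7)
The strategy is to reduce to the per-prime fusion-theoretic statement of Theorem~\ref{thm:IntroMonotone} and then reassemble the pieces into a single $G$-CW-complex via an equivariant join.

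For each prime $p$ dividing $|G|$, fix a Sylow $p$-subgroup $S_p \leq G$ and let $\cF_p := \cF_{S_p}(G)$ be the induced saturated fusion system on $S_p$. Since $f$ is constant on $G$-conjugacy classes of subgroups, its restriction $f|_{\mathrm{Sub}(S_p)}$ lies in $C_b(\cF_p)$ and is monotone. Applying Theorem~\ref{thm:IntroMonotone} to $\cF_p$ produces an integer $M_p \geq 1$ and an $\cF_p$-stable rational $S_p$-representation $W_p$ with $\Dim W_p = M_p \cdot f|_{\mathrm{Sub}(S_p)}$. After replacing each $W_p$ by a direct sum of copies of itself, we may assume a uniform scalar $M$ works for every prime.

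Next, for each $p$ we convert $W_p$ into a finite $G$-CW-complex $Y_p$, homotopy equivalent to a sphere, all of whose isotropy subgroups are $p$-subgroups of $G$ (so $Y_p^H = \emptyset$ whenever $H \leq G$ is not a $p$-subgroup), and satisfying $\Dim_\cP Y_p(H) = M f(H)$ for every nontrivial $p$-subgroup $H$. The $\cF_p$-stability of $W_p$ is precisely the compatibility required to globalize the $S_p$-equivariant sphere $S(W_p)$ to a $G$-equivariant one: using the characteristic $(G, S_p)$-biset of $\cF_p$ one realizes $W_p$, up to a $p'$-multiple, as the $S_p$-restriction of a virtual $G$-representation, and then an equivariant surgery (or a homotopy colimit over the orbit category of $p$-subgroups of $G$) upgrades its unit sphere to a $G$-CW-complex with the desired $p$-power isotropy while preserving the fixed-point dimensions on $p$-subgroups.

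Finally, take the iterated $G$-equivariant join $X := Y_{p_1} * Y_{p_2} * \cdots * Y_{p_r}$ over the primes $p_1, \ldots, p_r$ dividing $|G|$. Fixed points satisfy $X^H = Y_{p_1}^H * \cdots * Y_{p_r}^H$, and under the topological convention $A * \emptyset = A$ (i.e., $\emptyset = S^{-1}$) we obtain $X^H = \emptyset$ precisely when $H$ is not of prime-power order, so $X$ has prime-power isotropy. Since $\Dim_\cP$ is additive under the join, $\Dim_\cP X(H) = \sum_p \Dim_\cP Y_p(H)$; for a nontrivial prime-power subgroup exactly one summand contributes, yielding $Mf(H)$, while the value at $H = 1$ is balanced to $Nf(1)$ by a suitable choice of the $Y_p$ (e.g.\ by iterated self-joins), so that $\Dim_\cP X = Nf$ for an appropriate integer $N$. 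The main obstacle is the middle step---converting an $\cF_p$-stable $S_p$-representation into a $G$-CW-complex with $p$-power isotropy and prescribed fixed-point dimensions; the fusion stability is exactly the data that permits this local-to-global extension, but the concrete implementation (via the characteristic biset and equivariant surgery, or via the orbit-category homotopy colimit) is the technical heart of the argument, while the bookkeeping in the assembly step is secondary.
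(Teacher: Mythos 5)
Your first step matches the paper exactly: apply Theorem~\ref{thm:IntroMonotone} at each prime and uniformize the scalars. But from there the routes diverge, and two things go wrong in your proposal.

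First, the ``technical heart'' you acknowledge---converting the $\cF_p$-stable representation $W_p$ into a finite $G$-CW-complex $Y_p$ with $p$-power isotropy and prescribed fixed-point dimensions---is not a secondary step that can be handled by gesturing at equivariant surgery or an orbit-category homotopy colimit. It is the content of the Hambleton--Yal\c{c}{\i}n gluing theorem (quoted as Theorem~\ref{thm:const} in the paper, from \cite{HambletonYalcinRank1Prime}*{Theorem B}), which the paper imports as a black box. Crucially, that theorem does not produce a separate $G$-sphere per prime; it takes the \emph{whole} $G$-invariant family $\{W_p\}_{p \mid |G|}$ (with all the $\dim W_p$ equal) and outputs \emph{one} finite $G$-CW-complex $X$ with prime-power isotropy whose local behavior at each prime matches $S(W_p^{\oplus k})$. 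Your proposal instead assumes a stronger per-prime output (a single $G$-sphere $Y_p$ with all isotropy $p$-subgroups), which is not what the available machinery gives, and which you would still have to prove.

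Second, even granting the existence of such $Y_p$, the join assembly fails a dimension count at the trivial subgroup. Dimension functions are additive under join everywhere, including at $H = 1$: if $Y_p \simeq S^{n_p}$ then $\Dim_\cP(Y_{p_1} * \cdots * Y_{p_r})(1) = \sum_i (n_{p_i} + 1) = \sum_i \Dim_\cP Y_{p_i}(1)$. If each $Y_p$ is built from a representation of dimension $Nf(1)$ (which it must be, since $\Dim_\cP Y_p$ is supposed to agree with $Nf$ on all $p$-subgroups including $1$), then $\Dim_\cP X(1) = rNf(1)$ where $r$ is the number of primes dividing $|G|$, not $Nf(1)$. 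Iterated self-joins scale every value of $\Dim_\cP Y_p$ by the same factor, so they cannot fix this: you would need $c_p = N$ for every $p$ \emph{and} $\sum_p c_p = N$, forcing $r = 1$. This is precisely why the gluing theorem is formulated to produce a single complex from the entire family at once---it makes the value at $1$ come out right automatically, since there is only one ambient sphere dimension to account for. The paper's proof simply feeds the family $\{W_p\}$ into Theorem~\ref{thm:const} and reads off $\Dim_\cP X = 2kN'f$; there is no join step and no per-prime sphere.
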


This theorem is proved as Theorem \ref{thm:mainApp} in the paper. Up to multiplying with an integer, Theorem \ref{thm:IntromainApp} answers the question of when a monotone Borel-Smith function defined on subgroups of prime power order can be realized. We believe this is a useful result for constructing finite homotopy $G$-spheres with certain restrictions on isotropy subgroups (for example rank restrictions). We also prove a similar result (Theorem \ref{thm:AlgMainApp}) on algebraic homotopy $G$-spheres providing a partial answer to a question of Grodal and Smith \cite{GrodalAbs}.

The paper is organized as follows: In Section \ref{sectBisetFusion} we introduce basic definitions of biset functors and fusion systems, and prove Proposition \ref{propShortExactBisetFunctors} which is one of the key results for the rest of the paper. In Section \ref{sectRepresentations}, we introduce representation rings and dimension homomorphisms for fusion systems. In Section \ref{sect:BorelSmith}, we define Borel-Smith functions and prove Proposition \ref{p-localSurj} which states that the dimension homomorphism is surjective in $p$-local coefficients. In Section \ref{sect:Bauer}, we consider a theorem of Bauer for finite groups and prove Theorem A using Bauer's theorem and its proof.
Theorem B is proved in Section \ref{sect:monotone} using some results on rational representation rings for finite groups. In the last section, Section \ref{sect:applications}, we give some applications of our results to constructions of finite group actions on homotopy spheres. Throughout the paper we assume all the fusion systems are saturated unless otherwise is stated clearly.

\subsection*{Acknowledgement}  We thank Matthew Gelvin, Jesper Grodal, and Bob Oliver for many helpful comments on the first version of the paper. Most of the work in this paper was carried out in May-June 2015 when the authors were visiting McMaster University. Both authors thank Ian Hambleton for hosting them at McMaster University. The second author would like to thank McMaster University for the support provided by a H.~L.~Hooker Visiting Fellowship.

\section{Biset functors and fusion systems}
\label{sectBisetFusion}

Let $P$ and $Q$ be finite groups. A (finite) $(P,Q)$-set is a finite set $X$ equipped with a left action of $P$ and a right action of $Q$, and such that the two actions commute. The isomorphism classes of $(P,Q)$-bisets form a monoid with disjoint union as addition, and we denote the group completion of this monoid by $A(P,Q)$. The group $A(P,Q)$ is \emph{the Burnside biset module} for $P$ and $Q$, and it consists of \emph{virtual $(P,Q)$-bisets} $X-Y$ where $X,Y$ are actual $(P,Q)$-bisets (up to isomorphism).

The biset module $A(P,Q)$ is a free abelian group generated by the transitive $(P,Q)$-bisets $(P\x Q)/D$ where the subgroup $D\leq P\x Q$ is determined up to conjugation in $P\x Q$. A special class of transitive $(P,Q)$-bisets are the ones where the right $Q$-action is free. These $Q$-free basis elements are determined by a subgroup $R\leq P$ and a group homomorphism $\ph\colon R\to Q$, and the corresponding transitive biset is denoted
\[[R,\ph]_P^Q := (P\x Q)/(pr,q)\sim(p,\ph(r)q) \text{ for $p\in P$, $q\in Q$ and $r\in R$.}\]
We write $[R,\ph]$ whenever the groups $Q,P$ are clear from the context.

The biset modules form a category $\cA$ whose objects are finite groups, and the morphisms are given by $\Hom _{\cA} (P, Q)=A(P,Q)$ with the associative composition $\circ\colon A(Q,R)\x A(P,Q) \to A(P,R)$ defined by
\[Y\circ X := X \x_Q Y\in A(P,R)\]
for bisets $X\in A(P,Q)$ and $Y\in A(Q,R)$, and then extended bilinearly to all virtual bisets. Note that this category is the opposite category of \emph{the biset category} for finite groups defined in \cite{Bouc}*{Definition 3.1.1}. Given any commutative   ring $R$ with identity, we also have an $R$-linear category $R\cA$ with morphism sets $\Mor_{R\cA}(G,H) = R\ox A(G,H)$.

\begin{definition}
Let $\cC$ be a collection of finite groups closed under taking subgroups and quotients, and let $R$ be a commutative ring with identity ($R=\ZZ$ unless specified otherwise).
A \emph{biset functor} on $\cC$ and over $R$ is an $R$-linear \emph{contravariant} functor from $R\cA_{\cC}$  to $R\text{-mod}$. Here $R\cA_\cC$ is the full subcategory of the biset category $R\cA$ restricted to groups in $\cC$, i.e. the set of morphisms between $P,Q\in \cC$ is the Burnside biset module $R\ox A(P,Q)$. In particular a biset functor has restrictions, inductions, isomorphisms, inflations and deflations between the groups in $\cC$.

A \emph{global Mackey functor} on $\cC$ and over $R$ is an $R$-linear contravariant functor from $R\cA^\bifree_\cC$ to $R\text{-mod}$, where $R\cA^\bifree_\cC$ has just the \emph{bifree} (virtual) bisets for groups in $\cC$, i.e. for the morphism sets $R\ox A^{\bifree}(P,Q)$ with $P,Q\in \cC$. A global Mackey functor has restrictions, inductions and isomorphisms between the groups in $\cC$.

For $Q\leq P$ in $\cC$ and a given biset functor/global Mackey functor $M$, we use the following notation for restriction and induction maps:
\[\res_Q^P\colon M(P)\xto{[Q,\incl]_Q^P} M(Q), \qquad  \ind_Q^P \colon M(Q)\xto{[Q,\id]_P^Q} M(P).\]
Similarly for a homomorphism $\ph\colon R\to P$, we denote the restriction along $\ph$ by
\[\res_\ph\colon M(P)\xto{[R,\ph]_R^P} M(R).\]

By a \emph{biset functor/global Mackey functor defined on a $p$-group $S$}, we mean a biset functor/global Mackey functor on some collection $\cC$ containing $S$.
\end{definition}

A fusion system is an algebraic structure that emulates the $p$-structure of a finite group: We take a finite $p$-group $S$ to play the role of a Sylow $p$-subgroup and endow $S$ with additional conjugation structure. To be precise \emph{a fusion system on $S$} is a category $\cF$ where the objects are the subgroups $P\leq S$ and the morphisms $\cF(P,Q)$ satisfy two properties:
\begin{itemize}
\item For all $P,Q\leq S$ we have $\Hom_S(P,Q)\subseteq \cF(P,Q) \subseteq \Inj(P,Q)$, where $\Inj(P,Q)$ are the injective group homomorphisms and $\Hom_S(P,Q)$ are all homomorphisms $P\to Q$ induced by conjugation with elements of $S$.
\item Every morphism $\ph\in \cF(P,Q)$ can be factored as $P\xto{\ph} \ph P\into Q$ in $\cF$, and the inverse homomorphism $\ph^{-1}\colon \ph P\to P$ is also in $\cF$.
\end{itemize}
We think of the morphisms in $\cF$ as conjugation maps to the point that we say that subgroups of $S$ or elements in $S$ are \emph{$\cF$-conjugate} whenever they are connected by an isomorphism in $\cF$.

There are many examples of fusion systems: Whenever $S$ is a $p$-subgroup in an ambient group $G$, we get a fusion system $\cF_S(G)$ by defining $\Hom_{\cF_S(G)}(P,Q) := \Hom_G(P,Q)$ as the set of all homomorphisms $P\to Q$ induced by conjugation with elements of $G$.
To capture the ``nice'' fusion systems that emulate the case when $S$ is Sylow in $G$, we need two additional axioms. These axioms require a couple of additional notions: We say that a subgroup $P\leq S$ is \emph{fully $\cF$-centralized} if $\abs{C_S(P)}\geq \abs{C_S(P')}$ whenever $P$ and $P'$ are conjugate in $\cF$. Similarly, $P\leq S$ is \emph{fully $\cF$-normalized} if $\abs{N_S(P)}\geq \abs{N_S(P')}$ when $P,P'$ are conjugate in $\cF$.

We say that a fusion system $\cF$ on a $p$-group $S$ is \emph{saturated} if it has the following properties:
\begin{itemize}
\item If $P\leq S$ is fully $\cF$-normalized, then $P$ is also fully $\cF$-centralized and $\Aut_S(P)$ is a Sylow $p$-subgroup of $\Aut_\cF(P)$.
\item If $\ph\in \cF(P,S)$ is such that the image $\ph P$ is fully $\cF$-centralized, then $\ph$ extends to a map $\tilde \ph\in \cF(N_\ph,S)$ where
    \[N_\ph := \{x\in N_S(P) \mid \exists y\in N_S(\ph P)\colon \ph\circ c_x = c_y\circ \ph \in \cF(P,S)\}.\]
\end{itemize}
The main examples of saturated fusion systems are $\cF_S(G)$ whenever $S$ is a Sylow $p$-subgroup of $G$, however, other \emph{exotic} saturated fusion systems exist.

For a bisets functor or global Mackey functor $M$ that is defined on $S$, we can evaluate $M$ on any fusion system $\cF$ over $S$ according to the following definition:
\begin{definition}\label{defFstable}
Let $\cF$ be a fusion system on $S$, and let $M$ be a biset functor/global Mackey functor defined on $S$. We define $M(\cF)$ to be the set of $\cF$-stable elements in $M(S)$:
\[M(\cF) := \{X\in M(S)\mid \res_\ph X = \res_P X \text{ for all $P\leq S$ and $\ph\in \cF(P,S)$}\}.\]
\end{definition}

There is a close relationship between saturated fusion systems on $S$ and certain bisets in the double Burnside ring $A(S,S)$:
\begin{definition}\label{defFcharacteristic}
A virtual $(S,S)$-biset $X\in A(S,S)$ (or more generally in $A(S,S)_{(p)}$ or $A(S,S)^\wedge_p$) is said to be \emph{$\cF$-characteristic} if
\begin{itemize}
  \item $X$ is a linear combination of the transitive bisets on the form $[P,\ph]_S^S$ with $P\leq S$ and $\ph\in \cF(P,S)$.
  \item $X$ is $\cF$-stable for the left and the right actions of $S$. If we consider $A(S,S)$ as a biset functor in both variables, the requirement is that $X\in A(\cF,\cF)$, cf. Definition \ref{defFstable}.
  \item $\abs{X}/\abs{S}$ is not divisible by $p$.
\end{itemize}
\end{definition}

There are a couple of particular $\cF$-characteristic elements $\Omega_\cF$ and $\omega_\cF$ that we shall make use of, and that are reasonably well behaved: $\Omega_\cF$ is an actual biset, and $\omega_\cF$ is idempotent. These particular elements have been studied in a series of papers \citelist{\cite{Ragnarsson} \cite{RagnarssonStancu} \cite{ReehIdempotent} \cite{GelvinReeh}}, and we gather their defining properties in the following proposition:
\begin{prop}[\citelist{\cite{Ragnarsson} \cite{RagnarssonStancu} \cite{GelvinReeh}}]
A fusion system $\cF$ on $S$ has characteristic elements in $A(S,S)^\wedge_p$ only if $\cF$ is saturated (see \cite{RagnarssonStancu}*{Corollary 6.7}).
Let $\cF$ be a saturated fusion system on $S$.

$\cF$ has a unique minimal $\cF$-characteristic actual biset $\Omega_\cF$ which is contained in all other $\cF$-characteristic actual bisets (see \cite{GelvinReeh}*{Theorem A}).

Inside $A(S,S)_{(p)}$ there is an idempotent $\omega_\cF$ that is also $\cF$-characteristic. This $\cF$-characteristic idempotent is unique in $A(S,S)_{(p)}$ and even $A(S,S)^\wedge_p$ (see \cite{Ragnarsson}*{Proposition 5.6}, or \cite{ReehIdempotent}*{Theorem B} for a more explicit construction).

It is possible to reconstruct the saturated fusion system $\cF$ from any $\cF$-characteristic element, in particular from $\Omega_\cF$ or $\omega_\cF$ (see \cite{RagnarssonStancu}*{Proposition 6.5}).
\end{prop}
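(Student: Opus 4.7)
The statement bundles four results from the literature, and the plan is to outline a strategy for each piece. For the implication that the existence of an $\cF$-characteristic element $X \in A(S,S)^\wedge_p$ forces $\cF$ to be saturated, I would expand $X$ as a linear combination of transitive bisets $[P,\ph]_S^S$ with $\ph \in \cF(P,S)$, translate two-sided $\cF$-stability into equalities among the multiplicities for $\cF$-conjugate pairs $(P,\ph)$, and combine these with the condition that $p$ does not divide $\abs{X}/\abs{S}$. The multiplicities of the $[P,\ph]$ with $P$ fully normalized encode, via double-coset counting, the action of $\Aut_S(P)$ on $\Aut_\cF(P)$; the $p$-prime index condition then forces $\Aut_S(P)$ to be a Sylow $p$-subgroup of $\Aut_\cF(P)$. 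The extension axiom follows in the same spirit by applying $\cF$-stability to conjugation maps by elements of $N_\ph$ and reading from the multiplicities that $\ph$ admits the required extension.

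For the minimal biset $\Omega_\cF$, I would parametrize actual $\cF$-characteristic bisets by their nonnegative multiplicity vectors indexed by $\cF$-orbits of pairs $(P,\ph)$ with $\ph \in \cF(P,S)$, and show that the set of solutions of the $\cF$-stability relations is closed under coordinatewise minimum. Together with the observation that the $p$-prime index condition survives the minimum operation whenever at least one input satisfies it, this yields a unique minimum vector, realized by $\Omega_\cF$. Existence of at least one characteristic biset to begin with can be established using Alperin's fusion theorem to glue together bisets adapted to a generating family of $\cF$-morphisms.

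For the idempotent $\omega_\cF$, the plan is to normalize $\Omega_\cF$ by the $p$-unit $k = \abs{\Omega_\cF}/\abs{S}$ to obtain $Y \in A(S,S)_{(p)}$ and to observe that the iterates $Y^{\circ n}$ converge $p$-adically to an idempotent in $A(S,S)^\wedge_p$. The subtle point -- and the main obstacle in the whole plan -- is to show that this limit actually lies in the $p$-local ring $A(S,S)_{(p)}$ rather than only in its $p$-completion; this is most cleanly handled by the explicit formula of Reeh expressing $\omega_\cF$ as a rational combination of specific $\cF$-orbits. Uniqueness follows from a standard argument: composing two $\cF$-characteristic idempotents and applying the $\cF$-stability relations on both sides forces the two idempotents to agree.

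For reconstruction, the transitive summands appearing in any characteristic element lie in $\cF$ by definition; conversely, every $\ph \in \cF(P,S)$ must appear, for otherwise a pair $(P,\ph)$ maximal among missing morphisms would force the multiplicities summed over its $\cF$-orbit to vanish modulo $p$ by the $\cF$-stability relations, contradicting the $p$-prime index hypothesis. Hence the collection of $\cF$-conjugacy classes of pairs $(P,\ph)$ with nonzero multiplicity in a characteristic element recovers the morphism set of $\cF$ exactly.
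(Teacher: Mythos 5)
This proposition is stated in the paper purely as a recollection of results from the cited literature (Ragnarsson, Ragnarsson--Stancu, Gelvin--Reeh, Reeh); the paper gives no proof of its own, only pointers to the references. So there is no in-paper argument to compare against, and your proposal is really a sketch of the external proofs.

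Your sketches are broadly in the spirit of those references, but the outline for the minimal characteristic biset has a concrete gap. You claim that the solution set of the $\cF$-stability relations is closed under coordinatewise minimum and that the $p'$-index condition survives the minimum operation. Neither of these is evident in the basis of transitive bisets $[P,\ph]_S^S$: the stability relations are equalities among linear combinations of coefficients (involving induction and restriction terms), and solution sets of such systems are not in general closed under coordinatewise min; likewise, the index $\abs{\min(X_1,X_2)}/\abs{S}$ need not be prime to $p$ just because $\abs{X_1}/\abs{S}$ and $\abs{X_2}/\abs{S}$ are. The argument in Gelvin--Reeh avoids this by first proving a structure theorem: the right-free $\cF$-stable $(S,S)$-bisets form a \emph{free} commutative monoid on basis elements $\alpha_P$ indexed by $\cF$-conjugacy classes of subgroups $P\leq S$. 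In that basis, an $\cF$-stable biset is $\sum_P c_P\alpha_P$ with $c_P\geq 0$, a characteristic biset is shown to have $c_P\geq 1$ for all $P$, and $\Omega_\cF=\sum_P\alpha_P$ is then the minimum; the $p'$-index is checked directly for $\Omega_\cF$ rather than deduced by a closure property. Without passing to this free basis, your minimum argument does not go through. Separately, for the reconstruction statement, be careful: what is shown in Ragnarsson--Stancu is that $\cF$ is uniquely determined by any characteristic element, which is weaker than your claim that every morphism $\ph\in\cF(P,S)$ must appear with nonzero multiplicity; the latter is true for $\omega_\cF$ but need not hold for an arbitrary characteristic element such as $\Omega_\cF$, and the maximal-missing-pair argument you gesture at would need to be recast as showing that the appearing pairs \emph{generate} $\cF$, not that they exhaust it.
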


Using the characteristic idempotent $\omega_\cF$ it becomes rather straightforward to calculate the $\cF$-stable elements for any biset functor/global Mackey functor after $p$-localization.

\begin{prop}\label{propIdempotentBisetTransfer}
Let $\cF$ be a saturated fusion system on $S$, and let $M$ be a biset functor/global Mackey functor defined on $S$. We then obtain
\[M(\cF)_{(p)} = \omega_\cF\cdot M(S)_{(p)},\]
and writing $\tr^\cF_S\colon M(S) \xto{\omega_\cF} M(\cF)$, $\res_S^\cF\colon M(\cF)\into M(S)$, we have $\tr_S^\cF \circ \res^\cF_S = \id$.
\end{prop}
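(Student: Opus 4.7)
The plan is to split the proposition into two assertions: (i) $\omega_\cF \cdot M(S)_{(p)} \subseteq M(\cF)_{(p)}$, and (ii) $\omega_\cF \cdot X = X$ for every $X \in M(\cF)_{(p)}$. Together these give $M(\cF)_{(p)} = \omega_\cF \cdot M(S)_{(p)}$ (the nontrivial inclusion coming from (ii) via $X = \omega_\cF \cdot X$), and also yield the section identity $\tr_S^\cF \circ \res_S^\cF = \id$, since $\res_S^\cF$ is the inclusion and $\tr_S^\cF = M(\omega_\cF)$.

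For Part (i), I would fix $Y \in M(S)_{(p)}$ and $\ph \in \cF(P,S)$, and compute $\res_\ph(\omega_\cF Y)$ using contravariance of $M$ together with the composition rule $\omega_\cF \circ [P,\ph]_P^S = [P,\ph]_P^S \times_S \omega_\cF$ in $\cA$. This yields
\[
\res_\ph(\omega_\cF \cdot Y) = M\bigl([P,\ph]_P^S \times_S \omega_\cF\bigr)(Y).
\]
The $\cF$-stability of $\omega_\cF$ in the first variable---one half of the condition $\omega_\cF \in A(\cF,\cF)$ contained in the $\cF$-characteristic definition---then gives $[P,\ph] \times_S \omega_\cF = [P,\incl] \times_S \omega_\cF$ in $A(P,S)$, so that $\res_\ph(\omega_\cF Y) = \res_P(\omega_\cF Y)$.

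For Part (ii), I would expand $\omega_\cF = \sum_i a_i [R_i,\ph_i]_S^S$ with each $\ph_i \in \cF(R_i,S)$, and use the standard factorization $M([R_i,\ph_i]_S^S) = \ind_{R_i}^S \circ \res_{\ph_i}$ coming from the decomposition of the transitive biset into restriction, isomorphism, and induction. For $X \in M(\cF)_{(p)}$, the $\cF$-stability $\res_{\ph_i} X = \res_{R_i} X$ collapses each summand, giving
\[
\omega_\cF \cdot X = \sum_i a_i \,\ind_{R_i}^S \res_{R_i}^S X.
\]
To identify this expression with $X$ itself, I would invoke the defining properties of the characteristic idempotent from \cite{Ragnarsson}*{Proposition 5.6} and \cite{ReehIdempotent}*{Theorem B}: combined with $\omega_\cF^2 = \omega_\cF$ and both-sided $\cF$-stability, these force the coefficients $a_i$ to be precisely those realizing $\omega_\cF$ as the projection onto the $\cF$-stable part of $A(S,S)_{(p)}$, and by naturality of the $A(S,S)_{(p)}$-module action on $M(S)_{(p)}$ the same identity transfers to $M$.

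The main obstacle is this last step in Part (ii): extracting the identity-on-stable-elements property directly from the axiomatic conditions (idempotent plus $\cF$-characteristic) requires invoking the explicit construction of $\omega_\cF$ from the cited references, rather than working only from the abstract defining conditions. Once this is in hand, the remaining functorial reductions---the composition formula in $\cA$, the decomposition of transitive bisets, and the definition of $\cF$-stability---are routine.
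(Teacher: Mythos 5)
Your overall structure matches the paper's proof: Part (i) is exactly the paper's argument, and in Part (ii) you correctly expand $\omega_\cF$ into a linear combination of transitive bisets $[R_i,\ph_i]_S^S$ and use $\cF$-stability of $X$ to collapse $\res_{\ph_i}$ to $\res_{R_i}$. The point where you stop and flag an obstacle is indeed where the key input enters, but the fact you want is more specific and more elementary than what you gesture at. After collapsing, you have $\omega_\cF \cdot X = \sum_i a_i\, \ind_{R_i}^S \res_{R_i}^S X$, and you should regroup the sum by the $S$-conjugacy class of $R_i$, writing
\[
\omega_\cF \cdot X = \sum_{(Q)_S} \Bigl(\sum_{(\psi)_{N_S(Q),S}} c_{Q,\psi}\Bigr) \ind_Q^S \res_Q^S X.
\]
The fact that finishes the proof is a purely numerical statement about the coefficients of $\omega_\cF$: the inner sum $\sum_{(\psi)_{N_S(Q),S}} c_{Q,\psi}$ equals $1$ when $Q=S$ and $0$ when $Q<S$. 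This is established in \cite{Ragnarsson}*{Lemma 5.5} and \cite{ReehIdempotent}*{4.7 and 5.10}, and it immediately yields $\omega_\cF \cdot X = \ind_S^S \res_S^S X = X$. No appeal to $\omega_\cF$ being a ``projection onto the $\cF$-stable part of $A(S,S)_{(p)}$'' or to naturality of the module action is needed, and indeed the latter framing is slightly circular, since $\omega_\cF$ acting as identity on $\cF$-stable elements is precisely what you are trying to prove. The coefficient-sum lemma is the clean replacement for that hand-wave, and it is the same citation the paper itself uses.
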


\begin{proof}
The inclusion $\omega_\cF\cdot M(S)_{(p)}\leq M(\cF)_{(p)}$ follows immediately from the fact that $\omega_\cF$ is $\cF$-stable: For each $X\in M(S)_{(p)}$, $P\leq S$ and $\ph\in \cF(P,S)$, we get
\[\res_\ph(\omega_\cF \cdot X) = (\omega_\cF\circ [P,\ph]_P^S)\cdot X = (\omega_\cF\circ [P,\incl]_P^S)\cdot X =\res_P(\omega_\cF\cdot X);\]
hence $\omega_\cF \cdot X\in M(\cF)_{(p)}$.

It remains to show that $\omega_\cF \cdot X = X$ for all $X\in M(\cF)_{(p)}$. The following is essentially the proof by Kári Ragnarsson of a similar result \cite{Ragnarsson}*{Corollary 6.4} for maps of spectra.

We write $\omega_\cF$ as a linear combination of $(S,S)$-biset orbits:
\[\omega_\cF = \sum_{(Q,\psi)_{S,S}} c_{Q,\psi} [Q,\psi]_S^S.\]
Suppose then that $X\in M(\cF)_{(p)}$, and we calculate $\omega_\cF\cdot X$ using the decomposition of $\omega_\cF$ above:
\begin{align*}
\omega_\cF\cdot X &= \left(\sum_{(Q,\psi)_{S,S}} c_{Q,\psi} [Q,\psi]_S^S\right)\cdot X
\\ &= \sum_{(Q,\psi)_{S,S}} c_{Q,\psi} \ind_Q^S\res_\psi X
\\ &= \sum_{(Q,\psi)_{S,S}} c_{Q,\psi} \ind_Q^S\res_Q^S X
\\ &= \sum_{(Q)_S} \left(\sum_{(\psi)_{N_S(Q),S}} c_{Q,\psi}\right) \ind_Q^S\res_Q^S X
\end{align*}
According to \cite{Ragnarsson}*{Lemma 5.5} and \cite{ReehIdempotent}*{4.7 and 5.10}, the sum of coefficients $\sum_{(\psi)_{N_S(Q),S}} c_{Q,\psi}$ equals $1$ for $Q=S$ and $0$ otherwise, hence we conclude
\begin{align*}
\omega_\cF\cdot  X &= \sum_{(Q)_S} \left(\sum_{(\psi)_{N_S(Q),S}} c_{Q,\psi}\right) \ind_Q^S\res_Q^S X = \ind_S^S \res_S^S X = X.\qedhere
\end{align*}
\end{proof}

\begin{prop}\label{propShortExactBisetFunctors}
Let $\cF$ be a saturated fusion system on $S$. If
\[0\to M_1\to M_2\to M_3\to 0\]
is a short-exact sequence of biset functors/global Mackey functors defined on $S$, then the induced sequence
\[0\to M_1(\cF)_{(p)} \to M_2(\cF)_{(p)}\to M_3(\cF)_{(p)} \to 0\]
is exact.
\end{prop}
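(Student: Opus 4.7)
The plan is to use Proposition \ref{propIdempotentBisetTransfer} to identify $M_i(\cF)_{(p)}$ with the image of the idempotent endomorphism given by multiplication by $\omega_\cF$, and then observe that applying an idempotent to a short exact sequence of modules preserves exactness.

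First I would note that the maps $f\colon M_1\to M_2$ and $g\colon M_2\to M_3$ are natural transformations of biset functors (resp.\ global Mackey functors), so they commute with the action of every element of $A(S,S)_{(p)}$; in particular, they commute with $\omega_\cF\cdot(-)$. Consequently the diagram
\[
\begin{array}{ccccccccc}
0 & \to & M_1(S)_{(p)} & \xto{f} & M_2(S)_{(p)} & \xto{g} & M_3(S)_{(p)} & \to & 0 \\
  &     & \downarrow \omega_\cF & & \downarrow \omega_\cF & & \downarrow \omega_\cF & & \\
0 & \to & M_1(S)_{(p)} & \xto{f} & M_2(S)_{(p)} & \xto{g} & M_3(S)_{(p)} & \to & 0
\end{array}
\]
commutes, where the top row is exact since $(-)_{(p)}$ is exact. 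By Proposition \ref{propIdempotentBisetTransfer}, the image of each vertical map is exactly $M_i(\cF)_{(p)}$, and each vertical map is an idempotent splitting $M_i(S)_{(p)} = M_i(\cF)_{(p)} \oplus \ker(\omega_\cF\cdot(-))$.

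Next I would check the three required properties in turn. Injectivity of $f\colon M_1(\cF)_{(p)}\into M_2(\cF)_{(p)}$ is immediate, since $M_1(\cF)_{(p)}\subseteq M_1(S)_{(p)}$ and $f$ is injective on the bigger group. Exactness at $M_2(\cF)_{(p)}$ follows similarly: if $x\in M_2(\cF)_{(p)}$ lies in $\ker g$, then by exactness of the top row there is $y\in M_1(S)_{(p)}$ with $f(y)=x$; applying $\omega_\cF$, we obtain $f(\omega_\cF y) = \omega_\cF f(y) = \omega_\cF x = x$, and $\omega_\cF y\in M_1(\cF)_{(p)}$ by Proposition \ref{propIdempotentBisetTransfer}.

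For surjectivity of $g\colon M_2(\cF)_{(p)}\to M_3(\cF)_{(p)}$, let $z\in M_3(\cF)_{(p)}$. By surjectivity in the top row there is $y\in M_2(S)_{(p)}$ with $g(y)=z$. Then $\omega_\cF y\in M_2(\cF)_{(p)}$ and
\[g(\omega_\cF y) = \omega_\cF\cdot g(y) = \omega_\cF\cdot z = z,\]
where the last equality uses $\tr^\cF_S\circ\res^\cF_S = \id$ from Proposition \ref{propIdempotentBisetTransfer} applied to the $\cF$-stable element $z$. This completes all three steps.

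I do not anticipate a genuine obstacle; the whole argument rests on two routine inputs, namely the exactness of $p$-localization and the fact that multiplication by the idempotent $\omega_\cF$ is a natural splitting. The only point that requires a moment of care is making sure that $\omega_\cF$ acts naturally on morphisms between biset functors, which is automatic from the definition of a natural transformation of biset functors (or of global Mackey functors, where the relevant virtual biset $\omega_\cF$ is itself bifree since it is a linear combination of bifree bisets $[P,\ph]_S^S$).
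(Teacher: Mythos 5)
Your proof is correct and follows essentially the same approach as the paper: both arguments use exactness of $p$-localization and then deploy the idempotent $\omega_\cF$ (via Proposition \ref{propIdempotentBisetTransfer}) to project lifts from $M_i(S)_{(p)}$ back into the $\cF$-stable subgroups $M_i(\cF)_{(p)}$, checking injectivity, exactness in the middle, and surjectivity in turn. Your version is slightly more explicit about why $\omega_\cF$ commutes with the maps (naturality) and about the splitting it induces, but the substance is identical.
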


\begin{proof}
Note that exactness of $0\to M_1\to M_2\to M_3\to 0$ implies that
\[0\to (M_1)_{(p)}\to (M_2)_{(p)}\to (M_3)_{(p)}\to 0\]
is exact as well. Additionally we have a sequence of inclusions
\[
\begin{tikzpicture}
\node (M) [matrix of math nodes] {
   0 &[1cm] M_1(\cF)_{(p)} &[1cm] M_2(\cF)_{(p)} &[1cm] M_3(\cF)_{(p)} &[1cm] 0 \\[1cm]
   0 & M_1(S)_{(p)} & M_2(S)_{(p)} & M_3(S)_{(p)} & 0 \\
};
\path[->,arrow]
    (M-1-1) edge (M-1-2)
    (M-1-2) edge (M-1-3)
    (M-1-3) edge (M-1-4)
    (M-1-4) edge (M-1-5)
    (M-1-2) edge[right hook->] (M-2-2)
    (M-1-3) edge[right hook->] (M-2-3)
    (M-1-4) edge[right hook->] (M-2-4)
    (M-2-1) edge (M-2-2)
    (M-2-2) edge (M-2-3)
    (M-2-3) edge (M-2-4)
    (M-2-4) edge (M-2-5)
;
\end{tikzpicture}
\]

We start with proving injectivity and suppose that $X\in M_1(\cF)_{(p)}$ maps to $0$ in $M_2(\cF)_{(p)}$. Considered as an element of $M_1(S)_{(p)}$, then $X$ also maps to $0$ in $M_2(S)_{(p)}$, hence by injectivity of $M_1(S)_{(p)} \to M_2(S)_{(p)}$, $X=0$.

Next up is surjectivity, so we consider an arbitrary $X\in M_3(\cF)_{(p)}$. By surjectivity of $M_2(S)_{(p)} \to M_3(S)_{(p)}$ we can find $Y\in M_2(S)_{(p)}$ that maps to $X$. Then $\omega_\cF\cdot  Y\in M_2(\cF)_{(p)}$ maps to $\omega_\cF\cdot  X$, and since $X$ is $\cF$-stable, $\omega_\cF\cdot  X$ equals $X$.

Finally we consider the middle of the sequence and suppose that $X\in  M_2(\cF)_{(p)}$ maps to $0$ in $M_3(\cF)_{(p)}$. Then there exists a $Y\in M_1(S)_{(p)}$ that maps to $X$. Similarly to above, $\omega_\cF\cdot  Y\in M_1(\cF)_{(p)}$ then maps to $\omega_\cF\cdot  X=X\in M_2(\cF)_{(p)}$.
\end{proof}

\section{Representation rings}
\label{sectRepresentations}

Throughout this entire section we use $\KK$ to denote a subfield of the complex numbers. In applications further on, $\KK$ will be one of $\QQ$, $\RR$ and $\CC$.

\begin{definition}
\label{defRepRing}
Let $S$ be a finite $p$-group. The \emph{representation semiring}  $R^+_\KK(S)$ consists of the isomorphism classes of finite dimensional representations of $S$ over $\KK$; addition is given by direct sum $\oplus$ and multiplication by tensor product $\ox_\KK$.

The \emph{representation ring} $R_\KK(S)$ is the Grothendieck ring of the semiring $R^+_\KK(S)$, and consists of virtual representations $X=U-V\in R_\KK(S)$ with $U,V\in R^+_\KK(S)$.
\end{definition}

Additively $R_\KK^+(S)$ is a free abelian monoid, so it has the cancellation property and $R_\KK^+(S)\to R_\KK(S)$ in injective. In particular for all representations $U_1,U_2,V_1,V_2\in R^+_\KK(S)$ we have $U_1-V_1=U_2-V_2$ in the representation ring if and only if $U_1\oplus V_2$ and $U_2\oplus V_1$ are isomorphic representations.

\begin{remark}
The representation ring $R_\KK(-)$ has the structure of a biset functor over all finite groups. For a $(G,H)$-biset $X$ and an $H$-representation $V$, the product $\KK[X]\otimes_{\KK[H]} V$ is a $\KK$-vector space on which $G$ acts linearly, and we define
\[X^*(V) := \KK[X]\otimes_{\KK[H]} V\in R_\KK(G),\] which extends linearly to all virtual representations $V\in R_\KK(H)$ and virtual bisets $X\in A(G,H)$.

If $K\leq H$ is a subgroup, then $H$ as a $(K,H)$-biset gives restriction $\res^H_K$ of representations from $H$ to $K$, while $H$ as a $(H,K)$-biset gives induction $\ind^H_K$ of representations from $K$ to $H$.
\end{remark}

\begin{definition}
Let $\cF$ be a saturated fusion system on $S$. As described in Definition \ref{defFstable}, we define the \emph{representation ring} $R_\KK(\cF)$ as the ring of $\cF$-stable virtual $S$-representations, i.e.  virtual representations $V$ satisfying $\res_\ph V \cong \res_P V$ for all $P\leq S$ and homomorphisms $\ph\in \cF(P,S)$.
To see that $R_\KK(\cF)$ is a subring and closed under multiplication, note that $\res_\ph$ respects tensor products $\ox_\KK$, so tensor products of $\cF$-stable representations are again $\cF$-stable.

As the restriction of any actual representation is a representation again, the definition above makes sense in $R_\KK^+(S)$ as well, and we define the \emph{representation semiring} $R_\KK^+(\cF)$ to consist of all the $\cF$-stable representations of $S$.
\end{definition}

\begin{prop}\label{propVirtualFstableReps}
Let $\cF$ be a saturation fusion system on $S$. The representation ring $R_\KK(\cF)$ for $\cF$ is the Grothendieck ring of the representation semiring $R_\KK^+(\cF)$.
\end{prop}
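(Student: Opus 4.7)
The plan is to show that the natural ring homomorphism $\Phi\colon K(R_\KK^+(\cF))\to R_\KK(\cF)$, which exists because differences of $\cF$-stable representations remain $\cF$-stable, is both injective and surjective. Injectivity will be immediate from the cancellation property of the ambient semiring $R_\KK^+(S)$: by Maschke's theorem and Krull--Schmidt, $R_\KK^+(S)$ is the free abelian monoid on isomorphism classes of irreducible $\KK S$-modules, hence cancellative. Therefore two formal differences $W_1 - W_2$ and $W_1' - W_2'$ of $\cF$-stable actual representations agree in $K(R_\KK^+(\cF))$ if and only if $W_1 \oplus W_2' \cong W_1' \oplus W_2$ in $R_\KK^+(S)$, which is precisely their equality in $R_\KK(S)$.

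The real content is surjectivity. My plan is to produce a single actual $\cF$-stable representation $W$ with the property that, for every virtual $V \in R_\KK(\cF)$, there exists $n \in \NN$ making $V \oplus nW$ actual; then $W_1 := V + nW$ and $W_2 := nW$ both lie in $R_\KK^+(\cF)$ (with $W_1$ being $\cF$-stable as a sum of $\cF$-stable things) and give the required decomposition $V = W_1 - W_2$. I would take $W$ to be the regular representation $\KK[S]$: since $\KK[S]$ contains every irreducible $\KK S$-module with strictly positive multiplicity, writing $V = \sum_i a_i V_i$ in the $\ZZ$-basis of irreducibles makes it clear that $V + n\KK[S]$ has all nonnegative coefficients for $n$ sufficiently large.

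The main obstacle, and the only nontrivial step, will be the verification that the regular representation $\KK[S]$ is itself $\cF$-stable. For $\varphi \in \cF(P,S)$, both $\res_\varphi \KK[S]$ and $\res_P \KK[S]$ should decompose as $(|S|/|P|)$ copies of the regular $P$-representation $\KK[P]$: in $\res_\varphi \KK[S]$ the action $p\cdot v := \varphi(p)v$ on the basis $S$ has free $P$-orbits of size $|P|$ (free because $\varphi$ is injective), yielding $|S|/|P|$ copies of $\KK[P]$; in $\res_P \KK[S]$ the orbits of $P\leq S$ on $S$ are the right cosets $Ps$, each again a free $P$-set of size $|P|$. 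Once this identification is in hand, $\KK[S]\in R_\KK^+(\cF)$ and the surjectivity argument above completes the proof.
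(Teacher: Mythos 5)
Your proof is correct, and it takes a genuinely different and more elementary route to surjectivity than the paper. The paper invokes the minimal $\cF$-characteristic biset $\Omega_\cF$ (from Gelvin--Reeh) and the observation that $\Omega_\cF-[S,\id]$ is an actual biset, then applies it to an arbitrary $S$-representation $V$ to write any $\cF$-stable $X=U-V$ as $\bigl(U+(\Omega_\cF-[S,\id])^*V\bigr)-(\Omega_\cF)^*V$, with both terms $\cF$-stable because $\Omega_\cF$ is. You instead use the regular representation $\KK[S]$, which you verify directly is $\cF$-stable (both $\res_\varphi\KK[S]$ and $\res_P\KK[S]$ are $[S:P]$ copies of $\KK[P]$ by a free-orbit count, using only injectivity of $\varphi$), and then exploit that every irreducible $\KK S$-module appears in $\KK[S]$ with positive multiplicity to make $V+n\KK[S]$ actual for large $n$. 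Your argument avoids the characteristic-biset machinery entirely and, notably, does not use saturation of $\cF$ anywhere, whereas the paper's proof needs saturation to have $\Omega_\cF$ at all. The paper's approach buys uniformity: $(\Omega_\cF)^*(-)$ is a single biset operation turning any representation into an $\cF$-stable one, which fits the broader biset-functor framework of the paper, while yours is self-contained and works for arbitrary fusion systems.
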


\begin{proof}
The inclusion of semirings $R_\KK^+(\cF)\into R_\KK^+(S)$ induces a homomorphism between the Grothendieck rings $f\colon Gr(R_\KK^+(\cF)) \to R_\KK(S)$. The image of this map is the subgroup $\gen{R_\KK^+(\cF)}\leq R_\KK(S)$ generated by $R_\KK^+(\cF)$.
Because $R_\KK(S)$ has cancellation, the map $f\colon Gr(R_\KK^+(\cF)) \to \gen{R_\KK^+(\cF)}\leq R_\KK(S)$ is in fact injective, so $\gen{R_\KK^+(\cF)}\cong Gr(R_\KK^+(\cF))$.

Since all $\cF$-stable representations are in particular $\cF$-stable virtual representations, the inclusion $\gen{R_\KK^+(\cF)}\leq R_\KK(\cF)$ is immediate.
It remains to show that $R_\KK(\cF)$ is contained in $\gen{R_\KK^+(\cF)}$.

Let $X\in R_\KK(\cF)$ be an arbitrary $\cF$-stable virtual representation, and suppose that $X=U-V$ where $U,V$ are actual $S$-representations that are not necessarily $\cF$-stable. Let $\Omega_\cF$ be the minimal characteristic biset for $\cF$, and note that $\Omega_\cF$ has an orbit of the type $[S,id]$ (see \cite{GelvinReeh}*{Theorem 5.3}), so $\Omega_\cF-[S,id]$ is an actual biset.
We can then write $X$ as
\begin{multline}\label{eqDifferenceOfFstableReps}
X=U-V=(U+(\Omega_\cF-[S,id])^*V)-(V+(\Omega_\cF-[S,id])^*V) \\= (U+(\Omega_\cF-[S,id])^*V)-(\Omega_\cF)^*V.
\end{multline}
The $\cF$-stability of the biset $\Omega_\cF$ implies that $(\Omega_\cF)^*V$ is $\cF$-stable. Since $X$ is also $\cF$-stable, the sum
\[X + (\Omega_\cF)^*V = U+(\Omega_\cF-[S,id])^*V\]
has to be $\cF$-stable as well. Hence \eqref{eqDifferenceOfFstableReps} expresses $X$ as a difference of $\cF$-stable representations, so $X\in \gen{R_\KK^+(\cF)}$ as we wanted.
\end{proof}

Using the biset functor structure of $R_\KK(-)$, we note the following special case of Proposition \ref{propIdempotentBisetTransfer} for future reference:
\begin{prop}
Let $\cF$ be a saturation fusion system on $S$. After $p$-localizing the representation rings, we have
\[R_\KK(\cF)_{(p)} = \omega_\cF \cdot R_\KK(S)_{(p)},\]
and $\omega_{\cF}$ sends each $\cF$-stable virtual representation to itself.
\end{prop}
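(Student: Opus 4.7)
The plan is essentially to invoke Proposition \ref{propIdempotentBisetTransfer} for the specific biset functor $M = R_\KK(-)$. First I would recall that $R_\KK(-)$ was equipped with a biset functor structure in the remark following Definition \ref{defRepRing}: a virtual $(G,H)$-biset $X$ acts on a virtual $H$-representation $V$ by $X^*(V) = \KK[X]\otimes_{\KK[H]} V$, and this is contravariant and additive in $X$. In particular, for any $P\leq S$ and $\ph\in\cF(P,S)$, the action of the transitive biset $[P,\ph]_P^S$ on a representation $V\in R_\KK(S)$ agrees with the twisted restriction $\res_\ph V$, while $[P,\incl]_P^S$ acts as the ordinary restriction $\res_P^S V$. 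Hence Definition \ref{defFstable} applied to $M = R_\KK(-)$ recovers exactly the $\cF$-stable virtual representations of Definition 3.3, so $R_\KK(\cF) = M(\cF)$ in the biset-functor sense.

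Once this identification is made, the statement reduces immediately to Proposition \ref{propIdempotentBisetTransfer}. That proposition gives us $R_\KK(\cF)_{(p)} = \omega_\cF\cdot R_\KK(S)_{(p)}$ together with the splitting $\tr^\cF_S\circ \res^\cF_S = \id$. The second assertion, that $\omega_\cF$ fixes every $\cF$-stable virtual representation pointwise, is precisely the identity $\omega_\cF \cdot X = X$ for $X\in M(\cF)_{(p)}$, which is proved inside Proposition \ref{propIdempotentBisetTransfer} by expanding $\omega_\cF$ as a sum of orbits $[Q,\psi]_S^S$, using $\cF$-stability to replace each $\res_\psi X$ with $\res_Q^S X$, and applying the Ragnarsson--Reeh coefficient sum identity which collapses everything to $\ind_S^S\res_S^S X = X$.

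There is really no serious obstacle here; the substantive work has already been done in Proposition \ref{propIdempotentBisetTransfer}. The only thing one might want to be slightly careful about is whether $R_\KK(-)$ qualifies as a biset functor in the sense required (i.e.\ contravariant $R$-linear functor from the biset category to $R$-modules). This is standard and was already stated in the remark preceding the definition of $R_\KK(\cF)$, so citing it suffices. Thus the proof is a one-line application once the identification $R_\KK(\cF) = M(\cF)$ is made explicit.
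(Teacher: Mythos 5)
Your proposal matches the paper exactly: the paper introduces this proposition with the phrase ``we note the following special case of Proposition \ref{propIdempotentBisetTransfer} for future reference,'' i.e.\ it is obtained by specializing that proposition to the biset functor $M = R_\KK(-)$, just as you do. Your added check that Definition \ref{defFstable} applied to $R_\KK(-)$ recovers the $\cF$-stable virtual representations is the right thing to verify and is implicit in the paper.
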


The character ring $R_\KK(S)$ over $\KK$ embeds in the complex representation ring $R_\CC(S)$ by the map
\[R_\KK(S)\xto{\CC[S]\ox_{\KK[S]} -} R_\CC(S),\]
which is injective according to \cite{Serre}*{page 91}.
If we identify $R_\CC(S)$ with the ring of complex characters, the map sends each virtual representation $V\in R_\KK(S)$ to its character $\chi_V\colon S\to \KK\leq \CC$ defined as $\chi_V(s):=\text{Trace}(\rho_V(s))\in \KK$.

While the complex character for each $\KK[S]$-representation only takes values in $\KK$, it is not true that a complex representation with $\KK$-valued character has to come from a representation over $\KK$.

\begin{definition}
We define $\bar{R_\KK(S)}$ to be the subring of $R_\CC(S)$ consisting of complex characters that take all of their values in the subfield $\KK \leq \CC$.
\end{definition}

\begin{fact}\label{fact:SchurIndex}
The inclusion of rings $R_\KK(S)\leq \bar{R_\KK(S)}$ is of finite index. A proof of this fact can be found in \cite{Serre}*{Proposition 34}.
\end{fact}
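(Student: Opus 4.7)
The plan is to identify explicit $\ZZ$-bases for both rings and bound the index using Schur indices. First, I would recall that $R_\CC(S)$ is a free abelian group on the set $\mathrm{Irr}_\CC(S)$ of irreducible complex characters of $S$. Since character values of $S$ lie in the cyclotomic field $\QQ(\zeta_n)$ for $n = \exp(S)$, the absolute Galois group $\Gal(\bar\QQ/\QQ)$ acts on $\mathrm{Irr}_\CC(S)$ through its finite quotient $\Gal(\QQ(\zeta_n)/\QQ)$, and the subgroup $\Gamma_\KK := \Gal(\QQ(\zeta_n)/\KK\cap\QQ(\zeta_n))$ permutes the irreducibles. A complex character $\chi$ takes values in $\KK$ if and only if it is fixed by $\Gamma_\KK$, so the orbit sums
\[\Theta_{[\chi]} := \sum_{\sigma\in \Gamma_\KK/\Stab(\chi)} \chi^\sigma,\qquad [\chi]\in \mathrm{Irr}_\CC(S)/\Gamma_\KK,\]
form a $\ZZ$-basis of $\bar{R_\KK(S)}$.

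Next I would compare this to $R_\KK(S)$ via the Schur index. For each irreducible complex character $\chi$, classical Wedderburn theory (as in Serre's book) shows there is a unique irreducible $\KK[S]$-representation $V_{[\chi]}$ whose character is $m_\KK(\chi)\cdot \Theta_{[\chi]}$, where $m_\KK(\chi)\in\ZZ_{>0}$ is the Schur index of $\chi$ over $\KK$; moreover $m_\KK$ depends only on the $\Gamma_\KK$-orbit, and every irreducible $\KK[S]$-representation arises this way. Consequently $R_\KK(S)$ is the free abelian group on $\{m_\KK(\chi)\cdot\Theta_{[\chi]}\}_{[\chi]}$, sitting inside the free abelian group $\bar{R_\KK(S)}$ with the same rank.

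With these bases in hand, the inclusion $R_\KK(S)\hookrightarrow \bar{R_\KK(S)}$ is represented by the diagonal matrix with entries $m_\KK(\chi)$, so
\[\bigl[\bar{R_\KK(S)}:R_\KK(S)\bigr] \;=\; \prod_{[\chi]\in \mathrm{Irr}_\CC(S)/\Gamma_\KK} m_\KK(\chi).\]
Since $\mathrm{Irr}_\CC(S)$ is finite and each Schur index $m_\KK(\chi)$ is a positive integer (in fact it divides $|S|$, see Serre), this product is finite, proving the claim.

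The only genuine input is the statement that the character of an irreducible $\KK[S]$-representation is an integer multiple of a Galois orbit sum of complex irreducibles; the main obstacle, if one wants a self-contained account, is reproving this existence and uniqueness statement for the Schur index, which is exactly what Serre's Proposition 34 supplies, so citing it is the cleanest route.
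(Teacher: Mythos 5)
Your proof is correct and is essentially an unpacking of the argument the paper cites: Serre's Proposition~34 establishes exactly the statement that each irreducible $\KK[S]$-representation has character $m_\KK(\chi)\cdot\Theta_{[\chi]}$ for a Galois orbit sum $\Theta_{[\chi]}$, and the finite-index claim then follows by comparing the two $\ZZ$-bases as you do. Since the paper gives no proof beyond the citation, your write-up matches the intended argument.
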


\begin{definition}\label{defGaloisAction}
Let $\LL:= \KK(\zeta)\leq \CC$ be the extension of $\KK$ by some $(p^n)$th root of unity such that all irreducible complex characters for $S$ take their values in $\LL$. If all elements of $S$ have order at most $p^r$, then taking $\zeta$ as a $(p^r)$th root of unity will work.

Given any irreducible character $\chi$ and automorphism $\sigma\in \Gal(\LL / \KK)$, we define $\lc\sigma\chi \colon S \to \LL$ as
\[(\lc\sigma\chi)(s) := \sigma(\chi(s)) \text{ for $s\in S$},\]
which is another irreducible character of $S$. Taking the sum over all $\sigma\in \Gal(\LL / \KK)$ gives a character with values in the Galois fixed points, i.e. in $\KK$, so we define a transfer map $\tr\colon R_\CC(S) \to \bar{R_\KK(S)}$ by
\[\tr(\chi) := \sum_{\sigma\in \Gal(\LL / \KK)} \lc\sigma\chi.\]
Subsequently multiplying with index of $R_\KK(S)$ in $\bar{R_\KK(S)}$ gives a transfer map $R_\CC(S)\to R_\KK(S)$.
\end{definition}

Characters of $S$-representations are in particular class functions on $S$, i.e. functions defined on the conjugacy classes of $S$.
\begin{definition}
We denote the set of all complex valued class functions on $S$ by $\cl(S;\CC)$.
\end{definition}

\begin{remark}
The set of complex class functions $\cl(-;\CC)$ is a biset functor over all finite groups (see \cite{Bouc}*{Lemma 7.1.3}). Suppose $X$ is a $(G,H)$-biset and $\chi\in \cl(H;\CC)$ is a complex class function. The induced class function $X^*(\chi)$ is then given by
\[X^*(\chi)(g)=\frac1{\abs G} \sum_{\substack{x\in X,h\in H\\\text{s.t. }gx=xh}}\chi(h)\]
for $g\in G$. For complex characters $\chi\in R_\CC(H)$ the formula above coincides with the biset structure on representation rings (see \cite{Bouc}*{Lemma 7.1.3}).

Note that in the special case of restriction along a homomorphism $\ph\colon G\to H$, we just have
\[\res_\ph (\chi)(g) = \chi(\ph(g))\]
as we would expect.
\end{remark}

\begin{lemma}\label{lemFstableCharacters}
Let $\cF$ be a saturated fusion system on $S$, and let $\chi\in\cl(S;\CC)$ be any class function. Then $\chi$ is $\cF$-stable if and only if $\chi(s)=\chi(t)$ for all $s,t\in S$ that are conjugate in $\cF$ (meaning that $t=\ph(s)$ for some $\ph\in \cF$).
\end{lemma}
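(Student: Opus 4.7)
The plan is to unpack both directions directly from the definitions, using the formula for restriction of class functions along a homomorphism that was recorded in the preceding remark, namely $(\res_\ph \chi)(g) = \chi(\ph(g))$.

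First I would translate $\cF$-stability in this setting. By Definition \ref{defFstable}, $\chi$ is $\cF$-stable precisely when, for every $P\leq S$ and every $\ph\in\cF(P,S)$, the two class functions $\res_\ph\chi$ and $\res_P^S\chi$ on $P$ coincide. In view of the formula above this is the condition
\[\chi(\ph(g)) = \chi(g)\quad\text{for all } g\in P.\]

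For the forward direction, suppose $\chi$ is $\cF$-stable and that $s,t\in S$ are $\cF$-conjugate in the elementwise sense, i.e. $t=\ph(s)$ for some $\ph\in\cF(P,S)$ with $s\in P$. Applying the reformulation above to this particular $\ph$ and $g=s$ immediately yields $\chi(t)=\chi(\ph(s))=\chi(s)$.

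For the converse, assume $\chi$ takes equal values on every pair of $\cF$-conjugate elements of $S$. Fix any $P\leq S$ and $\ph\in\cF(P,S)$. For each $g\in P$, the element $\ph(g)\in S$ is by definition $\cF$-conjugate to $g$ (the morphism $\ph$ itself, or its restriction to $\langle g\rangle$, is the required witness in $\cF$), so $\chi(\ph(g))=\chi(g)$. This gives $\res_\ph\chi=\res_P^S\chi$, proving $\cF$-stability. There is no real obstacle here; the only step worth flagging is the benign observation that a morphism $\ph\in\cF(P,S)$ restricts to a morphism in $\cF$ on any subgroup of $P$ (by composing with the inclusion, which lies in $\Hom_S(\cdot,P)\subseteq\cF$), which legitimizes speaking of $\cF$-conjugacy of individual elements.
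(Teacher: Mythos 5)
Your proof is correct and follows essentially the same route as the paper: both translate $\cF$-stability into the elementwise condition $\chi(\ph(g))=\chi(g)$ via the formula for $\res_\ph$ on class functions, and both use the observation that morphisms in $\cF$ restrict to cyclic subgroups to pass between elementwise $\cF$-conjugacy and $\cF$-stability.
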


\begin{proof}
For every homomorphism $\ph\colon P\to S$, we have $\res_\ph(\chi)=\chi\circ \ph$. $\cF$-stability of $\chi$ is therefore the question whether $\chi\circ \ph = \chi|_P$ for all $P\leq S$ and $\ph\in\cF(P,S)$, which on elements becomes $\chi(\ph(s)) = \chi(s)$ for all $s\in P$.

We immediately conclude that $\chi$ is $\cF$-stable if and only if $\chi(\ph(s)) = \chi(s)$ for all $s\in P$, $P\leq S$ and $\ph\in \cF(P,S)$. By restricting each $\ph$ to the cyclic subgroup $\gen{s}\leq P$, it is enough to check that $\chi(\ph(s)) = \chi(s)$ for all $s\in S$ and $\ph\in \cF(\gen{s},S)$, i.e. that $\chi$ is constant on $\cF$-conjugacy classes of elements in $S$.
\end{proof}

\begin{prop}
Let $\LL$ be as in Definition \ref{defGaloisAction}, and let $\chi\in R_\CC(\cF)$ be any $\cF$-stable complex character. For every Galois automorphism $\sigma\in \Gal(\LL/\KK)$ the character $\lc\sigma\chi$ is again $\cF$-stable, and the transfer map $\tr\colon R_\CC(S)\to \bar{R_\KK(S)}$ restricts to a map $\tr\colon R_\CC(\cF)\to \bar{R_\KK(\cF)}$.
\end{prop}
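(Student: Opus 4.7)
The plan is to reduce everything to Lemma \ref{lemFstableCharacters}, which characterizes $\cF$-stability of a class function as invariance under $\cF$-conjugation of elements of $S$. Once we have this pointwise characterization, the statements become essentially formal because the Galois action is applied pointwise.

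First I would check that $\lc\sigma\chi$ is $\cF$-stable whenever $\chi$ is. Suppose $s,t\in S$ are $\cF$-conjugate, i.e.\ $t=\ph(s)$ for some $\ph\in \cF(\gen{s},S)$. Lemma \ref{lemFstableCharacters} applied to $\chi$ gives $\chi(s)=\chi(t)\in \LL$, and applying $\sigma\in\Gal(\LL/\KK)$ to both sides yields
\[(\lc\sigma\chi)(s)=\sigma(\chi(s))=\sigma(\chi(t))=(\lc\sigma\chi)(t).\]
Since $\lc\sigma\chi$ is again a (complex) class function on $S$ and is constant on $\cF$-conjugacy classes of elements, Lemma \ref{lemFstableCharacters} in the other direction tells us $\lc\sigma\chi\in R_\CC(\cF)$.

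Next I would assemble the transfer map. Since $R_\CC(\cF)\leq R_\CC(S)$ is a subring and each summand $\lc\sigma\chi$ of $\tr(\chi)=\sum_{\sigma\in\Gal(\LL/\KK)}\lc\sigma\chi$ is $\cF$-stable by the previous step, their sum is $\cF$-stable as well. By the construction recalled in Definition \ref{defGaloisAction}, the character $\tr(\chi)$ is fixed by every $\sigma\in\Gal(\LL/\KK)$ and therefore takes all its values in the fixed field $\KK$; hence $\tr(\chi)\in \bar{R_\KK(S)}$. Combining the two properties, $\tr(\chi)$ lies in $\bar{R_\KK(S)}\cap R_\CC(\cF)=\bar{R_\KK(\cF)}$, so $\tr$ restricts to the desired map $R_\CC(\cF)\to \bar{R_\KK(\cF)}$.

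There is really no serious obstacle here; the only point that requires a line of thought is the compatibility between the pointwise Galois action and the pointwise criterion of Lemma \ref{lemFstableCharacters}, which is what allows the argument to bypass any discussion of the biset-functor structure on $R_\CC(-)$.
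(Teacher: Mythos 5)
Your argument is correct and follows essentially the same route as the paper: both proofs use Lemma \ref{lemFstableCharacters} to translate $\cF$-stability into a pointwise condition on elements, observe that applying $\sigma$ pointwise preserves this condition, and conclude that $\tr(\chi)$, being a sum of $\cF$-stable characters, is itself $\cF$-stable. Your extra sentence noting that $\tr(\chi)$ is Galois-fixed and hence lands in $\bar{R_\KK(S)}$ is a minor amplification of what the paper leaves implicit in Definition \ref{defGaloisAction}, but the substance is identical.
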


\begin{proof}
According to Definition \ref{defGaloisAction} the character $\lc\sigma\chi$ is given by $\lc\sigma\chi(s) = \sigma(\chi(s))$ for $s\in S$. That $\chi$ is $\cF$-stable means, according to Lemma \ref{lemFstableCharacters}, that $\chi(\ph(s)) = \chi(s)$ for all $s\in S$ and $\ph\in\cF(\gen s,S)$. This clearly implies that
\[\lc\sigma\chi(\ph(s)) = \sigma(\chi(\ph(s)))=\sigma(\chi(s)) = \lc\sigma\chi(s)\]
for all $s\in S$ and $\ph\in\cF(\gen s,S)$, so $\lc\sigma\chi$ is $\cF$-stable as well.
Consequently, the transfer map applied to $\chi$ is
\[\tr(\chi) = \sum_{\sigma\in \Gal(\LL / \KK)} \lc\sigma\chi\]
which is a sum of $\cF$-stable characters and thus $\cF$-stable.
\end{proof}

\section{Borel-Smith functions for fusion systems}
\label{sect:BorelSmith}

Let $G$ be a finite group. A super class function defined on $G$ is a function $f$ from the set of all subgroups of $G$ to integers that is constant on $G$-conjugacy classes of subgroups. The set of super class functions for $G$, denoted by $C(G)$, is a ring with the usual addition and multiplication of integer valued functions. As an abelian group, $C(G)$ is a free abelian group with basis $\{ \e _H \mid H \in Cl(G) \}$, where $\e _ H (K)=1 $ if $K$ is conjugate to $H$, and zero otherwise. Here $Cl(G)$ denotes a set of representatives of conjugacy classes of subgroups of $G$.

We often identify $C(G)$ with the dual of the Burnside group $A^* (G):=\Hom (A(G), \ZZ )$, where a super class function $f$ is identified with the group homomorphism $A(G)\to \ZZ$ which takes  $G/H$ to $f(H)$ for every $H \leq G$. This identification
also gives a biset functor structure on $C(-)$ as the dual of the biset functor structure on the Burnside group functor $A(-)$. Given an $(K,H)$-biset $U$, the induced homomorphism $C(U)\colon C(H) \to C(K)$ is defined by setting
\[(U \cdot f ) (X)= f (U ^{op} \times _K X)\]
for every $f\in C(H)$ and $K$-set $X$. For example, if $K \leq H$ and $U=H$ as  a $(K,H)$ biset with the usual left and right multiplication maps, then $(U \cdot f) (K/L) =f(H/L)$ for every $L \leq K$. So, in this case $U\cdot f $ is the usual restriction of the super class function $f$ from $H$ to $K$, denoted by $\res ^H _K f$.

\begin{remark}
The biset structure on super class functions described above is the correct structure to use when considering dimension functions of representations as super class functions. However super class functions also play a role as the codomain for the so-called homomorphism of marks $\Phi\colon A(G)\to C(G)$ for the Burnside ring. $\Phi$ takes each finite $G$-set $X$ to the super class function $H\mapsto \abs{X^H}$ counting fixed points. As a word of caution we note that, with the biset structure on $C(-)$ given above, the mark homomorphism $\Phi$ is \emph{not} a natural transformation of biset functors. There is another structure of $C(-)$ as a biset functor, where $\Phi$ \emph{is} a natural transformation, but that would be the wrong one for the purposes of this paper.
\end{remark}

\begin{definition} Let $\cF$ be a fusion system on a $p$-group $S$. A \emph{super class function defined on $\cF$} is a function $f$ from the set of subgroups of $S$ to integers that is constant on $\cF$-conjugacy classes of subgroups.
\end{definition}

Let us denote the set of super class functions for the fusion system $\cF$ by $C'(\cF)$. As in the case of super class functions for groups, the set $C'(\cF)$ is also a ring with addition and multiplication of integer valued functions. Note that since $C(-)$ is a biset functor (with the biset structure described above), we have an alternative definition for $C(\cF)$ coming from Definition \ref{defFstable} as follows:
\[ C(\cF) := \{f\in C(S)\mid \res_\ph f = \res_P f \text{ for all $P\leq S$ and $\ph\in \cF(P,S)$}\}.
\]
Our first observation is that these two definitions coincide.

\begin{lemma} Let $\cF$ be a fusion system on a $p$-group $S$. Then $C'(\cF) =C(\cF)$ as subrings of $C(S)$.
\end{lemma}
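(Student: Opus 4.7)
The plan is to unwind both sides in terms of subgroups and reduce the equality $C(\cF) = C'(\cF)$ to the observation that the $\cF$-stability condition on super class functions is exactly constancy on $\cF$-orbits of subgroups. The only non-trivial ingredient is a concrete formula for the biset-restriction $\res_\ph$ along a homomorphism $\ph\colon P \to S$, applied to a super class function.

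First I would establish the formula $(\res_\ph f)(Q) = f(\ph(Q))$ for every $Q \leq P$ and $f \in C(S)$. This is a direct computation using the definition of the biset action on $C(-)$ as dual to $A(-)$: one takes $U = [P,\ph]_P^S$ (a $(P,S)$-biset) and evaluates $(\res_\ph f)(Q) = f(U^{op}\times_P (P/Q))$. Picking representatives of the form $(1,s)$ in $[P,\ph]_P^S$, one checks that $U^{op}\times_P (P/Q) \cong S/\ph(Q)$ as left $S$-sets, giving the claimed formula. In particular, taking $\ph = \incl_P^S$ recovers $(\res_P f)(Q) = f(Q)$ for $Q \leq P$. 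Hence the $\cF$-stability condition defining $C(\cF)$ becomes: for every $P \leq S$, every $\ph \in \cF(P,S)$, and every $Q \leq P$,
\[f(\ph(Q)) = f(Q).\]

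With this reformulation the two inclusions are immediate. For $C(\cF) \subseteq C'(\cF)$, given $\cF$-conjugate subgroups $P, P'\leq S$ with an isomorphism $\ph \in \cF(P,S)$ satisfying $\ph(P) = P'$, apply the condition above with $Q = P$ to obtain $f(P') = f(\ph(P)) = f(P)$, so $f$ is constant on $\cF$-conjugacy classes of subgroups. For $C'(\cF) \subseteq C(\cF)$, given $f \in C'(\cF)$, $\ph \in \cF(P,S)$ and $Q \leq P$, the axioms of a fusion system imply that $\ph$ restricts to an isomorphism $\ph|_Q \colon Q \xrightarrow{\sim} \ph(Q)$ in $\cF$ (since every morphism factors through its image in $\cF$), so $Q$ and $\ph(Q)$ are $\cF$-conjugate and $f(\ph(Q)) = f(Q)$.

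I do not expect a genuine obstacle: the entire content is the biset-theoretic verification that the induced-map definition of $\res_\ph$ on $C(S)$ is given by precomposition with $\ph$ at the level of subgroups. Once this is confirmed, the two conditions describing $C(\cF)$ and $C'(\cF)$ are visibly the same.
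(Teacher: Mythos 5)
Your proposal is correct and matches the paper's argument: the paper's one-line computation $(\res_\ph f)(P/L) = f(\ind_\ph(P/L)) = f(S/\ph(L))$ is exactly the formula $(\res_\ph f)(Q) = f(\ph(Q))$ you derive, and both directions then follow from $\cF$-conjugacy of $Q$ and $\ph(Q)$. You have simply written out the biset computation and the factorization argument (that $\ph|_Q$ is an $\cF$-isomorphism onto $\ph(Q)$) more explicitly than the paper, which is a fine elaboration rather than a different route.
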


\begin{proof} Let $f \in C'( \cF )$. Then for every $\varphi \in \cF (P, S)$ and $L \leq P$, we have
\[(\res _{\varphi} f ) (P/ L)=f( \ind _{\varphi} (P/L))=f(S/ \varphi (L) )=f(S/L)=(\res _P f )(P/L)\]
where $\ind _{\varphi} \colon A(P)\to A(S)$ is the homomorphism induced by the biset \[([P, \varphi]^S _P )^{op}= [\varphi(P), \varphi ^{-1}] ^P _S.\]
This calculation shows that $f \in C(\cF)$. The converse is also clear from the same calculation.
\end{proof}

The main purpose of this paper is to understand the dimension functions for representations in the context of fusion systems.
Now we introduce the dimension function of a representation.

Let $G$ be a finite group, and $\KK$ be a subfield of complex numbers $\CC$. As before let $R_\KK (G)$ denote the ring of $\KK$-linear representations of $G$  (see Definition \ref{defRepRing}). Recall that $R_\KK(G)$ is a free abelian group generated by isomorphism classes of irreducible representations of $G$ and the elements of $R_{\KK} (G)$ are virtual representations $V-W$. The \emph{dimension function} of a virtual representation $X=V-W$ is defined as the super class function $\Dim (X)$ defined by
\[\Dim (X)(H)= \dim _\KK (V^H)-\dim_\KK (W^H)\]
for every subgroup $H \leq G$. This gives a group homomorphism
\[\Dim \colon R_\KK (G) \to C(G)\]
which takes a virtual representation $X=V-W$ in $R_\KK (G)$ to its dimension function $\Dim (X) \in C(G)$.

\begin{lemma} The dimension homomorphism $\Dim \colon R_{\KK} (G) \to C(G)$ is a natural transformation of biset functors.
\end{lemma}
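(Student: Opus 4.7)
The plan is to check, for every $(K,H)$-biset $U$ and every $V\in R_\KK(H)$, that $\Dim(U^*V) = U\cdot \Dim V$ as super class functions on $K$. Evaluated at $L\leq K$ the claim is
\[
\dim_\KK\bigl(\KK[U]\otimes_{\KK[H]}V\bigr)^{L} \;=\; \Dim(V)\bigl(U^{\op}\times_K K/L\bigr).
\]
Both sides are additive in $U$ (disjoint unions of bisets correspond to direct sums of the induced representations, respectively disjoint unions of $H$-sets) and $\ZZ$-linear in $V$, so I would reduce at once to the case where $U$ is a transitive $(K,H)$-biset and $V$ is an actual, non-virtual representation.

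The geometric ingredient is the identification of the $H$-set $U^{\op}\times_K K/L$ with the set $L\backslash U$ of left $L$-orbits in $U$, the $H$-action being inherited from the right $H$-action on $U$. Decomposing as $L\backslash U \cong \bigsqcup_{i} H/H_i$ with point stabilizers $H_i\leq H$, the right-hand side of the displayed equation immediately becomes $\sum_i \dim_\KK V^{H_i}$.

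For the left-hand side I would use that $\KK\subseteq\CC$ has characteristic zero. The averaging projector $\tfrac{1}{|L|}\sum_{l\in L}l$ on $\KK[U]$ is $\KK[H]$-linear, because left-$L$ and right-$H$ actions commute on $U$, and acts as the identity on $L$-invariants; this yields
\[
\bigl(\KK[U]\otimes_{\KK[H]}V\bigr)^{L} \;\cong\; \KK[L\backslash U]\otimes_{\KK[H]}V.
\]
Decomposing into the right-$H$ orbit pieces $\KK[H_i\backslash H]$ and applying $\KK[H_i\backslash H]\otimes_{\KK[H]}V \cong \KK\otimes_{\KK[H_i]}V = V_{H_i}$, which equals $V^{H_i}$ in characteristic zero, the total dimension is $\sum_i \dim_\KK V^{H_i}$, matching the right-hand side.

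The main obstacle is bookkeeping with the opposite biset and left/right conventions, specifically verifying the isomorphism $U^{\op}\times_K K/L\cong L\backslash U$ as $H$-sets so that the stabilizers $H_i$ on the two sides are literally the same subgroups. As an alternative one could instead verify naturality on the five elementary generators of the biset category: restriction and isomorphism are immediate, induction is Frobenius reciprocity $\dim(\ind^H_Q V)^H = \dim V^Q$, and inflation/deflation use standard identifications of fixed points under quotient groups; however, the direct calculation above dispatches all generators uniformly.
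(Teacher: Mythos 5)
Your proof is correct, and it takes a genuinely different route from the paper's. The paper first reduces to $\KK=\CC$ via $\dim_\KK V^H = \dim_\CC(\CC\otimes_\KK V)^H$, rewrites fixed-point dimensions as complex inner products $\dim_\CC W^L = \langle W, \CC[K/L]\rangle$, and reduces naturality to an adjointness identity for the inner product. It then invokes Bouc's lemma that every biset factors as a composite of the five elementary types (restriction, induction, isomorphism, inflation, deflation) and checks the identity case by case, with Frobenius reciprocity handling induction. Your argument instead verifies the naturality square directly and uniformly for an arbitrary transitive biset: you identify $U^{\op}\times_K K/L$ with $L\backslash U$ as an $H$-set, split it into orbits $\bigsqcup_i H/H_i$, and match the resulting sum $\sum_i \dim_\KK V^{H_i}$ against the left-hand side via the characteristic-zero averaging idempotent and the isomorphism $\KK[H_i\backslash H]\otimes_{\KK[H]}V\cong V_{H_i}\cong V^{H_i}$. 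This avoids both the passage to $\CC$ and the case analysis over the five generators, at the modest cost of the biset bookkeeping you flagged; as your computation of the $H$-action and stabilizers on $L\backslash U$ shows, the $H_i$ produced on the two sides agree, so there is no gap.
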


\begin{proof} Note that for a $\KK$-linear $G$-representation $V$, we have $\dim _{\KK} V^H =\dim _{\CC} (\CC \otimes _{\KK} V) ^H$, so we can assume $\KK =\CC$. Consider the  usual inner product in $R_{\CC} (G)$ defined by $\langle V , W \rangle =\dim _{\CC} \Hom _{\CC [G]} (V, W)$.
Since
\[\dim _{\CC} V^H =\langle \res ^G _H V , 1_H \rangle=\langle V, \ind _H ^G 1_H \rangle= \langle V, \CC[G/H]\rangle,\]
it is enough to show that for every $(K,H)$-biset $U$, the equality
\[\langle \CC [U] \otimes _{\CC [H]} V , \CC [K/L]\rangle=\langle V, \CC [U^{op} \times _K (K/L) ] \rangle\]
holds for every $H$-representation $V$ and subgroup $L \leq K$.
By Lemma \cite{Bouc}*{Lemma 2.3.26}, every $(K,H)$-biset is a composition of 5 types of bisets. For the induction biset the above formula follows from Frobenius reciprocity, for the other bisets the formula holds for obvious reasons.
\end{proof}

If $\cF$ is a fusion system on a $p$-group $S$, then recall (Proposition \ref{propVirtualFstableReps}) that every element  $X \in R_\KK (\cF) \subseteq R_\KK (S)$ can be written as
$X=V-W$ for two $\KK$-linear representations that are both $\cF$-stable. This implies that for every $X\in R_{\KK} (\cF)$, we have $\Dim (X) \in C (\cF)$ since $\Dim (X) (P)=\Dim (X) (Q)$ for every $\cF$-conjugate subgroups $P$ and $Q$ in $S$. Hence we conclude the following:

\begin{lemma} Let $\cF$ be a saturated fusion system on a $p$-group $S$. Then the dimension homomorphism $\Dim$ for $S$ induces a dimension homomorphism
\[ \Dim \colon R_\KK (\cF) \to C(\cF) \]
for the fusion system $\cF$.
\end{lemma}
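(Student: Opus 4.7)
The plan is to obtain this lemma essentially for free from the two facts already established in the excerpt: (1) $\Dim\colon R_\KK(-)\to C(-)$ is a natural transformation of biset functors, and (2) for any biset functor $M$ defined on $S$, the $\cF$-stable submodule $M(\cF)$ is characterized by the equations $\res_\varphi X = \res_P X$ for all $P\leq S$ and $\varphi\in\cF(P,S)$ (Definition \ref{defFstable}). With those in hand, the content of the lemma is that a natural transformation of biset functors sends $\cF$-stable elements to $\cF$-stable elements.

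First I would simply unwind the definitions. Let $X\in R_\KK(\cF)\subseteq R_\KK(S)$, and fix any $P\leq S$ together with a morphism $\varphi\in\cF(P,S)$. The $\cF$-stability of $X$ means $\res_\varphi X = \res_P^S X$ in $R_\KK(P)$. Applying the dimension homomorphism for $P$ and using naturality of $\Dim$ with respect to the bisets $[P,\varphi]_P^S$ and $[P,\incl]_P^S$ respectively, we get
\[
\res_\varphi \Dim(X) = \Dim(\res_\varphi X) = \Dim(\res_P^S X) = \res_P^S \Dim(X)
\]
in $C(P)$. Since this holds for every $P$ and every $\varphi\in\cF(P,S)$, the super class function $\Dim(X)$ lies in $C(\cF)$ by Definition \ref{defFstable} (which, by the preceding lemma identifying $C(\cF)=C'(\cF)$, is indeed the correct notion of $\cF$-stable super class function).

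The alternative, more elementary route is to use Proposition \ref{propVirtualFstableReps} to write $X=V-W$ with $V,W$ actual $\cF$-stable representations, and then note that if $P,Q\leq S$ are $\cF$-conjugate via some $\varphi\in\cF(P,Q)$, the identity $\res_\varphi V \cong \res_P V$ gives a $\KK$-linear isomorphism $V^Q \xrightarrow{\cong} V^P$, so that $\dim_\KK V^P = \dim_\KK V^Q$, and likewise for $W$; hence $\Dim(X)(P)=\Dim(X)(Q)$. This is the calculation already invoked informally in the paragraph immediately preceding the lemma. Either argument is short and completely routine — there is no real obstacle here, since the main substance (naturality of $\Dim$ and the biset-functor description of $R_\KK(\cF)$) has already been done in the previous two results.
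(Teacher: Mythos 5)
Your proposal is correct, and your ``alternative, more elementary route'' is exactly the paper's argument: the paper proves the lemma in the paragraph preceding its statement by invoking Proposition \ref{propVirtualFstableReps} to write $X=V-W$ with $V,W$ actual $\cF$-stable representations and observing that $\cF$-conjugate subgroups then have equal fixed-point dimensions. Your first route via naturality of $\Dim$ and the identification $C(\cF)=C'(\cF)$ is a formally equivalent repackaging of the same facts, so there is nothing further to add.
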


For the rest of this section we assume $\KK=\RR$, the field of real numbers. The dimension function of a real representation of a finite group $G$ satisfies certain relations called Artin relations. These relations come from the fact that for a real represention $V$, the dimension of the fixed point set $V^G$ is determined by the dimensions of the fixed point sets $V^C$ for cyclic subgroups $C \leq G$ (see \cite{DovermannPetrie}*{Theorem 0.1}). In particular, the homomorphism $\Dim$ is not surjective in general. It is easy to see that in general $\Dim$ is not injective either. For example, when $G=C_p$, the cyclic group of order $p$ where $p\geq 5$, all two dimensional irreducible real representations of $G$ have the same dimension function ($f(1)=2$ and $f(C_p)=0$).

For a finite nilpotent group $G$, it is possible to explain the image of the homomorphism $\Dim \colon R_{\RR} (G) \to C(G)$ explicitly as the set of super class functions satisfying certain conditions. These conditions are called Borel-Smith conditions and the functions satisfying these conditions are called Borel-Smith functions (see \cite{BoucYalcin}*{Def. 3.1} or \cite{tomDieckTransformation}*{Def. 5.1}).

\begin{definition}
\label{def:BorelSmith}
Let $G$ be finite group. A function $f \in C(G)$ is called a {\it
Borel-Smith function} if it satisfies the following conditions:

\begin{enumerate}
\item\label{def:BorelSmith1} If $L \lhd H \leq G$, $H/L \cong \ZZ /p$, and $p$ is odd,
then $f(L)-f(H)$ is even.

 \item\label{def:BorelSmith2} If $L \lhd H \leq G$, $H/L \cong \ZZ /p \times \ZZ /p$, and $H_i/L$ are all the subgroups of order $p$ in $H/L$, then
     \[f(L)-f(H)=\sum
_{i=0} ^p (f(H_i)-f(H)).\]

\item\label{def:BorelSmith3} If $L \lhd H \lhd N \leq N_G(L)$ and $H/L \cong \ZZ /2$,
then $f(L)-f(H)$ is even if $N/L \cong \ZZ /4$, and $f(L)-f(H)$ is divisible by $4$ if $N/L$ is the quaternion group $Q_8$ of order $8$.
\end{enumerate}
\end{definition}

These conditions were introduced as conditions satisfied by dimension functions of $p$-group actions on mod-$p$ homology spheres and they play an important role understanding $p$-group actions on homotopy spheres (see \cite{DotzelHamrick}).

\begin{remark} The condition $(iii)$ is stated differently in \cite{tomDieckTransformation}*{Def. 5.1} but as it was explained in \cite{BoucYalcin}*{Remark 3.2} we can change this condition to hold only for $Q_8$ because every generalized quaternion group includes a $Q_8$ as a subgroup.
\end{remark}

The set of Borel-Smith functions is an additive subgroup
of $C(G)$ which we denote by $C_b(G)$. For $p$-groups, the assignment $P \to C_b(P)$ is a subfunctor of the biset functor $C$ (see \cite{Bouc}*{Theorem 12.8.7} or \cite{BoucYalcin}*{Proposition 3.7}). The biset functor $C_b$ plays an interesting role in understanding the Dade group of a $p$-group (see \cite{BoucYalcin}*{Theorem 1.2}). We now quote the following result for finite nilpotent groups.

\begin{theorem}[Theorem 5.4 on page 211 of \cite{tomDieckTransformation}]\label{thm:nilpotent} Let $G$ be a finite nilpotent group, and
let~$R_{\RR } (G) $ denote the real representation ring of $G$.
Then, the image of
\[\Dim \colon R_{\RR } (G) \to C(G)\]
is equal to the group of Borel-Smith functions $C_b(G)$.
\end{theorem}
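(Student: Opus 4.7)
The plan is to prove both inclusions separately. For the easier direction $\Dim(R_\RR(G))\subseteq C_b(G)$, since $\Dim$ is a group homomorphism and each Borel--Smith condition is a linear relation in the values $f(K)$, it suffices to verify each condition for an arbitrary actual representation $V$. Each condition concerns a subquotient $H/L$ and involves only the spaces $V^K$ for $L\leq K\leq H$. Replacing $V$ by $V^L$, which is itself a real $(H/L)$-representation, the verification reduces to a small finite list of group computations: condition \ref{def:BorelSmith1} becomes the classical fact that all nontrivial irreducible real representations of $\ZZ/p$ (odd $p$) are $2$-dimensional; condition \ref{def:BorelSmith3} similarly reduces to the $2$- and $4$-dimensionality of faithful real irreducibles of $\ZZ/4$ and $Q_8$; and condition \ref{def:BorelSmith2} follows from the pointwise identity $\dim V^L - \dim V^H = \sum_{i=0}^{p}(\dim V^{H_i}-\dim V^H)$ obtained by decomposing $V^L/V^H$ as a sum of nontrivial $1$-dimensional isotypic components for the elementary abelian quotient $H/L\cong(\ZZ/p)^2$.

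For the reverse containment, I would first reduce to the case of a single $p$-group using nilpotency. Since $G$ is nilpotent, $G\cong\prod_p G_p$ with $G_p$ the Sylow $p$-subgroup, and in this setting every subgroup of $G$ splits as $H=\prod_p H_p$ with $H_p\leq G_p$. Hence $C(G)\cong\bigotimes_p C(G_p)$ as rings, and similarly $R_\RR(G)\cong\bigotimes_p R_\RR(G_p)$ via external tensor products of representations, with $\Dim_G$ identified with the tensor product of the maps $\Dim_{G_p}$. The Borel--Smith conditions, which at a given index only involve a single prime $p$ at a time, factor through this decomposition so that $C_b(G)\cong\bigotimes_p C_b(G_p)$; thus it suffices to show $\Dim\colon R_\RR(P)\to C_b(P)$ is surjective for every finite $p$-group $P$.

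For a $p$-group $P$, I would induct on $|P|$, with $P=1$ trivial. Given $f\in C_b(P)$ nontrivial, subtracting $f(P)$ copies of the trivial representation reduces to the case $f(P)=0$. The strategy is then to realize enough basic Borel--Smith functions explicitly so that $f$ becomes a $\ZZ$-linear combination of dimension functions of real representations. A natural way to produce such building blocks is to consider, for each maximal subgroup $Q\leq P$ (necessarily of index $p$), virtual representations of the form $\ind_Q^P W - \dim(W)\cdot\RR[P/Q] + $ corrections inflated from $P/Q$, using the inductive hypothesis on $Q$ to get $W$. The main obstacle is precisely to show that this patching procedure can be carried out consistently across all subgroups of $P$ simultaneously: one must verify that the numerical relations imposed by the Borel--Smith conditions (particularly \ref{def:BorelSmith2} at elementary abelian quotients, where the Euler-type identity fixes the value at $L$ from the values above, and \ref{def:BorelSmith3} at $\ZZ/4$ and $Q_8$ subquotients, where the divisibility constraint reflects the reality obstruction coming from the Schur index) are exactly what make the obstruction to realizing $f$ over $\RR$ vanish at each stage of the induction.
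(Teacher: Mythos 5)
The paper does not prove this theorem; it is quoted directly from tom Dieck's \emph{Transformation Groups} (Theorem 5.4 there), so there is no in-paper argument to compare against. Your easy direction is fine in outline (condition \ref{def:BorelSmith2} is cleanest if one complexifies $V^L$ and groups the nontrivial characters of $H/L$ by their kernels $H_i/L$), but the reduction from nilpotent $G$ to its Sylow factors contains a genuine error. The map $R_\RR(G_p)\otimes R_\RR(G_q)\to R_\RR(G_p\times G_q)$ given by external tensor product is \emph{not} an isomorphism for real representation rings, and, more to the point, the identification $C_b(G)\cong\bigotimes_p C_b(G_p)$ inside $C(G)\cong\bigotimes_p C(G_p)$ fails. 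A finite-index subgroup in each tensor factor does not cut out, after tensoring, the subgroup of $C(G)$ defined by imposing the Borel--Smith conditions ``one prime at a time.''

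Concretely, take $G=\ZZ/pq$ with $p\neq q$ odd primes, and order the subgroups $1,\ZZ/p,\ZZ/q,\ZZ/pq$. Here $C_b(G)$ is the set of $f\in\ZZ^4$ with all four values congruent mod $2$, while $C_b(\ZZ/p)$ and $C_b(\ZZ/q)$ are each $\{(a,b):a\equiv b\pmod 2\}\subseteq\ZZ^2$ with basis $(1,1),(0,2)$. The function $f$ with $f(\ZZ/pq)=2$ and $f=0$ elsewhere lies in $C_b(G)$, and it is realized as $\Dim\bigl(2\cdot\mathbf 1-U_p-U_q+W\bigr)$ where $U_p,U_q$ are $2$-dimensional real irreducibles with kernels $\ZZ/p,\ZZ/q$ and $W$ is a faithful $2$-dimensional real irreducible; but solving
\[
a\begin{pmatrix}1&1\\1&1\end{pmatrix}+b\begin{pmatrix}0&2\\0&2\end{pmatrix}+c\begin{pmatrix}0&0\\2&2\end{pmatrix}+d\begin{pmatrix}0&0\\0&4\end{pmatrix}=\begin{pmatrix}0&0\\0&2\end{pmatrix}
\]
over $\ZZ$ forces $a=b=c=0$ and $4d=2$, so $f\notin C_b(\ZZ/p)\otimes C_b(\ZZ/q)$. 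The culprit is exactly the faithful real irreducible $W$ of $\ZZ/pq$, which is $2$-dimensional and not an external tensor product of representations of the two factors; your reduction silently assumes every irreducible is a product. So even granting the $p$-group case (which you only sketch, with the key patching step admittedly unproven --- this is where the Dotzel--Hamrick construction or tom Dieck's induction actually does the work), the passage from $p$-groups to arbitrary nilpotent groups needs a different argument than the one you propose, e.g.\ one that keeps track of the non-split real irreducibles of $G=\prod_p G_p$ directly rather than factoring both sides into tensor products.
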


If $\cF$ is a saturated fusion system on a $p$-group $S$, then we can define Borel-Smith functions for $\cF$ as super class functions that are both $\cF$-stable and satisfy the Borel-Smith conditions. Note that the group of Borel-Smith functions for $\cF$ is equal to the group
\[ C_b (\cF):= \{f\in C_b(S)\mid \res_\ph f = \res_P f \text{ for all $P\leq S$ and $\ph\in \cF(P,S)$}\}.
\]
From this we obtain the following surjectivity result.

\begin{prop}\label{p-localSurj} Let $\cF$ be a saturated fusion system on a $p$-group $S$. Then the dimension function in $\ZZ _{(p)}$-coefficients
\[\Dim \colon R_{\RR} (\cF ) _{(p)} \to C_b (\cF)_{(p)} \]
is surjective.
\end{prop}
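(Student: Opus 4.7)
The plan is to realize the statement as a direct application of Proposition \ref{propShortExactBisetFunctors} to a short exact sequence of biset functors that already exists at the level of the ambient $p$-group $S$.

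First I would package Theorem \ref{thm:nilpotent} in biset-functorial form. Since $S$ is a $p$-group (hence nilpotent), Theorem \ref{thm:nilpotent} gives that $\Dim\colon R_{\RR}(P) \to C(P)$ has image exactly $C_b(P)$ for every $p$-group $P$. Combined with the fact that $\Dim$ is a natural transformation of biset functors (the lemma proved just above this proposition) and that $C_b$ is a subfunctor of $C$ on the category of $p$-groups (as recalled in the text from \cite{BoucYalcin}), this yields a short exact sequence
\[0 \to K \to R_{\RR} \xto{\Dim} C_b \to 0\]
of biset functors defined on $p$-groups, where $K := \ker(\Dim)$ is automatically a biset subfunctor of $R_{\RR}$.

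Next I would apply Proposition \ref{propShortExactBisetFunctors} to this sequence with the saturated fusion system $\cF$ on $S$. That proposition yields the short exact sequence
\[0\to K(\cF)_{(p)} \to R_{\RR}(\cF)_{(p)} \xto{\Dim} C_b(\cF)_{(p)} \to 0,\]
and in particular the surjectivity of $\Dim$ in $p$-local coefficients.

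There is no real obstacle here beyond checking the ingredients fit together: that $R_{\RR}$ and $C_b$ are genuinely biset functors on the relevant category (already recorded in the text), that $\Dim$ is a natural transformation of such functors (proved just above), and that $C_b(\cF)$ as defined in this section agrees with the general construction of Definition \ref{defFstable}, which is immediate from the definition. Once these are in place, the surjectivity is a formal consequence of the existence and $\cF$-stability of the characteristic idempotent $\omega_\cF$, exactly as used in Proposition \ref{propShortExactBisetFunctors}: given any $f\in C_b(\cF)_{(p)}$, lift it to some $g\in R_{\RR}(S)_{(p)}$ with $\Dim(g)=f$ (using surjectivity at $S$), then $\omega_\cF \cdot g\in R_{\RR}(\cF)_{(p)}$ is $\cF$-stable and satisfies $\Dim(\omega_\cF\cdot g)=\omega_\cF\cdot f = f$, since $f$ is already $\cF$-stable.
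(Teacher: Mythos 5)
Your proof is correct and is essentially the same as the paper's: the paper's proof is a one-line appeal to Proposition \ref{propShortExactBisetFunctors} together with Theorem \ref{thm:nilpotent}, and you are simply spelling out the implicit short exact sequence $0 \to \ker(\Dim) \to R_{\RR} \to C_b \to 0$ of biset functors on $p$-groups and the idempotent-transfer mechanism that Proposition \ref{propShortExactBisetFunctors} uses internally.
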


\begin{proof} This follows from Proposition \ref{propShortExactBisetFunctors} since $\Dim\colon R_\RR(P)\to C_b(P)$ is surjective for all $P\leq S$ by Theorem \ref{thm:nilpotent}.
\end{proof}

In general the dimension homomorphism $\Dim\colon R_{\RR} (\cF) \to C_b(\cF)$ for fusion systems is not surjective.

\begin{example}
Let $G=\ZZ/p \rtimes (\ZZ/p)^{\times}$ denote the semidirect product where the unit group $(\ZZ /p)^{\times}$ acts on $\ZZ/p$ by multiplication.
Let $\cF$ denote the fusion system on $S=\ZZ/p$ induced by $G$. All $\cF$-stable real representations of $S$ are linear combinations of the trivial representation $1$ and the augmentation representation $I_S$. This implies that all super class functions in the image of $\Dim$ must satisfy $f(1)-f(S)\equiv 0\pmod{p-1}$. But $C_b(\cF)=C_b(S)$ is formed by super class functions which satisfy $f(1)-f(S)\equiv 0\pmod 2$. Hence the homomorphism $\Dim$ is not surjective.
\end{example}

\section{Bauer's surjectivity theorem}
\label{sect:Bauer}

To obtain a surjectivity theorem for dimension homomorphism in integer coefficients we need to add new conditions to the Borel-Smith conditions. S. Bauer \cite{Bauer}*{Theorem 1.3} proved that under an additional condition there is a surjectivity result similar to Theorem \ref{thm:nilpotent} for the dimension functions defined on prime power subgroups of any finite group. Bauer considers the following situation:

Let $G$ be a finite group and let $\cP$ denote the family of all subgroups of $G$ with prime power order. Let $\cD _{\cP}(G)$ denote the group of functions $f \colon \cP \to \ZZ$, constant on conjugacy classes, which satisfy the Borel-Smith conditions $(i)$-$(iii)$ on Sylow subgroups and also satisfy the following additional condition:

\begin{enumerate}\setcounter{enumi}{3}
\item\label{def:BorelSmith4} Let $p$ and $q$ be prime numbers, and let $L \lhd H \lhd M \leq N_G(L) $ be subgroups of $G$
such that $H/L\cong \ZZ /p $ and $H$ a $p$-group. Then $f(L)\equiv f(H) \pmod{q^{r-l}}$ if $M/H\cong \ZZ /q^r $ acts on $H/L$ with kernel of prime power order $q^l$.
\end{enumerate}

The Borel-Smith conditions \ref{def:BorelSmith1}-\ref{def:BorelSmith3} together with this last condition \ref{def:BorelSmith4} are all satisfied by the dimension function of an equivariant $\ZZ/\abs G$-homology sphere (see \cite{Bauer}*{Proposition 1.2}). Note that a topological space $X$ is called a $\ZZ/\abs G$-homology sphere if its homology in $\ZZ/\abs G$-coefficients is isomorphic to the homology of a sphere with the same coefficients.  Note that if $X$ is an equivariant $\ZZ/\abs G$-homology sphere, then for every $p$-subgroup $H$ of $G$, the fixed point set $X^H$ is a $\ZZ/p$-homology sphere of dimension $\underline{n}(H)$. The function $\underline{n} \colon \cP \to \ZZ$ is constant on $G$-conjugacy classes of subgroups in $\cP$, and we define the dimension function of $X$ to be the function $\Dim _{\cP} X \colon \cP\to \ZZ$ defined by $(\Dim _{\cP } X) (H)=\underline n(H)+1$ for all $H \in \cP$.

For a real representation $V$ of $G$, let $\Dim _{\cP} V\colon \cP \to \ZZ$ denote the function defined by $(\Dim _{\cP} V) (H)=\dim _{\RR} V^H$ on subgroups $H \in \cP$. Note that $\Dim _{\cP} V$ is equal to the dimension function of the unit sphere $X=S(V)$. Since $X=S(V)$ is a finite $G$-CW-complex that is a homology sphere,
the function  satisfies all the conditions \ref{def:BorelSmith1}-\ref{def:BorelSmith4} by \cite{Bauer}*{Proposition 1.2}. Hence we have $\Dim _{\cP} V \in D_{\cP} (G)$. Bauer proves that the image of $\Dim _{\cP}$ is exactly equal to $\cD _{\cP}(G)$.

\begin{theorem}[Bauer \cite{Bauer}*{Theorem 1.3}]\label{thm:BauerForGroups}
Let $G$ be a finite group, and let $\Dim _{\cP}$ be the dimension homomorphism defined above. Then $\Dim_{\cP} \colon R_{\RR} (G) \to  {\mathcal D}_{\mathcal P} (G)$ is surjective.
\end{theorem}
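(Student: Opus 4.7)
The plan is to reduce surjectivity of $\Dim_{\cP}$ to local realization at each prime $p\mid\abs{G}$ (provided by the $p$-local surjectivity already established) and then to show that condition~\ref{def:BorelSmith4} is exactly what is needed to patch the local data into a single element of $R_\RR(G)$.

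I would first verify the necessity direction to understand the role of condition~\ref{def:BorelSmith4}. For a real $G$-representation $V$ and a chain $L\lhd H\lhd M$ as in the condition, the non-trivial $H/L$-isotypic summand of $V^L$ carries a commuting action of $M$, and therefore a faithful real action of the cyclic $q$-group $M/\ker(M\to \Aut(H/L))$ of order $q^{r-l}$, permuting the isotypic blocks. Over $\RR$, every non-trivial irreducible representation of a non-trivial cyclic $q$-group has dimension divisible by the group order, which forces $\dim V^L-\dim V^H\equiv 0\pmod{q^{r-l}}$. This pinpoints the arithmetic obstruction that the last condition is designed to record.

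For surjectivity, given $f\in\cD_{\cP}(G)$, I would proceed in two stages. First, for each prime $p\mid\abs{G}$, fix a Sylow $p$-subgroup $S_p$; the restriction $f|_{S_p}$ is an $\cF_{S_p}(G)$-stable Borel-Smith function, so Proposition~\ref{p-localSurj} produces $V_p\in R_\RR(\cF_{S_p}(G))$ together with an integer $n_p$ coprime to $p$ such that $\Dim V_p = n_p \cdot f|_{S_p}$. Setting $W_p:=\ind_{S_p}^G V_p\in R_\RR(G)$ and choosing integers $a_p$ by the Chinese remainder theorem so that $a_p n_p\equiv 1\pmod{\abs{G}_p}$ and $a_p\equiv 0\pmod{\abs{G}_{p'}}$, where $\abs{G}_p$ and $\abs{G}_{p'}$ denote the $p$- and $p'$-parts of $\abs{G}$, the combination $V_0:=\sum_p a_p W_p$ already recovers $f$ modulo $\abs{G}_p$ on every $p$-subgroup; the contribution of $W_q$ to a $p$-subgroup, for $q\neq p$, is computable by the Mackey formula applied to the $\cF_{S_q}(G)$-stable representative $V_q$, and its effect on $\Dim_{\cP}$ is governed precisely by the normalizer towers appearing in condition~\ref{def:BorelSmith4}.

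The main obstacle will be the second stage: removing the residual discrepancy $f-\Dim_{\cP}V_0$, and showing that condition~\ref{def:BorelSmith4} is not only necessary but sufficient. Here I would follow Bauer's inductive construction in \cite{Bauer}: order the $G$-conjugacy classes of subgroups in $\cP$ by a refinement of inclusion (handling maximal classes first) and, proceeding class by class, cancel the remaining error by adding a virtual representation of the form $\ind_C^G \chi$ with $C$ a cyclic subgroup of prime-power order and $\chi$ a one-dimensional real character chosen to adjust $\Dim_{\cP}$ only at the current class and above. At each step the integrality of the required coefficient is exactly the congruence imposed by~\ref{def:BorelSmith4}, while the vanishing of the contribution at classes already treated is automatic because such inductions only affect $\Dim_{\cP}$ on subgroups $G$-subconjugate to $C$. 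Iterating yields $V\in R_\RR(G)$ with $\Dim_{\cP} V=f$.
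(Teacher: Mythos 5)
The paper does not prove this theorem --- it is quoted directly from Bauer \cite{Bauer} with a citation, and the paper only sketches Bauer's argument later, inside the proof of Theorem~\ref{thm:main}. That sketch shows Bauer's induction runs \emph{bottom-up}: one starts from the trivial subgroup and adds one cyclic conjugacy class at a time to a family $\cH$ closed under subconjugacy, for each prime $q$ realizing some multiple $n_q f$ (with $n_q$ coprime to $q$) over the enlarged family, and then combining via a B\'ezout identity. You run the induction top-down (``maximal classes first''), and that change is what causes your argument to break.

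The decisive gap is the claim that $\ind_C^G\chi$ only affects $\Dim_{\cP}$ on subgroups $G$-subconjugate to $C$. This is false: free-orbit contributions arise at essentially every subgroup that intersects the conjugates of $C$ trivially. For instance, take $G=C_2\times C_3$, $C=C_2$, and $\chi$ the non-trivial one-dimensional real character of $C$. For $H=C_3$, which is not subconjugate to $C$, the Mackey formula gives $\res^G_C \CC[G/H]\cong\CC[C]$ (a free $C$-set since $C\cap H=1$), so $\dim(\ind_C^G\chi)^H=\langle\chi,\CC[C]\rangle=1\neq 0$. So the cancellation step has no control over the classes you have already treated, and the induction does not close. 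Two secondary issues: for $C$ a cyclic $p$-group with $p$ odd there is no non-trivial one-dimensional \emph{real} character $\chi$, so the proposed building blocks mostly do not exist; and the Stage-1 assertion that $V_0=\sum_p a_p W_p$ recovers $f$ modulo $\abs{G}_p$ on every $p$-subgroup is not justified --- by the Mackey formula $\dim(\ind_{S_q}^G V_q)^H$ for $q\neq p$ is a sum over double cosets $S_q g H$ of terms $\dim V_q^{\,S_q\cap{}^{g}H}$, and nothing forces that total into the claimed congruence modulo $\abs{G}_p$.
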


\begin{remark}\label{rem:Bauer} Note that Bauer's theorem gives an answer for Question \ref{ques:main} for virtual representations. To see this, let $S$ be a Sylow $p$-subgroup of a finite group $G$. Given a Borel-Smith function $f\in C_b (S)$ which respects fusion in $G$, then we can define a function $f'\in {\mathcal D}_{\mathcal P} (G)$ by taking $f'(Q)=f(1)$ for every subgroup of order $q^m$ with $q \neq p$. For $p$-subgroups $P \leq G$, we take $f'(P)=f(P^{g})$ where $P^g$ is a conjugate of $P$ which lies in $S$.
If $f$ satisfies the additional condition \ref{def:BorelSmith4} for subgroups of $S$, then by Theorem  \ref{thm:BauerForGroups} there is a virtual representation $V$ of $G$ such that $\Dim _{\cP} V=f'$. The restriction of $V$ to $S$ gives the desired real $S$-representation which respects fusion in $G$.
\end{remark}

To obtain a similar theorem for fusion systems, we need to introduce a version of Bauer's Artin condition for Borel-Smith functions of fusion systems.

\begin{definition}\label{def:FusionCondition} Let $\cF$ be a fusion system on a $p$-group $S$. We say a Borel-Smith function $f\in C_b(S)$ satisfies Bauer's Artin relation if it satisfies the following condition:
\begin{enumerate}
\renewcommand{\theenumi}{$(**)$}\renewcommand{\labelenumi}{\theenumi}
\item\label{def:ArtinCondition} Let $L \lhd H \leq S$ with $H/L\cong \ZZ/p$, and let $\ph \in \Aut _{\cF} (H)$ be an automorphism of $H$ with $\ph(L)=L$ and such that the induced automorphism of $H/L$ has order $m$. Then $f(L)\equiv f(H)\pmod m$.
\end{enumerate}
The group of Borel-Smith functions for $\cF$ which satisfy this extra condition is denoted by $C_{ba} (\cF)$. In the proof of Theorem \ref{thm:main} we only ever use Condition \ref{def:ArtinCondition} when $H$ is cyclic, so we could require \ref{def:ArtinCondition} only for cyclic subgroups and still get the same collection $C_{ba} (\cF)$.
\end{definition}

We first observe that $\cF$-stable representations satisfy this additional condition.

\begin{lemma} Let $V$ be an $\cF$-stable real $S$-representation. Then the dimension function $\Dim V$ satisfies Bauer's Artin condition \ref{def:ArtinCondition} given in Definition \ref{def:FusionCondition}.
\end{lemma}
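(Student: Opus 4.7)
\emph{Proof plan.} The idea is to translate $\cF$-stability into an $\RR$-linear automorphism $\alpha$ of $V$ that twists the $H$-action along $\ph$, observe that $\alpha$ restricts to $V^L$ and permutes the complex isotypic components of $V^L$ according to the action of $\ph$ on $H/L$, and then read the congruence off of the orbits.

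First, since $V$ is $\cF$-stable and $\ph\in\cF(H,S)$, the isomorphism $\res_\ph V\cong \res^S_H V$ of real $H$-representations unpacks as an $\RR$-linear automorphism $\alpha\colon V\to V$ satisfying $\alpha(hv)=\ph(h)\alpha(v)$ for all $h\in H$ and $v\in V$. Because $\ph$ preserves both $L$ and $H$, the short checks $\ell\alpha(v)=\alpha(\ph^{-1}(\ell)v)=\alpha(v)$ for $v\in V^L$, $\ell\in L$, and the analogous one for $H$, show that $\alpha$ restricts to $\RR$-linear automorphisms of $V^L$ and of $V^H$.

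Next I would analyse $V^L$ as a representation of $H/L\cong\ZZ/p$. If $p=2$, then $\Aut(H/L)=1$ and $m=1$, so the congruence is vacuous; assume $p$ is odd. Fix a lift $h_0\in H$ of a generator of $H/L$ and let $k\in(\ZZ/p)^\times$ be the exponent with $\ph(h_0)\equiv h_0^k\pmod L$, so that $k$ has order $m$. Complexify and decompose into isotypic components
\[V^L\otimes_\RR\CC=\bigoplus_{j=0}^{p-1}W_j,\]
where $h_0$ acts on $W_j$ as multiplication by $\zeta^j$ with $\zeta=e^{2\pi i/p}$. Then $W_0=V^H\otimes_\RR\CC$, so $\dim_\CC W_0=\dim_\RR V^H$. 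For $v\in W_j$, using $\ph^{-1}(h_0)\equiv h_0^{k^{-1}}\pmod L$ and the fact that $L$ acts trivially on $V^L$, we get
\[h_0\alpha(v)=\alpha(\ph^{-1}(h_0)v)=\zeta^{jk^{-1}}\alpha(v),\]
so the $\CC$-linear extension of $\alpha$ sends $W_j$ isomorphically to $W_{jk^{-1}}$.

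Finally, multiplication by $k$ on the group $(\ZZ/p)^\times$ is free, so the $\langle k\rangle$-orbits on $\{0,1,\ldots,p-1\}$ consist of the singleton $\{0\}$ together with orbits of size exactly $m$ on $(\ZZ/p)^\times$. Since $\dim_\CC W_j$ is constant on each such orbit (via the isomorphisms induced by $\alpha$), the quantity
\[(\Dim V)(L)-(\Dim V)(H)=\dim_\RR V^L-\dim_\RR V^H=\sum_{j=1}^{p-1}\dim_\CC W_j\]
is a sum of contributions, one per non-trivial orbit, each equal to $m$ times a common dimension; hence it is divisible by $m$, which is condition \ref{def:ArtinCondition}. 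I do not foresee a serious obstacle; the only delicate point is keeping straight that the permutation of $\{0,1,\ldots,p-1\}$ induced by $\alpha$ is multiplication by $k^{-1}$, so that its non-trivial orbits really have size $m$, but this falls out of the direct computation above.
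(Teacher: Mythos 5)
Your proof is correct, and at its core it proves the same statement in the same way as the paper: both arguments reduce to the observation that $\cF$-stability forces the multiplicities of the nontrivial $H/L$-characters in $\res^S_H V$ (your $\dim_\CC W_j$) to be constant on the orbits of the subgroup $\langle k\rangle\leq(\ZZ/p)^\times$ generated by the induced action of $\ph$, and these orbits all have size exactly $m$ because the action is free. The difference is purely in presentation: the paper packages these multiplicities as inner products $\langle \res^S_H V, \chi^i\rangle$ with the characters appearing in $I_{H/L}=\sum_{i\in(\ZZ/p)^\times}\chi^i$ and uses the change-of-variable identity $\langle \ph^*U,\chi^i\rangle=\langle U,(\ph^{-1})^*\chi^i\rangle$, whereas you unpack the isomorphism $\res_\ph V\cong\res^S_H V$ into an explicit intertwiner $\alpha$ and track how it permutes weight spaces. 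Your version is more hands-on and self-contained; the paper's is more compact by leaning on standard character-theoretic bookkeeping. Both are valid, and neither requires an idea absent from the other.
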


\begin{proof} Let $V$ be an $\cF$-stable real $S$-representation and let $L \lhd H \leq S$ and $\ph \in \Aut _{\cF} (H)$ be as in \ref{def:ArtinCondition}. If $f=\Dim V$, then
$f(L)-f(H)=\langle \res^S _H V, I_{H/L} \rangle$ where $I_{H/L}=\RR[H/L]-\RR[H/H]$ is the augmentation module of the quotient group $H/L$, considered as an $H$-module via quotient map $H \to H/L$. Here the inner product is the inner product of the complexifications of the given real representations. Since $H/L\cong \ZZ/p$, there is a one-dimensional character
$\chi\colon H \to \CC ^{\times}$ with kernel $L$ such that
\[I_{H/L}=\sum _{i \in (\ZZ /p)^{\times} } \chi ^{i} \]
where $\chi ^{i} (h) =\chi(h)^i=\chi (h^i )$ for all $h\in H$. Hence we can write
\[f(L)-f(H) =\sum_{i \in (\ZZ /p)^{\times} } \langle \res^S _H V, \chi ^i \rangle.\]
Since $V$ is $\cF$-stable, we have $\varphi ^* \res ^S _H V=\res ^S _H V$ which gives that
\[\langle \res^S _H V, \chi ^i \rangle =\langle \res ^S _H V, (\varphi^{-1} ) ^* \chi ^i \rangle\]
for every $i \in (\ZZ /p )^{\times}$. Let $J\leq (\ZZ/p) ^{\times}$ denote the cyclic subgroup of order $m$ generated by the image of $\varphi$ in $(\ZZ/p)^{\times}=\Aut(\ZZ/p)$. For every $j \in J$ and $i\in (\ZZ /p)^{\times}$, we have $\langle \res^S _H V, \chi ^i \rangle =\langle \res ^S _H V, \chi ^{ij} \rangle$, hence $f(L)-f(H)$ is divisible by $m$.
\end{proof}

The main theorem of this section is the following result:

\begin{theorem}\label{thm:main} Let $\cF$ be a saturated fusion system on a $p$-group $S$. Then
\[\Dim \colon R_{\RR} (\cF ) \to C_{ba} (\cF) \]
is surjective.
\end{theorem}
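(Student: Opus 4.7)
The plan is to combine Proposition \ref{p-localSurj} with Bauer's theorem for finite groups (Theorem \ref{thm:BauerForGroups}). The introduction signals that the authors use ``Bauer's theorem and its proof,'' so I expect the argument to both invoke the statement of Bauer's theorem in the realizable case and, in cases not directly accessible to it, to adapt its argument to the fusion system setting.

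First, given $f \in C_{ba}(\cF)$, Proposition \ref{p-localSurj} yields an $\cF$-stable virtual $S$-representation $W \in R_\RR(\cF)$ together with an integer $N$ prime to $p$ such that $\Dim W = N f$. The task is therefore to ``divide'' by $N$ integrally, producing some $V \in R_\RR(\cF)$ with $\Dim V = f$.

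In the realizable case where $\cF = \cF_S(G)$ for some finite group $G$ with $S$ a Sylow $p$-subgroup, I would proceed directly as in Remark \ref{rem:Bauer}: extend $f$ to a super class function $\tilde f$ on the prime-power subgroups of $G$ by setting $\tilde f(P) := f(P^g)$ when $P^g \leq S$ is a $p$-subgroup, and $\tilde f(Q) := f(1)$ for subgroups of order prime to $p$. The key point is that Bauer's condition \ref{def:BorelSmith4} for $\tilde f$ under $G$-conjugation translates (via the equality $\Aut_\cF(H) = \Aut_G(H)$ for $H \leq S$) into condition \ref{def:ArtinCondition} for $f$ under $\cF$-automorphisms; hence $\tilde f \in \cD_\cP(G)$. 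Theorem \ref{thm:BauerForGroups} then produces a virtual real $G$-representation $V_G$ with $\Dim_\cP V_G = \tilde f$, and restriction along $S \hookrightarrow G$ gives an $\cF$-stable virtual $S$-representation with dimension function $f$, as desired.

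The main obstacle is that not every saturated fusion system is realizable by a finite group, so for exotic $\cF$ one must imitate Bauer's proof directly rather than use it as a black box. Bauer's argument proceeds by induction on the structure of normal pairs $L \lhd H$ and involves explicit constructions with induced characters; in the fusion system setting I would replace ``induction from a subgroup'' with ``stabilization via the characteristic idempotent $\omega_\cF$,'' using the transfer $\tr^\cF_S$ of Proposition \ref{propIdempotentBisetTransfer} to force $\cF$-stability at each stage. The short-exact-sequence result Proposition \ref{propShortExactBisetFunctors} and the biset-functor structures on $R_\RR(-)$ and $C(-)$ then provide the framework for assembling the prime-local data and removing the $N$-denominator integrally, producing the desired $V \in R_\RR(\cF)$ with $\Dim V = f$. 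I expect the bulk of the technical effort to lie precisely in adapting Bauer's inductive step to the exotic case while tracking the interaction of $\omega_\cF$ with the Galois action from Definition \ref{defGaloisAction}.
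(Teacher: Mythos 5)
There is a genuine gap. Your framing of the problem as "dividing by $N$" after applying Proposition \ref{p-localSurj} doesn't match how the paper actually proceeds, and more importantly your strategy for the exotic case is missing the central ideas. The paper's proof does not try to denominator-clear a single $p$-local lift; instead it runs an induction over families of cyclic subgroups $\cH\subset\cH'$ and, at each step, shows that for \emph{every} prime $q$ there is an integer $n_q$ coprime to $q$ with $n_q f$ realizable over $\cH'$. A Bezout argument over the finitely many primes then yields $f$ itself. Proposition \ref{p-localSurj} is used only for $q=p$. For $q\neq p$ the argument does \emph{not} adapt Bauer's construction abstractly via $\omega_\cF$; rather, it invokes Park's theorem (\cite{Park}*{Theorem 1}) that \emph{every} saturated fusion system $\cF$ is realized as $\cF=\cF_S(\Gamma)$ for some finite group $\Gamma$ containing $S$ (not as a Sylow subgroup!), and then applies Bauer's explicit construction of a character $V_q$ inside $\Gamma$. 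Condition \ref{def:ArtinCondition} is exactly what guarantees that Bauer's hypothesis $f(H)\equiv 0 \pmod{q^{r-l}}$ holds for the relevant $H\lhd K\leq\Gamma$.

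Your claim that "not every saturated fusion system is realizable by a finite group" is incorrect in light of Park's theorem; the obstruction to applying Bauer's theorem as a black box is not realizability but the fact that $S$ need not be a Sylow $p$-subgroup of $\Gamma$, so a Borel-Smith function on $S$ does not extend to one on the Sylow of $\Gamma$ (the paper makes exactly this remark after the theorem). Your suggestion to replace "induction from a subgroup" with stabilization via the characteristic idempotent $\tr^\cF_S$ is, as stated, only a heuristic: it doesn't explain how to build, at each inductive step, a virtual representation whose dimension function vanishes on the smaller family $\cH$ and has the prescribed $q$-adic divisibility on the new conjugacy class. The combination of Park's realization, Bauer's explicit $V_q$-construction inside $\Gamma$, the reduction to cyclic subgroups via Borel-Smith condition \ref{def:BorelSmith2}, and the final Bezout step is the actual content of the proof and is absent from your proposal.
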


\begin{proof} We will follow Bauer's argument given in the proof of \cite{Bauer}*{Theorem 1.3}. First note that a Borel-Smith function is uniquely determined by its values on cyclic subgroups of $S$. This is because if $P$ has a normal subgroup $N \leq G$ such that $P/N$ is isomorphic to $\ZZ/p\times \ZZ /p$, then by condition \ref{def:BorelSmith2} of the Borel-Smith conditions, the value of a Borel-Smith function $f$ at $P$ is determined by its  values on proper subgroups $Q <P$.  When $P$ is a noncyclic subgroup, the existence of a normal subgroup $N\leq P$ such that $P/N$ is isomorphic to $\ZZ/p\times \ZZ /p$ follows from the Burnside basis theorem, \cite{Suzuki}*{Theorem 1.16}.

Let $f \in C_{ba} (\cF)$. We will show that there is a virtual representation $x\in R_{\RR} (\cF)$ such that $\Dim (x) (H)=f(H)$ for all cyclic subgroups $H \leq S$. In this case we say $f$ is realized over the family $\cH _{cyc}$ of all cyclic subgroups in $S$.

Note that $f$ is realizable at the trivial subgroup $1 \leq S$ since we can take $f(1)$ many copies of the trivial representation, and then it will realize $f$ at $1$. Suppose that $f$ is realized over some nonempty family $\cH$ of cyclic subgroups of $S$. When we say a family of subgroups, we always mean that $\cH$ is closed under conjugation and taking subgroups: if $K \in \cH$ and $L$ is conjugate to a subgroup of $K$, then $L\in \cH$. Let $\cH'$ be an adjacent family of $\cH$, a family obtained from $\cH$ by adding the conjugacy class of a cyclic subgroup $H \leq S$. We will show that $f$ is realizable also over $\cH'$. By induction this will give us the realizability of $f$ over all cyclic subgroups.

Since $f$ is realizable over $\cH$, there is an element $x\in R_{\RR} (\cF)$ such that $\Dim (x) (J)=f(J)$ for every $J \in \cH$. By replacing $f$ with $f-\Dim (x)$, we assume that $f(J)=0$ for every $J\in \cH$. To prove that $f$ is realizable over the larger family $\cH'$, we will show that for every prime $q$, there is an integer $n_q$ coprime to $q$ such that $n_qf$ is realizable over $\cH'$ by some virtual representation $x_q \in R_{\RR } (\cF)$. This will be enough by the following argument: If $q_1, \dots, q_t$ are prime divisors of $n_p$, then $n_p, n_{q_1}, \dots, n_{q_t}$ have no common divisors, so we can find integers $m_0,\dots , m_t$
such that $m_0n_p+m_1 n_{q_1}+\dots + m_t n_{q_t}=1$. Using these integers, we obtain that $x=m_0 x_p+m_1x_{q_1}+\dots + m_t x_{q_t}$ realizes $f$ over the family $\cH'$.

If $q=p$, then by Proposition \ref{p-localSurj}, there is an integer $n_p$, coprime to $p$, such that $n_p f$ is realized by an element in $x_p \in R_{\RR} (\cF)$. So, assume now that $q$ is a fixed prime such that $q \neq p$. By \cite{Park}*{Theorem 1}, there is a finite group $\Gamma$ such that $S \leq \Gamma$ and the fusion system $\cF _S (\Gamma)$ induced by conjugations in $\Gamma$, is equal to the fusion system $\cF$. In particular, for every $p$-group $P \leq S$, we have $N_{\Gamma} (P)/C_{\Gamma} (P) \cong \Aut _{\cF} (P)$.

Let $H$ be a cyclic subgroup in $\cH' \backslash \cH$, and let $L\leq H$ be the maximal proper subgroup of $H$. To show that there is an integer $n_q$ such that $n_q f$ is realizable over $\cH'$, we can use the construction of a virtual representation $x_q$ given in \cite{Bauer}, specifically the second case in Bauer's proof of Theorem 1.3 that constructs the representation $V_q$, which we denote $x_q$ instead. Note that for this construction to work, we need to show Bauer's condition that $f(H)\equiv 0\pmod{q^{r-l}}$ if there is a $H\lhd K\leq \Gamma$ such that $K/H\cong \ZZ /q^r$ is acting on $H/L\cong \ZZ/p$ with kernel of order $q^l$. Suppose we have such a subgroup $K$.  Then $K/H$ induces a cyclic subgroup of $\Aut(H/L)\cong \Aut(\ZZ/p)$ of size $q^{r-l}$, and we can choose an element $k\in K$ that induces an automorphism of $H/L$ of order $q^{r-l}$. We have $\cF=\cF_S(\Gamma)$, so conjugation by $k\in K$ gives an automorphism $c_k\in \Aut_\cF(H)$ such that $c_k$ induces an automorphism of $H/L$ of order $q^{r-l}$. By Condition \ref{def:ArtinCondition} in Definition \ref{def:FusionCondition}, we then have $f(H)\equiv 0\pmod{q^{r-l}}$ as wanted. Hence we can apply Bauer's contruction of $x_q$ for primes $q\neq p$. This completes the proof.
\end{proof}

\begin{remark}  Note that in the above proof we could not apply Bauer's theorem directly to the group $\Gamma$ that realizes the fusion system on $S$. This is because $\Gamma$ often has a Sylow $p$-subgroup much bigger than $S$ and it is not clear if a given Borel-Smith function defined on $S$ can be extended to a Borel-Smith function defined on the Sylow $p$-subgroup of $\Gamma$.
\end{remark}

\begin{example} Let $\cF=\cF _S(G)$ with $G=A_4$ and $S=C_2\times C_2$, then the group $\Gamma$ constructed in \cite{Park} is isomorphic to the semidirect product $(S\times S \times S) \rtimes \Sigma _3$. The Sylow $2$-subgroup of $\Gamma$ is the group $T=(S\times S \times S) \rtimes C_2$ and $S$ is imbedded into $T$ by the map $s \to (s, \varphi (s), \varphi ^2 (s))$ where $\varphi \colon S \to S$ is the automorphism of $S$ induced by $G/S\cong C_3$ acting on $S$.
Note that if we extend a Borel-Smith function $f$ defined in $C_{b}(S)$ to a function defined on subgroups of $T$ by taking $f(H)=0$ for all $H$ which is not subconjugate to $S$, then such a super class function would not be a Borel-Smith function in general (for example, when $f(S)\neq 0$).
\end{example}

\section{Realizing monotone Borel-Smith functions}
\label{sect:monotone}

Let $\cF$ be a saturated fusion system on a $p$-group $S$. A class function $f \in C(\cF)$
is said to be \emph{monotone} if for every $K\leq H\leq S$, we have $f (K )\geq f(H) \geq 0$. The main purpose of this section is to prove the following theorem.

\begin{theorem}\label{thm:monotone} For every monotone Borel-Smith function $f \in C_b(\cF)$, there exists an integer $N \geq 1$ and an $\cF$-stable rational $S$-representation $V$ such that $\Dim V=Nf$.
\end{theorem}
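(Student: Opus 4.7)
\medskip

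\noindent\textbf{Plan.}
The plan decomposes the proof into two tasks: (i) find an actual rational $S$-representation (not necessarily $\cF$-stable) realizing a positive multiple $M_1 f$, and (ii) transfer this to an actual $\cF$-stable representation, at the cost of a further multiplicative factor.

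For task (i), the main ingredient is the Ritter--Segal theorem for $p$-groups: the linearization map $A(S)\to R_\QQ(S)$, $[S/H]\mapsto \QQ[S/H]$, is surjective, so the image of $\Dim\colon R_\QQ(S)\to C(S)$ is the $\ZZ$-span of the super class functions $K\mapsto \abs{K\backslash S/H}$ for $H\leq S$. Each such generator is monotone in $K$ and satisfies condition \ref{def:BorelSmith2} of Definition \ref{def:BorelSmith}, via a short M\"obius-inversion computation on the Klein-four lattice between $L$ and $H$ when $H/L\cong(\ZZ/p)^2$. A rank count (the $\QQ$-rank of the image of $\Dim$ equals the number of conjugacy classes of cyclic subgroups of $S$, matching the $\QQ$-rank of the subspace of $C(S)\otimes\QQ$ cut out by condition \ref{def:BorelSmith2}) shows that these two $\QQ$-subspaces agree; hence for any $f$ satisfying condition \ref{def:BorelSmith2}, some multiple $M_1 f$ lies in the image of $\Dim$. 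Monotonicity of $f$ then permits taking the witnessing representation actual rather than virtual, by an inductive construction across the subgroup lattice from largest to smallest subgroup: the inequalities $f(K)\geq f(H)\geq 0$ translate into non-negativity of the multiplicities of the simple rational representations introduced at each stage, with a uniform $M_1$ depending only on $S$ clearing divisibility issues.

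For task (ii), apply the characteristic idempotent $\omega_\cF\in A(S,S)_{(p)}$. By Proposition \ref{propIdempotentBisetTransfer} and the naturality of $\Dim$ as a biset functor homomorphism, $\omega_\cF\cdot V\in R_\QQ(\cF)_{(p)}$ is a virtual $\cF$-stable rational representation whose dimension function equals $\omega_\cF\cdot M_1 f = M_1 f$, since $\omega_\cF$ acts as the identity on the $\cF$-stable function $f$. Choosing $L$ coprime to $p$ so that $L\omega_\cF\in A(S,S)$ yields a virtual $\cF$-stable rational representation $W:=L\omega_\cF\cdot V\in R_\QQ(\cF)$ with $\Dim W = LM_1 f$. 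By Proposition \ref{propVirtualFstableReps}, write $W=W_+ - W_-$ with $W_\pm\in R_\QQ^+(\cF)$. To arrive at an actual $\cF$-stable representation whose dimension function is proportional to $f$, add a sufficiently large actual $\cF$-stable representation $U$ realizing $M_2 f$ (itself produced by the same construction applied to a multiple of $f$), with $M_2$ large enough that $U$ contains $W_-$ as a subrepresentation; then $U + W$ is an actual $\cF$-stable rational $S$-representation with dimension function $(LM_1 + M_2)f = Nf$.

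The principal obstacle is the absorption step at the end, which requires producing an actual $\cF$-stable positive representation large enough to dominate $W_-$. The key point is that $W_-$ is controlled uniformly in $f$: the denominators appearing in $L\omega_\cF$ depend only on $\cF$ and $S$, and hence so does the size of $W_-$ relative to $\Dim W$. This bounded dependence, combined with the fact that the construction of step (i) can be iterated to produce actual $\cF$-stable representations with dimension function $M f$ for any multiple $M$ divisible by the universal factor $M_1$, makes it possible to choose $M_2$ independently of $f$. The uniformity of $N$ then follows because every factor in the construction ($M_1$, $L$, and $M_2$) depends only on $\cF$ and $S$, not on the Borel--Smith function $f$.
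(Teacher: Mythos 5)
Your proof takes a genuinely different route from the paper's, but it has two gaps, one of which is serious, and it misses the observation that makes the paper's proof short.

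The paper's proof is: by Dotzel--Hamrick there is an actual \emph{real} $S$-representation $V$ with $\Dim V=f$; applying the Galois transfer of Definition~\ref{defGaloisAction} and clearing the Schur index gives an actual \emph{rational} $S$-representation $W$ with $\Dim W=Nf$; and then $W$ is automatically $\cF$-stable, because $\Dim$ is injective on rational representation rings (Lemma~\ref{lem:detection}) and $Nf$ is $\cF$-stable, so $\res_\ph W$ and $\res_Q W$ have the same dimension function and hence are isomorphic. That last observation is the whole point: once you land in $R_\QQ(S)$ with an $\cF$-stable dimension function, $\cF$-stability of the representation is free. Your entire task~(ii) --- the characteristic idempotent, clearing denominators, splitting $W=W_+-W_-$, and the absorption argument --- is unnecessary.

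Beyond being unnecessary, task~(ii) as written does not close. The absorption step needs an \emph{actual} $\cF$-stable rational representation $U$ with $\Dim U = M_2 f$ in order to swallow $W_-$, but producing such a $U$ is exactly the statement of the theorem; invoking ``the same construction'' is circular. The claim that $W_-$ is ``controlled uniformly in $f$'' does not break the circle: writing $L\omega_\cF = X_+ - X_-$ with $X_\pm$ actual bisets, the negative part of $L\omega_\cF\cdot V$ can be as large as $X_-\cdot V$, whose dimension scales linearly with $\dim V$, so $W_-$ grows with $f$ rather than staying bounded. Task~(i) also has a gap: the rank count over $\QQ$ only shows that some multiple of $f$ is a \emph{virtual} rational dimension function, and the step from there to an \emph{actual} rational representation realizing $M_1f$ is precisely the nontrivial content of a Dotzel--Hamrick-type theorem, which you assert via an ``inductive construction across the subgroup lattice'' without proof. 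The paper avoids having to reprove this by invoking the known real Dotzel--Hamrick theorem and then Galois-descending, paying only a multiplicative factor $N$. I suggest you adopt that route for task~(i), drop task~(ii) entirely, and replace it with the one-line appeal to Lemma~\ref{lem:detection}.
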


For a $p$-group $S$, it is known that every monotone Borel-Smith function $f\in C_b(S)$ is realizable as the dimension function of a real $S$-representation. This is proved by  Dotzel-Hamrick in \cite{DotzelHamrick} (see also  \cite{tomDieckTransformation}*{Theorem 5.13}). Hence Theorem \ref{thm:monotone} holds for a trivial fusion system without multiplying with a positive integer.

\begin{question}\label{ques:FstableReal} Is every a monotone function $f\in C_{ba}(\cF)$ realizable as the dimension function of an $\cF$-stable real $S$-representation?
\end{question}

We leave this as an open problem. Note that we know from Theorem \ref{thm:main} that $f$ can be realized by a virtual real representation, and Theorem \ref{thm:monotone} says that a multiple $Nf$ is realized by an actual representation. We have so far found that Question \ref{ques:FstableReal} has positive answer for all saturated fusion systems on $C_p$ ($p$ prime) and $D_8$, and for the fusion system induced by $PGL_3(\FF_3)$ on $SD_{16}$.

Note also that Bauer's theorem (Theorem \ref{thm:BauerForGroups}) for general finite groups cannot be refined to realize a monotone function by an actual representation, as shown by the following example.

\begin{example}\label{example:MonotoneBSfunction} Let $G=S_3$ be the symmetric group on 3 elements. In this case $\cP$ is the set of subgroups $\{ 1, C_3, C_2^1,C_2 ^2, C_2 ^3 \}$ where $C_3$ is the subgroup generated by $(123)$ and $C_2^i$ is the subgroup generated by the transposition fixing $i$. Consider the class function $f\colon \cP \to \ZZ$ with values $f(1)=f(C_2^i)=2$, $f(C_3)=0$ for all $i$. The function $f$ satisfies all the Borel-Smith conditions and Bauer's Artin condition \ref{def:BorelSmith4}, hence $f\in \cD _{\cP} (G)$. By Bauer's theorem $f$ is realizable by a virtual representation. In fact, it is realized by the virtual representation $1_G-\sigma+\chi$ where $1_G$ is the 1-dimensional trivial representation, $\sigma$ is the 1-dimensional sign representation with kernel $C_3$, and $\chi$ is the unique 2-dimensional irreducible representation. The function $f$ is a monotone function, but there is no actual real $G$-representation $V$ such that
$\Dim V=f$. This can be easily seen by calculating dimension functions of these irreducible real representations. In fact, for any $N \geq 1$, the function $Nf$ is not realizable by an actual real $G$-representation. On the other hand, it is easy to see that the restrictions of $f$ to Sylow $p$-subgroups of $G$ for $p=2,3$ give monotone functions in $C_{ba} (\cF)$ and these functions are realized by $\cF$-stable real representations. So the function $f$ does not give a counterexample to Question \ref{ques:FstableReal}.
\end{example}

To prove Theorem \ref{thm:monotone}, we use the theorem by Dotzel-Hamrick \cite{DotzelHamrick} and some additional properties of rational representations. One of the key observations for rational representations that we use is the following.

\begin{lemma}\label{lem:detection} Let $G$ be a finite group and $V$ and $W$ be two rational representations of $G$. Then $V\cong W$ if and only if for every cyclic subgroup $C\leq G$, the equality $\dim V^C=\dim W^C$ holds.
\end{lemma}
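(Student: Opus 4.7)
The forward direction is immediate: an isomorphism $V\xrightarrow{\sim} W$ of $\QQ G$-modules restricts to an isomorphism $V^C \xrightarrow{\sim} W^C$ of $\QQ$-vector spaces for every subgroup $C\leq G$.

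For the converse, it suffices to show $\chi_V=\chi_W$, since over a field of characteristic zero two representations are isomorphic if and only if they have the same character. By Frobenius reciprocity,
\[\dim_\QQ V^C \;=\; \dim_\QQ \Hom_{\QQ G}(\QQ[G/C],V) \;=\; \langle \ind_C^G 1_C,\, \chi_V \rangle,\]
so the hypothesis of the lemma translates into
\[\langle \ind_C^G 1_C,\, \chi_V-\chi_W \rangle \;=\; 0\]
for every cyclic subgroup $C\leq G$.

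By Artin's induction theorem (see e.g.\ \cite{Serre}*{Theorem 17}), applied to the virtual rational character $\chi_V-\chi_W$, there exists a positive integer $N$ together with integers $a_C$, indexed by conjugacy classes of cyclic subgroups of $G$, such that
\[N(\chi_V-\chi_W) \;=\; \sum_C a_C \cdot \ind_C^G 1_C.\]
Pairing this identity with $\chi_V-\chi_W$ using the standard inner product on characters, the right-hand side vanishes term by term by the previous display, yielding $N\langle \chi_V-\chi_W,\,\chi_V-\chi_W\rangle=0$ and hence $\chi_V=\chi_W$, as required. The only substantive input is Artin's induction theorem; given that, the argument is the standard non-degeneracy of the Hermitian inner product on characters, and I expect no serious obstacle. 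One point worth flagging is that rationality is genuinely used: the statement fails for complex representations (e.g.\ the two nontrivial one-dimensional complex characters of $C_3$ have identical fixed-point functions), which is precisely why we need to invoke Artin rather than trying to recover the character from fixed points directly.
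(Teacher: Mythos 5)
Your argument is correct and is essentially the standard proof underlying the result the paper simply cites from Serre (Corollary, p.~104). The one point to tighten is the precise citation to Artin's theorem: Serre's Theorem~17 expresses $N\chi$ as an integral combination of characters $\ind_C^G\psi_C$ for $\psi_C$ \emph{arbitrary} characters of cyclic subgroups $C$, not only trivial ones. To obtain the form $N(\chi_V-\chi_W)=\sum_C a_C\,\ind_C^G 1_C$ that your pairing argument actually uses, you need one more small step: either apply the push--pull identity $\ind_C^G(1_C)\cdot\chi=\ind_C^G(\res^G_C\chi)$ to Artin's relation and then use the elementary fact that for cyclic $C$ the ring $R_\QQ(C)$ is spanned over $\ZZ$ by the permutation characters $\ind_D^C 1_D$, $D\leq C$ (unitriangularity over the divisor lattice of $\abs{C}$); or observe directly that the $\ind_C^G 1_C$, over conjugacy classes of cyclic $C$, form a $\QQ$-basis of $\QQ\ox R_\QQ(G)$, since they are linearly independent by unitriangularity of the table of marks and there are exactly as many of them as there are irreducible $\QQ G$-modules. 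With that granted, your pairing computation and the positive-definiteness of the inner product finish the proof exactly as you say, and the $C_3$ side remark is the right example to keep in mind for why rationality is essential.
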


\begin{proof} See Corollary on page 104 in \cite{Serre}.
\end{proof}

This implies, in particular, that the dimension function $\Dim \colon R_{\QQ} (S) \to C_b (S)$ is injective. Another important theorem on rational representations is the
 the Ritter-Segal theorem. We state here a version by Bouc (see \cite{Bouc}*{Section 9.2}). Note that if $G/N$ is a quotient group of $G$, then a $G/N$-representation $V$ can be considered a $G$-representation via the quotient map $G \to G/N$. In this case this $G$-representation is called the inflation of $V$ and it is denoted by $\inf ^G _{G/N} V$.

\begin{theorem}[Ritter-Segal] Let $S$ be a finite $p$-group. If $V$ is a non-trivial simple $\QQ S$-module, then there exist subgroups $Q \leq P$ of $S$, with $\abs{P:Q}=p$, such that
\[V \cong \ind ^S _P \mathrm{inf} ^P _{P/Q} I_{P/Q}\]
where $I_{P/Q}$ is the augmentation ideal of the group algebra $\QQ [P/Q]$.
\end{theorem}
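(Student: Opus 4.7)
The plan is to proceed by strong induction on $|S|$, splitting the inductive step according to whether $V$ is faithful. The base case $|S|=p$ is immediate, since the only non-trivial simple $\QQ S$-module is $I_S$, corresponding to $P=S$, $Q=1$. In the non-faithful inductive step, I set $N:=\ker V$, regard $V$ as a non-trivial simple $\QQ[S/N]$-module, and apply the induction hypothesis on $S/N$ to produce $\bar Q\leq \bar P\leq S/N$ with $|\bar P:\bar Q|=p$ and $V\cong \ind^{S/N}_{\bar P}\inf^{\bar P}_{\bar P/\bar Q} I_{\bar P/\bar Q}$. Lifting to preimages $N\leq Q\leq P\leq S$, I then combine transitivity of inflation with the standard biset identity $\inf^S_{S/N}\circ\ind^{S/N}_{P/N}=\ind^S_P\circ\inf^P_{P/N}$ to rewrite the expression as $V\cong \ind^S_P \inf^P_{P/Q} I_{P/Q}$, as required.

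The faithful case is where the real work lies. My strategy is to produce a proper subgroup $H$ of $S$ together with a simple $\QQ H$-module $W$ such that $V\cong \ind^S_H W$; the induction hypothesis then gives $W\cong \ind^H_P \inf^P_{P/Q} I_{P/Q}$ with $|P:Q|=p$, and transitivity $\ind^S_H\circ\ind^H_P=\ind^S_P$ closes the argument. If $S$ is abelian, faithfulness of $V$ forces $S$ to be cyclic of order $p^k$; for $k=1$ the module $V=I_S$ already has the required form, and for $k\geq 2$ a direct character computation shows $V\cong \ind^S_{P_0} I_{P_0}$, where $P_0\leq S$ is the unique subgroup of order $p$ (every faithful character of $S$ restricts to the unique non-trivial character class on $P_0$). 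If $S$ is non-abelian, I pick a complex irreducible summand $\chi$ of $V\otimes_\QQ\CC$; because $V$ is faithful and $S$ is non-abelian, $\chi$ is faithful and non-linear, and since $p$-groups are supersolvable and hence $M$-groups, $\chi=\ind^S_H\psi$ for some linear character $\psi$ of a proper subgroup $H$ of $S$. The natural candidate $W$ is the simple $\QQ H$-module obtained by inflating the faithful simple rational module of the cyclic quotient $H/\ker\psi$, whose complexification is exactly the Galois orbit of $\psi$.

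The main obstacle will be matching $\QQ$-forms: identifying $V\cong\ind^S_H W$ on the nose. Over $\CC$, $(\ind^S_H W)\otimes\CC$ is the sum $\sum_\sigma \ind^S_H(\sigma\psi)=\sum_\sigma \sigma\chi$ over the Galois orbit of $\psi$ under $\Gal(\QQ(\psi)/\QQ)$, while $V\otimes\CC$ is the Galois orbit of $\chi$ under $\Gal(\QQ(\chi)/\QQ)$ weighted by the Schur index $e$. The two sides agree precisely when $e=1$ and $\QQ(\chi)=\QQ(\psi)$. For $p$ odd, Roquette's theorem gives $e=1$, and the equality $\QQ(\chi)=\QQ(\psi)=\QQ(\zeta_p)$ can be arranged by refining the monomial decomposition so that $|H:\ker\psi|=p$; this refinement itself uses the cyclic identity $V_C\cong \ind^C_{P_0} I_{P_0}$ recursively to break up a larger cyclic quotient $H/\ker\psi$. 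The case $p=2$ requires care with the possible Schur index $e=2$, exemplified by the $4$-dimensional quaternion module of $Q_8$, which is nonetheless realized as $\ind^{Q_8}_{\langle -1\rangle} I_{\langle -1\rangle}$ and is handled by a separate direct verification. Once the $\CC$-level identification is established, the Noether--Deuring theorem descends the isomorphism to $\QQ$, finishing the proof.
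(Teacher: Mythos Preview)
The paper does not supply a proof of this theorem: it is quoted as the Ritter--Segal theorem with a pointer to \cite{Bouc}*{Section 9.2}, and the text immediately moves on to use it. There is therefore no proof in the paper for your proposal to be compared against.

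As for the proposal itself, the inductive architecture (base case, non-faithful case via inflation from $S/N$, faithful case via monomiality of $p$-groups) is the standard route, and your handling of the cyclic and $Q_8$ cases is correct. The difficulty you correctly isolate is the descent from $\CC$ to $\QQ$ in the faithful non-abelian case, and here your sketch has a genuine gap. You claim one can refine the monomial decomposition $\chi=\ind_H^S\psi$ so that $|H:\ker\psi|=p$, forcing $\QQ(\psi)=\QQ(\zeta_p)$; but this is impossible whenever $|Z(S)|>p$. Indeed $Z(S)\leq H$ (central elements have nonzero induced character value), and $\psi|_{Z(S)}$ equals the central character of $\chi$, which is faithful on $Z(S)$ since $V$ is faithful; hence $\ker\psi\cap Z(S)=1$ and $|H:\ker\psi|\geq |Z(S)|$. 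In that situation $\QQ(\psi)\supseteq\QQ(\zeta_{|Z(S)|})\supsetneq\QQ(\zeta_p)$, so the equality $\QQ(\chi)=\QQ(\psi)=\QQ(\zeta_p)$ you want cannot be arranged, and the recursive ``cyclic identity'' does not repair this since it rewrites $W$, not $\psi$.

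The usual fix is not to aim for a linear $\psi$ in one shot but to induce only one step and feed the result back into the induction on $|S|$. Concretely, choose a normal abelian subgroup $A\leq S$ with $A\not\leq Z(S)$ (take $Z(S)<A\leq Z_2(S)$ with $[A:Z(S)]=p$; such $A$ is abelian and normal). Rational Clifford theory applied to $V|_A$ shows that the inertia subgroup of a simple summand is proper in $S$, so $V\cong\ind_M^S U$ for some proper subgroup $M<S$ and some simple $\QQ M$-module $U$. Applying the induction hypothesis to $U$ and transitivity of induction finishes the proof without any character field bookkeeping. This is essentially the argument one finds in the reference the paper cites.
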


For any field $\KK$, there is a linearization map $\mathrm{Lin} _\KK \colon A(G) \to R_\KK (G)$ from the Burnside ring $A(G)$ to the representation ring of $\KK G$-modules, which is defined as the homomorphism that takes a $G$-set $X$ to the permutation $\KK G$-module $\KK X$. It follows from the Ritter-Segal theorem that the linearization map
\[\mathrm{Lin} _{\QQ} \colon A(S)  \to R_{\QQ} (S)\]
is surjective when $S$ is a $p$-group. Note this is not true in general for finite groups (see \cite{BartelDokchitser}). For fusion systems we have $p$-local versions of these theorems.

\begin{prop}\label{pro:p-localrat} Let $\cF$ be a fusion system on a $p$-group $S$. Then the dimension homomorphism $\Dim \colon R _{\QQ} (\cF)_{(p)} \to C_b (\cF)_{(p)}$ is injective, and the linearization map $\mathrm{Lin} _{\QQ} \colon A(\cF)_{(p)} \to R_{\QQ} (\cF)_{(p)}$ is surjective.
\end{prop}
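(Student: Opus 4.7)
The plan is to handle the two statements separately: injectivity will follow directly from Lemma \ref{lem:detection}, and surjectivity will follow by feeding the Ritter-Segal theorem into the biset-functor machinery of Proposition \ref{propShortExactBisetFunctors}.

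For injectivity, the key input is Lemma \ref{lem:detection}, which says two rational representations of a finite group are isomorphic iff they have the same dimension on every cyclic subgroup. Since $R_\QQ^+(S)$ is a free abelian monoid, it embeds in its Grothendieck ring $R_\QQ(S)$, and hence the dimension homomorphism $\Dim\colon R_\QQ(S)\to C(S)$ is injective: if $\Dim(V-W)=0$ then $\dim V^C=\dim W^C$ on all cyclic $C\leq S$, hence $V\cong W$ by Lemma \ref{lem:detection}, hence $V-W=0$. Since $\Dim$ factors through $C_b(S)$ (rational representations complexify to real ones with the same fixed-point dimensions, and Theorem \ref{thm:nilpotent} applies to the $p$-group $S$), restricting to $\cF$-stable elements gives an injection $R_\QQ(\cF)\hookrightarrow C_b(\cF)$. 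Since $p$-localization is an exact functor, this remains injective after inverting primes different from $p$.

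For surjectivity of $\mathrm{Lin}_\QQ$, the linearization map $\mathrm{Lin}_\QQ\colon A\to R_\QQ$ is a morphism of biset functors, because the construction $X\mapsto \QQ[X]$ intertwines with restriction, induction, inflation, deflation, and conjugation. By the Ritter-Segal theorem stated above, $\mathrm{Lin}_\QQ\colon A(P)\to R_\QQ(P)$ is surjective for every finite $p$-group $P$ (every simple $\QQ P$-module is of the form $\ind^P_Q \inf^Q_{Q/R} I_{Q/R}$, and the augmentation $I_{Q/R}$ differs from a permutation module by a multiple of the trivial module). Setting $K:=\ker(\mathrm{Lin}_\QQ)$, we obtain a short exact sequence of biset functors on the collection of $p$-groups
\[0\to K\to A\xto{\mathrm{Lin}_\QQ} R_\QQ\to 0.\]
Applying Proposition \ref{propShortExactBisetFunctors} to this sequence yields exactness of
\[0\to K(\cF)_{(p)}\to A(\cF)_{(p)}\xto{\mathrm{Lin}_\QQ} R_\QQ(\cF)_{(p)}\to 0,\]
which gives the desired surjectivity.

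Given that both the biset-functor formalism and the Ritter-Segal theorem are already available, there is no real obstacle to overcome: the proof is a clean two-part application of material already in place. The one point that must be checked by hand is that $\mathrm{Lin}_\QQ$ is in fact a natural transformation of biset functors, but this is standard since the permutation-module functor respects all bifunctorial operations.
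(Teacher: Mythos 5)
Your proof is correct and takes essentially the same route as the paper: the paper's one-line proof is to apply Proposition \ref{propShortExactBisetFunctors} to the $p$-group facts (injectivity of $\Dim$ via Lemma \ref{lem:detection}, surjectivity of $\mathrm{Lin}_\QQ$ via Ritter--Segal) stated just above, which is exactly what you do, with the details filled in. Your direct argument for injectivity (restriction of an injection, preserved by localization) is the same underlying reasoning as the injectivity half of Proposition \ref{propShortExactBisetFunctors}, so there is no real divergence.
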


\begin{proof} This follows from Proposition \ref{propShortExactBisetFunctors} once we apply it to the injectivity and surjectivity results for $p$-groups stated above.
\end{proof}

Now we are ready to prove Theorem \ref{thm:monotone}.

\begin{proof}[Proof of Theorem \ref{thm:monotone}]
Let $f\in C_b (\cF) $ be a monotone Borel-Smith function. By Dotzel-Hamrick theorem, there is a real $S$-representation $V$ such that $\Dim V=f$. Let $\chi$ denote the character for $V$ and $\LL=\QQ (\chi)$ denote the field of character values of
$\chi$. The transfer
\[\tr(\chi) = \sum_{\sigma\in \Gal(\LL / \QQ)} \lc\sigma\chi\]
gives a rational valued character. Hence there is an integer $m$ such that $m \tr(\chi)$ is the character of a rational $S$-representation $W$. Note that $\Dim W= N f$ in $C_b (\cF)$, where $N= m \deg (\LL : \QQ )$. Let $\varphi \colon Q \to S$ be a morphism in $\cF$. Note that since $f$ is $\cF$-stable, we have
\[\Dim \res _{\varphi} W =N \res _{\varphi} f= N \res _Q f=\Dim \res _Q W.\]
The dimension function $\Dim$ is injective for rational representations by Lemma \ref{lem:detection}, we obtain that $\res _{\varphi} W=\res _Q W$ for every morphism $\varphi\colon Q \to S$. Therefore $W$ is an $\cF$-stable rational representation. This completes the proof.
\end{proof}

\begin{remark} In Theorem \ref{thm:monotone}, the integer $N\geq 1$ can be chosen independent from $f$. Note in the proof that the number $m$ only depends on $S$ (by Fact \ref{fact:SchurIndex}), and $\LL$ is contained in the extension $\LL '$ of $\QQ$ by roots of unity for the maximal element order in $S$, so if we take $N=m\deg (\LL' : \QQ )$, the conclusion of the theorem will hold for every $f \in C_b(\cF)$ using this particular $N$.

Note also that Theorem \ref{thm:monotone} will still hold if $f$ is an $\cF$-stable super class function which satisfies only the condition \ref{def:BorelSmith2} of the Borel-Smith conditions since the other conditions will be automatically satisfied by a multiple of $f$. This formulation is more useful for the applications for constructing group actions.
\end{remark}

\section{Applications to constructions of group actions}
\label{sect:applications}

In this section we discuss some applications of Theorem \ref{thm:monotone} to some problems related to finite group actions on homotopy spheres.

If a finite group $G$ acts freely on a sphere $S^n$, for some $n\geq 1$, then
by P.A. Smith theory $G$ can not include $\ZZ/p \times \ZZ /p$ as a subgroup for any prime $p$. The $p$-rank $\rk _p (G)$ of a finite group $G$ is defined to be the largest integer $s$ such that $(\ZZ /p)^s \leq G$. The rank of $G$, denoted by $\rk(G)$, is the maximum of the $p$-ranks $\rk_p (G)$ over all primes $p$ dividing the order of $G$. It is known, by a theorem of Swan \cite{Swan}, that a finite group $G$ acts freely and cellularly on a finite CW-complex $X$ homotopy equivalent to a sphere if and only if $\rk (G) \leq 1$. Recently Ian Hambleton and the second author were able to prove a similar theorem for rank two finite group actions on finite complexes homotopy equivalent to spheres (see \cite{HambletonYalcinRank1Prime}*{Theorem A}).

For rank two groups the classification involves the group $\Qd(p)$ which is defined as the semidirect product
\[\Qd(p) = (\ZZ/ p \times \ZZ / p)\rtimes SL_2(p)\]
with the obvious action of $SL_2(p)$ on $\ZZ / p \times \ZZ /p$. We say $\Qd(p)$ is $p'$-\emph{involved in $G$} if there exists a subgroup $K \leq G$, of order prime to $p$, such that $N_G(K)/K$ contains a subgroup isomorphic to $\Qd(p)$. A finite group $G$ is \emph{$\Qd(p)$-free} if it does not $p'$-involve $\Qd(p)$ for any odd prime $p$.

\begin{theorem}[Hambleton-Yal\c{c}{\i}n]\label{thm:ranktwo}
Let $G$ be a finite group of rank two. If $G$ admits a finite $G$-CW-complex $X\simeq S^n$ with rank one  isotropy then $G$ is $\Qd(p)$-free. Conversely, if $G$ is $\Qd(p)$-free, then there exists a finite $G$-CW-complex $X\simeq S^n$ with rank one prime power isotropy.
\end{theorem}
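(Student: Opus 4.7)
The plan is to prove the two directions separately, with the sufficiency direction making essential use of the Borel-Smith realization result Theorem~\ref{thm:IntromainApp}.

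For necessity, suppose $X \simeq S^n$ is a finite $G$-CW-complex with all isotropy subgroups of rank at most one, and assume for contradiction that $\Qd(p)$ is $p'$-involved in $G$ via some $K \leq G$ with $(|K|,p)=1$ and $\Qd(p) \hookrightarrow N_G(K)/K$. Because $|K|$ is coprime to $p$, the fixed-point set $X^K$ is a mod-$p$ homology sphere by a standard Smith-theoretic argument, and it inherits a well-defined action of $N_G(K)/K$ and hence of $\Qd(p) = (\ZZ/p)^2 \rtimes SL_2(p)$. If $\bar g \in \Qd(p)$ stabilizes some $x \in X^K$, then any lift $g \in N_G(K)$ stabilizes $x$ in the original $G$-action; hence the preimage in $N_G(K)$ of the $\Qd(p)$-stabilizer of $x$ lies in $G_x \cdot K$, which has $p$-rank at most one since $|K|$ is prime to $p$ and $\rk(G_x) \leq 1$. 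In particular the rank-two subgroup $E = (\ZZ/p)^2 \leq \Qd(p)$ cannot stabilize any point of $X^K$, so $E$ acts freely on a mod-$p$ homology sphere, contradicting the classical Smith theorem.

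For sufficiency, the strategy is to construct a monotone Borel-Smith function $f \colon \cP \to \ZZ$ with $f(H) > 0$ precisely when $\rk(H) \leq 1$, and then to apply Theorem~\ref{thm:IntromainApp} to produce an integer $N \geq 1$ and a finite $G$-CW-complex $X \simeq S^n$ with $\Dim_\cP X = Nf$. Since a subgroup $H \in \cP$ occurs as an isotropy subgroup of $X$ exactly when $X^H \neq \emptyset$, i.e.\ when $f(H) > 0$, such an $X$ automatically has rank-one prime power isotropy. The function $f$ is built prime by prime: on each Sylow $p$-subgroup $S_p$ one constructs a Borel-Smith function $f_p \in C_b(\cF_{S_p}(G))$ satisfying Bauer's Artin condition \ref{def:ArtinCondition} and vanishing on all rank-two subgroups, and then assembles these into a single super class function on $\cP$ after rescaling to match values at the trivial subgroup.

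The main obstacle is verifying that the $\Qd(p)$-free hypothesis is precisely what permits such an $f_p$ to exist at each odd prime $p$. Borel-Smith condition \ref{def:BorelSmith2} applied at a rank-two $P \leq S_p$ with $P/L \cong (\ZZ/p)^2$ and $f_p(P) = 0$ imposes the linear relation $f_p(L) = \sum_{i=0}^{p} f_p(H_i)$ on the $p+1$ intermediate rank-one subgroups. If the fusion automorphisms $\Aut_\cF(P)$ (modulo inner automorphisms) contain an $SL_2(p)$-section acting as in $\Qd(p)$, then $\cF$-stability forces all $f_p(H_i)$ equal to a common value $a$, while Bauer's Artin condition \ref{def:ArtinCondition} applied to cyclic subgroups stabilized by this $SL_2(p)$ forces divisibility of $f_p(L) - a$ by the order of the relevant image in $\Aut(H_i/L)$; iterating these constraints rapidly forces $f_p$ to vanish on rank-one subgroups, contradicting positivity. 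Ruling out such sections is exactly the $\Qd(p)$-free hypothesis, and under this hypothesis together with $\rk(G) \leq 2$ a case analysis of the possible rank-two fusion systems shows that a consistent positive assignment of $f_p$ on $G$-conjugacy classes of rank-one subgroups exists. Once $f$ is in hand, Theorem~\ref{thm:IntromainApp} immediately delivers the desired $X$.
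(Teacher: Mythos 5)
The paper does not prove this theorem; it quotes it as Theorem~A of Hambleton--Yal\c{c}{\i}n \cite{HambletonYalcinRank1Prime} and explains that the original proof combines the gluing result (Theorem~\ref{thm:const} in the paper) with Jackson's construction of a $G$-invariant family of Sylow representations $\{V_p\}$ with $V_p^E=0$ for all rank-two elementary abelian $E$. The paper also remarks that Theorem~\ref{thm:mainApp} gives an alternative route: reduce the sufficiency direction to the \emph{existence} of a monotone Borel--Smith function $f\colon\cP\to\ZZ$ with $f(H)=0$ whenever $\rk H=2$, while noting explicitly that ``showing the existence of such a Borel--Smith function still involves quite a bit of group theory'' and runs through essentially the same case analysis as Jackson.

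Your sufficiency argument adopts that alternative route, which is the correct high-level strategy, but the paragraph that is supposed to produce $f_p$ is not a proof. You describe what kind of obstruction would arise if $\Qd(p)$ were $p'$-involved, then assert that ``a case analysis of the possible rank-two fusion systems shows that a consistent positive assignment of $f_p$ on $G$-conjugacy classes of rank-one subgroups exists.'' That case analysis \emph{is} the theorem; it is precisely the part the authors flag as substantial and parallel to Jackson's Theorem~47, and it cannot be dismissed in one sentence. Moreover you would still need to verify that the per-prime functions $f_p$, after rescaling, assemble to a globally defined monotone Borel--Smith function on $\cP$ and that it satisfies condition \ref{def:BorelSmith4}; you do not address this.

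Your necessity argument has a genuine gap at the first step. You claim that because $|K|$ is coprime to $p$, ``the fixed-point set $X^K$ is a mod-$p$ homology sphere by a standard Smith-theoretic argument.'' Smith theory produces mod-$q$ homology-sphere fixed sets for $q$-\emph{groups} acting; it gives no mod-$p$ control over $X^K$ when $K$ is a $p'$-group. The localization/transfer arguments available for $p'$-groups in $\FF_p$-coefficients concern the orbit space, not the fixed set, and indeed Oliver's examples of fixed-point-free actions of non-$p$-groups on $\FF_p$-acyclic complexes show that no such general statement holds. The rest of your necessity argument (Schur--Zassenhaus to lift $E=(\ZZ/p)^2$, the observation that a lift acting freely on $X^K$ would contradict the freeness bound for elementary abelian $p$-groups on finite-dimensional mod-$p$ homology spheres) is fine \emph{conditional on} $X^K$ being a mod-$p$ homology sphere, but that premise has to be established by some other means, and as stated the step is simply false. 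Since the paper does not reproduce the original proof of the necessity direction, I cannot tell you exactly what Hambleton--Yal\c{c}{\i}n do instead, but this particular shortcut does not work.
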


The proof of Theorem \ref{thm:ranktwo} uses a more technical gluing theorem (see Theorem \ref{thm:const} below). In this theorem the input is a collection of $G$-invariant family of Sylow representations. Let $G$ be a finite group. For every prime $p$ dividing the order of $G$, let $G_p$ be a fixed Sylow $p$-subgroup of $G$. A $G$-invariant family of representations is defined as follows.

\begin{definition}\label{def:Ginvariant}
Let $\{ V_p\} $ be a family of (complex) representations defined on Sylow $p$-subgroups $G_p$, over all primes $p$. We say the family $\{ V_p \}$ is \emph{$G$-invariant} if \begin{enumerate} \item  $V_p$ \emph{respects fusion in $G$}, i.e., the character $\chi _p$ of $V_p$ satisfies
$\chi_p (gxg^{-1})=\chi_p(x)$ whenever $gxg^{-1} \in G_p$ for some $g \in G$ and $x \in G_p$; and \item for
all $p$, $\dim V_p$ is equal to a fixed positive integer $n$. \end{enumerate}  \end{definition}

Note that if $\{ V_p\}$ is a $G$-invariant family of representations of $G$, then for each $p$, the representation $V_p$ is an $\cF$-stable representation for the fusion system $\cF=\cF _{G_p} (G)$ induced by $G$. The theorem \cite{HambletonYalcinRank1Prime}*{Theorem B} which allows us to glue such a family together is the following.

\begin{theorem}[Hambleton-Yal\c{c}{\i}n]\label{thm:const} Let $G$ be a finite group.   Suppose that $\{V_p\colon G_p \to U(n)\}$ is  a $G$-invariant family of Sylow representations. Then there exists a positive integer $k\geq 1$ and a finite $G$-CW-complex $X\simeq S^{2kn-1}$ with prime power isotropy,  such that  the $G_p$-CW-complex $\res ^G _{G_p} X$ is $p$-locally $G_p$-equivalent to $S(V_p ^{\oplus k} )$, for every prime $p \mid \abs G$,
\end{theorem}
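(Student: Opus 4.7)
The plan is to assemble the local Sylow data into a global finite $G$-CW-complex by inductive gluing over the orbit category of prime-power subgroups, using the fusion-invariance of the family $\{V_p\}$ to ensure compatibility of the local pieces and multiplying the representations by a factor $k$ to clear the Bredon cohomology obstructions that arise.

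First, I would extract a coherent global super class function from the input. Since each $V_p$ respects fusion in $G$, the function $H \mapsto \dim_{\CC} V_p^H$ on $p$-subgroups is constant on $G$-conjugacy classes in the Sylow subgroup $G_p$. Combined with the common-dimension hypothesis in Definition \ref{def:Ginvariant}(ii), these assemble into a single monotone Borel-Smith super class function $f \colon \cP \to \ZZ$, uniquely determined by $f(H) = \dim_{\CC} V_p^H$ when $H$ is a $p$-subgroup. The goal is then that $k f$ be the dimension function of the $G$-CW-sphere $X$ we are trying to build, in the sense that $X^H \simeq S^{k f(H) - 1}$ for all $H \in \cP$.

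Second, I would build $X$ cell by cell as a $G$-CW-complex with isotropy in $\cP$, so that the $H$-fixed subspace has the correct homotopy type for every $H \in \cP$. The local model at each prime $p$ is the $G_p$-sphere $S(V_p^{\oplus k})$; the fusion-invariance of $V_p$ ensures that these local models are compatible on subgroup chains that cross different primes (since any such chain can only share the trivial group, and the dimensions at the trivial group agree by hypothesis (ii)). The gluing problem then reduces to attaching equivariant cells whose attaching maps are controlled by Bredon cohomology groups with coefficients determined by the homotopy functors $\underline{\pi}_\ast S(V_p^{\oplus k})$, and $k$ is increased as needed to kill the finite-order obstructions that appear.

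The main obstacle is the joint vanishing of these Bredon cohomology obstructions across all primes simultaneously, together with the verification that the resulting $X$ is genuinely finite with only prime-power isotropy and that $\res^G_{G_p} X$ is $p$-locally $G_p$-equivalent to $S(V_p^{\oplus k})$ as $G_p$-CW-complexes, not merely matching at the level of characters or rational homotopy. This is the technical heart of \cite{HambletonYalcinRank1Prime}*{Theorem B}; my plan would follow that blueprint, iteratively clearing finite-order obstructions by enlarging $k$ and using the fusion-invariance hypothesis at each stage of the induction over the poset $\cP$.
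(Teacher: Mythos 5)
The paper contains no proof of this theorem: it is quoted verbatim from \cite{HambletonYalcinRank1Prime}*{Theorem B} of Hambleton--Yal\c{c}{\i}n and is used here as a black box (``The theorem \ldots\ which allows us to glue such a family together is the following''). So there is no internal proof to compare against, and you are effectively sketching a possible proof of the external result, which you yourself acknowledge by deferring to \cite{HambletonYalcinRank1Prime} for ``the technical heart.''

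That said, two concrete points in your sketch would not survive inspection. First, you import Borel--Smith conditions and monotonicity where they play no role: the hypothesis is only that $\{V_p\}$ is a $G$-invariant family of Sylow representations, and Theorem \ref{thm:const} is the \emph{input} to this paper's own Theorem \ref{thm:mainApp}, which is the result that converts monotone Borel--Smith functions into sphere actions. Treating the Borel--Smith machinery as an ingredient of Theorem \ref{thm:const} inverts the logical dependence. Second, ``build $X$ cell by cell, killing Bredon cohomology obstructions'' is too coarse and, taken literally, does not yield a \emph{finite} complex. The actual strategy in \cite{HambletonYalcinRank1Prime} is algebraic first: one builds a finite projective chain complex over the orbit category $\Gamma_G$ that is an $R$-homology $\underline{n}$-sphere (precisely the notion this paper recalls in Section \ref{sect:applications}), controls the Wall--Swan finiteness obstruction in $\widetilde K_0$ --- this is where the multiplier $k$ does its real work, since that obstruction lives in a finite group --- and only then realizes the algebraic complex as a genuine $G$-CW-complex. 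Your sketch never engages with the finiteness obstruction, and that is exactly the step that is hardest to make rigorous; without it one only gets a finite-dimensional complex, not a finite one. The observation that distinct-prime subgroup chains ``only share the trivial group'' is also not what needs checking: the compatibility issue is at the level of $p$-local chain complexes over $\ZZ$, handled by idempotent splittings rather than by an elementary poset argument.
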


The exact definition of $p$-local $G_p$-equivalence can be found in \cite{HambletonYalcinRank1Prime}*{Definition 3.6}. It says in particular that as $G_p$-spaces, the fixed point subspaces of $\res ^G _{G_p} X$ and $S(V_p ^{\oplus k } )$ have isomorphic $p$-local homology. From this we can conclude that for every $p$-subgroup $H \leq G$, the fixed point set $X^H$ has mod-$p$ homology of a sphere $S^{\underline{n} (H)}$, where $\underline{n}(H)=2k \dot \dim V_p ^H -1$.

Theorem \ref{thm:ranktwo} is proved by combining Theorem \ref{thm:const} with a theorem of Jackson \cite{Jackson}*{Theorem 47} which states that if $G$ is a rank two finite group which is $\Qd(p)$-free, then it has a $G$-invariant family of Sylow representations $\{ V_p\}$ such that for every elementary abelian $p$-subgroup $E$ with $\rk E=2$, we have $V_p^E = 0$.

The last condition is necessary to obtain an action on $X$ with the property that all isotropy subgroups are rank one subgroups. Note that all isotropy subgroups having rank $\leq 1$ is equivalent to saying that $X^E =\emptyset$ for every elementary abelian $p$-subgroups $E \leq G$ with $\rk E=2$. We can rephrase this condition in terms of the dimension function of a  homotopy $G$-sphere, which we describe now. Let $G$ be finite group and $\cP$ denote the collection of all subgroups of prime power order in $G$.

\begin{definition} Let $X$ be a finite (or finite dimensional) $G$-CW-complex such that $X$ is homotopy equivalent to a sphere. By Smith theory, for each $p$-group $H \leq G$, the fixed point subspace $X^H$ has mod-$p$ homology of a sphere $S^{\underline{n} (H)}$. We define $\Dim_{\cP} X \colon \cP \to \ZZ$ as the super class function with values $(\Dim _{\cP} X) (H)=\underline{n} (H)+1$ for every $p$-subgroup $H \leq G$, over all primes dividing the order of $G$.
\end{definition}

Using the results of this paper we can now prove the following theorem.

\begin{theorem}\label{thm:mainApp} Let $G$ be a finite group, and let $f\colon \cP \to \ZZ$ be a monotone Borel-Smith function. Then there is an integer $N\geq 1$ and a finite $G$-CW-complex $X \simeq S^n$, with prime power isotropy, such that $\Dim_{\cP} X=Nf$.
\end{theorem}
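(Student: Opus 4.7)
\medskip

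\noindent\textbf{Proof proposal for Theorem \ref{thm:mainApp}.} The plan is to combine the realization statement for fusion systems (Theorem \ref{thm:monotone}) with the gluing theorem of Hambleton--Yal\c{c}{\i}n (Theorem \ref{thm:const}). The idea is to produce, one prime at a time, an $\cF$-stable rational Sylow representation whose dimension function is a common multiple of $f$, bundle them into a $G$-invariant family in the sense of Definition \ref{def:Ginvariant}, and then feed that family into Theorem \ref{thm:const} to assemble a global homotopy $G$-sphere.

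First I would fix a Sylow $p$-subgroup $G_p$ for each prime $p \mid \abs{G}$ and consider the saturated fusion system $\cF_p := \cF_{G_p}(G)$. Since $f$ is a super class function on $\cP$, its restriction $f_p := f|_{\mathrm{Sub}(G_p)}$ is constant on $\cF_p$-conjugacy classes, is again a monotone Borel-Smith function, and hence lies in $C_b(\cF_p)$. Applying Theorem \ref{thm:monotone} to each $(\cF_p, f_p)$ yields an integer $N_p \geq 1$ and an $\cF_p$-stable rational $G_p$-representation $U_p$ with $\Dim U_p = N_p f_p$. To equalize dimensions across primes, set $M := \prod_{p\mid \abs{G}} N_p$ and replace $U_p$ by $W_p := U_p^{\oplus M/N_p}$, so that $\Dim W_p = M f_p$ and in particular $\dim_\QQ W_p = M f(1) =: n$ is independent of $p$.

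Next I would complexify: let $V_p := W_p \otimes_\QQ \CC$, a complex $G_p$-representation of complex dimension $n$. Since complexification preserves fixed-point dimensions and characters, the family $\{V_p\colon G_p \to U(n)\}$ still satisfies $\dim_\CC V_p^H = M f(H)$ for every $H \leq G_p$, and it respects fusion in $G$ (this is precisely the $\cF_p$-stability translated into character language, because $\cF_p = \cF_{G_p}(G)$ is generated by $G$-conjugations). Therefore $\{V_p\}$ is a $G$-invariant family in the sense of Definition \ref{def:Ginvariant}.

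Finally I would apply Theorem \ref{thm:const} to this family to obtain an integer $k \geq 1$ and a finite $G$-CW-complex $X \simeq S^{2kn-1}$ with prime power isotropy, such that $\res^G_{G_p} X$ is $p$-locally $G_p$-equivalent to $S(V_p^{\oplus k})$ for every $p$. For a $p$-subgroup $H \leq G$, this $p$-local equivalence forces $X^H$ to have the mod-$p$ homology of $S(V_p^{\oplus k})^H = S((V_p^H)^{\oplus k})$, a sphere of real dimension $2k \dim_\CC V_p^H - 1$. Hence $\underline{n}(H) = 2k M f(H) - 1$, giving $(\Dim_\cP X)(H) = 2kM f(H)$, and setting $N := 2kM$ finishes the proof. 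The main conceptual step is the second one, where one has to move from the a priori different numerical realizability constants $N_p$ produced by Theorem \ref{thm:monotone} to a single uniform multiplier; the remark after Theorem \ref{thm:monotone} (that $N_p$ may be chosen independent of $f_p$) makes this harmless, and the remaining work is essentially bookkeeping with complexification and with Theorem \ref{thm:const}.
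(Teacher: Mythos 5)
Your proof is correct and follows essentially the same route as the paper: apply Theorem~\ref{thm:monotone} prime by prime, take a common multiple of the $N_p$'s (the paper uses the least common multiple where you use the product, a harmless difference), complexify to obtain a $G$-invariant family of Sylow representations, and feed it into Theorem~\ref{thm:const}. The only substantive difference is that you spell out the bookkeeping on fixed-point dimensions under complexification and $p$-local equivalence more explicitly than the paper does.
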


\begin{proof} For each prime $p$ dividing the order of $G$, let $\cF_p =\cF _{G_p} (G)$ denote the fusion system induced by $G$ on the fixed Sylow $p$-subgroup $G_p$. By Theorem \ref{thm:monotone} there is an integer $N_p$ and an $\cF_p$-stable rational $G_p$-representation $V_p$ such that $\Dim V_p=N_p f$. Taking $N'$ as the least common multiple of $N_p$'s, over all primes dividing the order of $G$, we see that $N'f$ is realized by a $G$-invariant family of Sylow representations $\{ W_p\}$ where $W_p$ is equal to the complexification of $(N'/N_p )$-copies of $V_p$. By Theorem \ref{thm:const}, there is an integer $k\geq 1$ and  a finite $G$-CW-complex $X$, with prime power isotropy, such that $\Dim_{\cP} X=2k N' f$. This completes the proof.
\end{proof}

\begin{example}While the monotone Borel-Smith function $f$ for $\Sigma_3$ in Example \ref{example:MonotoneBSfunction} has no multiple that is realized by a real $\Sigma_3$-representation, Theorem \ref{thm:mainApp} states that a multiple of $f$ is still realized by a homotopy $\Sigma_3$-sphere -- just not coming from a representation.
\end{example}

Theorem \ref{thm:mainApp} reduces the question of finding actions on homotopy spheres with restrictions on the rank of isotropy subgroups, to finding Borel-Smith functions that satisfy certain conditions. For example, using this theorem one can prove Theorem \ref{thm:ranktwo} directly now by showing that for every rank two finite group $G$ that is $\Qd(p)$-free, there is a monotone Borel-Smith function $f\colon \cP \to \ZZ$ such that $f(H)=0$ for every $H \leq G$ with $\rk H=2$.   Showing the existence of such a Borel-Smith function still involves quite a bit group theory, and the proof we could find runs through a lot of the same subcases as Jackson in \cite{Jackson}*{Theorem 47}, but with Borel-Smith functions instead of characters.

Theorem \ref{thm:mainApp} is also related to a question of Grodal and Smith on algebraic models for homotopy $G$-spheres. In \cite{GrodalAbs}*{after Thm 2}, it was asked if a given Borel-Smith function, defined on $p$-subgroups of a finite group $G$, can be realized as the dimension function of an algebraic homotopy $G$-sphere.  We describe the necessary terminology to state this problem.

Let $G$ be a finite group and $\cH _p$ denote the family of
all $p$-subgroups in $G$. The orbit category $\Gamma_G:={\mathrm Or} _p (G)$ is defined to be the category with objects $P \in \cH_p$, whose morphisms $\Hom _{\Gamma_G } (P, Q)$ are given by $G$-maps $G/P\to G/Q$. For a commutative ring $R$, we define an $R\Gamma_G$-module as a contravariant functor $M$ from $\Gamma_G$ to the category of  $R$-modules.

A chain complex $C_*$ of $R\Gamma _G$-modules is called \emph{perfect} if it is finite dimensional with $C_i$ a finitely generated projective $R\Gamma_G$-module for each $i$. A chain complex of $R\Gamma _G$-modules is said to be an $R$-homology $\underline{n}$-sphere if for each $H \in \cH _p$, the complex $C_* (H)$ is an $R$-homology sphere of dimension $\underline{n} (H)$. The dimension function of an $R$-homology $\underline{n}$-sphere $C_*$ over $R\Gamma_G$ is defined as a function $\Dim C_* \colon \cH_p \to \ZZ$ such that $(\Dim C_*)(H)=\underline{n} (H)+1$ for all $H \in \cH_p$.

It was mentioned in \cite{GrodalAbs}, and shown in \cite{HambletonYalcinHomotopyRep} that if $C_*$ is a perfect complex which is an $\FF_p$-homology $\underline{n}$-sphere, then the dimension function $\Dim C_*$ satisfies the Borel-Smith conditions (see \cite{HambletonYalcinHomotopyRep}*{Theorem C}). Grodal and Smith \cite{GrodalAbs}*{after Thm 2} suggests that the converse also holds.

\begin{question}[Grodal-Smith]\label{ques:GrodalSmith} Let $G$ be a finite group, and let $f$ be a monotone Borel-Smith function defined on $p$-subgroups of $G$. Is there then a perfect $\FF_p \Gamma _G$-complex $C_*$ which is an $\FF _p$-homology $\underline{n}$-sphere with dimension function $\Dim C_*=f$?
\end{question}

The motivation of Grodal-Smith for studying $\FF_p$-homology $\underline n$-spheres is that they are good algebraic models for $\FF_p$-complete homotopy $G$-spheres. The main claim of \cite{GrodalAbs} is that there is a one-to-one correspondence between $\FF_p$-complete homotopy $G$-spheres and $\FF _p$-homology spheres, with obvious low dimensional restrictions, and that $\FF _p $-homology spheres are determined by their dimension functions with an additional orientation \cite{GrodalAbs}*{Thms 2 and 3}. Based on \cite{GrodalAbs} Question \ref{ques:GrodalSmith} will then play a big part of determining all possible $\FF_p$-complete $G$-spheres.

We now show that Theorem \ref{thm:mainApp} can be used to give a partial answer to this question. Given a $G$-CW-complex $X$, associated to it there is chain complex of $R\Gamma _G$-modules $C_*$ defined by taking $C_* (H)= C_* (X^H ; R)$ for every $H \in \cH_p$ with associated induced maps. If $R=\FF_p$ and if $X$ has only prime power
isotropy then this chain complex is a chain complex of projective $R\Gamma _G$-modules. This follows from the fact that for a $q$-subgroup $Q$, with $q\neq p$, the permutation group $\FF _p [G/Q]$ is a projective $\FF _p G$-module. In addition if $X$ is a finite complex, then $C_*$ is a perfect $\FF_p \Gamma _G$-complex. So we can use Theorem \ref{thm:mainApp} to prove the following.

\begin{theorem}\label{thm:AlgMainApp} Let $G$ be a finite group, and $f$ be a Borel-Smith function defined on $p$-subgroups of $G$. Then there exists an integer $N \geq 1$ and a perfect $\FF_p\Gamma _G$-complex $C_*$ which is an $\FF_p$-homology $\underline{n}$-sphere with dimension function $\Dim C_*=Nf$.
\end{theorem}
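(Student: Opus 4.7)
The plan is to reduce Theorem~\ref{thm:AlgMainApp} to Theorem~\ref{thm:mainApp} by first extending $f$ to a monotone Borel-Smith function on the full collection $\cP$ of prime-power subgroups, then extracting the desired $\FF_p\Gamma_G$-complex from the cellular chain complex of the geometric homotopy $G$-sphere produced there. Since the conclusion forces the dimension function of $C_*$ to be non-negative and decreasing along inclusions of subgroups, I shall assume $f$ is monotone, as in Question~\ref{ques:GrodalSmith}.

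First I would define $\tilde f \colon \cP \to \ZZ$ by $\tilde f(Q) := f(1)$ for every $q$-subgroup $Q$ with $q \neq p$, and $\tilde f(H) := f(H)$ for $H$ a $p$-subgroup. Within $\cP$, a $q$-subgroup is comparable under inclusion only to other $q$-subgroups or to the trivial subgroup, and $\tilde f$ is constant on each such $q$-stratum; hence the Borel-Smith conditions at any prime $r \neq p$ hold trivially (all relevant differences vanish) while those at $p$ coincide with the given conditions on $f$. Monotonicity of $\tilde f$ follows from that of $f$ together with the identity $\tilde f(Q) = f(1) \geq f(H) \geq 0$ valid for every $p$-subgroup $H$. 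Applying Theorem~\ref{thm:mainApp} to $\tilde f$ then produces an integer $N \geq 1$ and a finite $G$-CW-complex $X \simeq S^n$ with prime-power isotropy satisfying $\Dim_\cP X = N\tilde f$, so in particular $X^H$ is a mod-$p$ homology sphere of dimension $Nf(H) - 1$ for every $p$-subgroup $H$.

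Having $X$, I would form the cellular chain complex of functors $C_* \colon \Gamma_G^{\op} \to \FF_p\text{-mod}$ defined by $C_k(G/H) := C_k(X^H; \FF_p)$. Finiteness of $X$ makes this a bounded complex of finitely generated functors, and evaluation at a $p$-subgroup $H$ computes the mod-$p$ homology of $X^H$, yielding an $\FF_p$-homology $\underline n$-sphere with $\Dim C_* = Nf$. The only step requiring verification is projectivity of the chain modules: each cell $G/Q \times D^k$ of $X$ contributes the permutation functor $\FF_p[(G/Q)^?]$, which for $Q$ a $p$-subgroup is the representable $\Hom_{\Gamma_G}(-, G/Q)$ and hence projective by Yoneda. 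When $Q$ is a $q$-subgroup with $q \neq p$, the set $(G/Q)^H$ is empty unless the $p$-group $H$ is trivial, so the functor is concentrated at $H = 1$ with value the permutation module $\FF_p[G/Q]$; since $p \nmid |Q|$, this is projective over $\FF_p G$ and hence a summand of the representable $\Hom_{\Gamma_G}(-, G/1)$. This last verification, which the authors already flag in the paragraph preceding the theorem, is the main obstacle, and everything else is a routine assembly of Theorem~\ref{thm:mainApp} with the functorial chain complex.
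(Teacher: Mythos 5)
Your proof takes essentially the same route as the paper's: extend $f$ to a Borel--Smith function $\tilde f$ on all prime-power subgroups by setting $\tilde f(Q)=f(1)$ at $q$-subgroups for $q\neq p$, apply Theorem~\ref{thm:mainApp} to get a finite homotopy $G$-sphere $X$ with prime-power isotropy and $\Dim_\cP X = N\tilde f$, and then take the cellular chain functor $C_*(X^{(-)};\FF_p)$ on $\Gamma_G$. You have merely filled in two details the paper leaves implicit: the quick check that $\tilde f$ satisfies the Borel--Smith conditions at the primes $q\neq p$ (trivially, as $\tilde f$ is constant on each $q$-stratum of $\cP$), and the verification of projectivity of the chain modules via Yoneda together with the observation that for a $q$-cell ($q\neq p$) the resulting functor is concentrated at the trivial subgroup with value the projective $\FF_pG$-module $\FF_p[G/Q]$. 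You also correctly flagged that the theorem statement tacitly assumes $f$ is monotone, which is indeed required to invoke Theorem~\ref{thm:mainApp} (and is forced by the conclusion in any case), matching the framing of Question~\ref{ques:GrodalSmith}.
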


\begin{proof} Let $f'$ be a Borel-Smith function defined on subgroups of $G$ with prime power order such that $f'(H)=f(H)$ for every $p$-subgroup $H \leq G$. Such a function can be found by taking $f'(K)=f(1)$ for all $q$-subgroups $K$ with $q\neq p$. Then by Theorem \ref{thm:mainApp}, there is an $N \geq 1$ such that $Nf$ is realizable as the dimension function of a finite $G$-CW-complex $X\simeq S^n$, with prime power isotropy. The chain complex $C_*=C_* (X ^{(-)}; R)$ is a perfect $\FF_p \Gamma _G$-complex which is an $R$-homology $\underline{n}$-sphere and we have $\Dim C_*=Nf$.
\end{proof}

\makeatletter
\def\eprint#1{\@eprint#1 }
\def\@eprint #1:#2 {%
    \ifthenelse{\equal{#1}{arXiv}}%
        {\href{http://front.math.ucdavis.edu/#2}{arXiv:#2}}%
        {\href{#1:#2}{#1:#2}}%
}
\makeatother

\begin{bibdiv}
\begin{biblist}
\bibselect{bibliography}
\end{biblist}
\end{bibdiv}

\end{document}